\newtheorem{theorem}{Theorem}[section]
\newtheorem{lem}[theorem]{Lemma} 
\newtheorem{prop}[theorem]{Proposition}
\newtheorem{coro}[theorem]{Corollary} 
\theoremstyle{definition}
\newtheorem{rem}[theorem]{Remark}
\newcommand{\N}{\mathbb N}
\newcommand{\Z}{\mathbb Z}
\newcommand{\R}{\mathbb R}
\newcommand{\C}{\mathbb C}
\newcommand{\Sh}{\mathscr{S}}
\newcommand{\Ph}{\mathscr{P}}
\newcommand{\ep}{\epsilon}
\newcommand{\scal}[1]{\left\langle #1 \right\rangle} 
\newcommand{\name}{$\underline{\qquad \qquad}$} 
\newcommand{\defendproof}{\hfill $\Box$} 
\begin{document}
\title{\sc Well-posedness of nonlinear
fractional Schr\"odinger and wave equations in Sobolev spaces}
\author{\sc{Van Duong Dinh}} 
\date{ }
\maketitle

\begin{abstract}
We prove the well-posed results in sub-critical and critical cases for the pure power-type nonlinear fractional Schr\"odinger equations on $\R^d$. These results extend the previous ones in \cite{HongSire} for $\sigma \geq 2$. This covers the well-known result for the Schr\"odinger equation $\sigma =2$ given in \cite{CazenaveWeissler}. In the case $\sigma \in (0,2)\backslash \{1\}$, we give the local well-posedness in sub-critical case for all exponent $\nu>1$ in contrast of ones in \cite{HongSire}. This also generalizes the ones of \cite{ChoHwangKwonLee} when $d=1$ and of \cite{GuoHuo} when $d\geq 2$ where the authors considered the cubic fractional Schr\"odinger equation with $\sigma \in (1,2)$. We also give the global existence in energy space under some assumptions. We finally prove the local well-posedness in sub-critical and critical cases for the pure power-type nonlinear fractional wave equations. 
\end{abstract}


\section{Introduction and main results}
\setcounter{equation}{0}
Let $\sigma \in (0,\infty)\backslash \{1\}$. We consider the Cauchy fractional Schr\"odinger and wave equations posed on $\R^d, d\geq 1$, namely
\begin{align}
\left\{
\begin{array}{cll}
i\partial_t u(t,x) - \Lambda^\sigma u(t,x) &=& -\mu (|u|^{\nu-1} u)(t,x), \quad (t, x) \in \R \times \R^d, \\
u(0,x) &=& \varphi(x), \quad x\in \R^d,
\end{array}
\right.
\tag{NLFS}
\end{align}
and
\begin{align}
\left\{
\begin{array}{cll}
\partial^2_t v(t,x) + \Lambda^{2\sigma} v(t,x) &=& -\mu (|v|^{\nu-1}v)(t,x), \quad (t, x) \in \R \times \R^d, \\
v(0,x) = \varphi(x),& & \partial_tv(0,x)= \phi(x), \quad x\in \R^d,
\end{array}
\right.
\tag{NLFW}
\end{align}
where $\nu>1$ and $\mu\in \{\pm 1\}$. The operator $\Lambda^\sigma=(\sqrt{-\Delta})^\sigma$ is the Fourier multiplier by $|\xi|^\sigma$ where $\Delta= \sum_{j=1}^d \partial^2_j$ is the free Laplace operator on $\R^d$. The number $\mu=1$ (resp. $\mu=-1$) corresponds to the defocusing case (resp. focusing case). When $\sigma \in (0,2)\backslash\{1\}$, the fractional Schr\"odinger equation was discovered by N. Laskin (see \cite{Laskin2000}, \cite{Laskin2002}) owing to the extension of the Feynman path integral, from the Brownian-like to L\'evy-like quantum mechanical paths. This equation also appears in the water wave models (see \cite{IonescuPusateri}). When $\sigma \in [2,\infty)$, the (NLFS) generalizes the nonlinear Schr\"odinger equation $\sigma=2$ or the fourth order nonlinear Schr\"odinger equation $\sigma =4$ (see e.g. \cite{Pausader} and references therein). In the case $\sigma \in (0,2)\backslash\{1\}$, the fractional wave equation, introduced in \cite{ChenHolm}, reflects the L\'evy stable process and the fractional Brownian motion. In the other side, when $\sigma \in [2,\infty)$, the (NLFW) can be seem as a generalization of the fourth order nonlinear wave equation (see e.g. \cite{Pausaderwave}).  \newline
\indent It is well known (see \cite{GinibreVelo84}, \cite{Kato95}, \cite{Cazenave} or \cite{Tao}) that the (NLFS) and (NLFW) are locally well-posed in $H^\gamma$ with $\gamma \geq d/2$ provided the nonlinearity is sufficiently regular. The main purpose of this note is to prove local well-posed results for (NLFS) and (NLFW) for $\gamma \in [0,d/2)$. For the (NLFS), we extend the previous results in \cite{HongSire} for $\sigma \geq 2$. This covers the well-known result for Schr\"odinger equation $\sigma=2$ given in \cite{Cazenave}. When $\sigma \in (0,2)\backslash \{1\}$, we show local well-posedness in sub-critical case for $\nu>1$ in contrast of $\nu\geq 2$ when $d=1$, $\nu \geq 3$ when $d\geq 2$ of \cite{HongSire}. This result generalizes the ones of \cite{ChoHwangKwonLee} when $d=1$ and of \cite{GuoHuo} when $d\geq 2$ where the authors considered the cubic fractional Schr\"odinger equation with $\sigma \in (1,2)$. We also shows the global existence in energy space, namely $H^{\sigma/2}$ under some assumptions. Moreover, in critical case, we prove global existence and scattering with small homogeneous data instead of inhomogeneous one in \cite{HongSire}. To our knowledge, the (NLFW) does not seem to have been much considered before, up to \cite{Wang} on scattering operator with $\sigma$ is an even integer  and \cite{ChenFanZhang14}, \cite{ChenFanZhang15} in the context of the damped fractional wave equation. \newline
\indent Let us introduce some standard notations (see \cite{GinibreVelo85}, Appendix, \cite{Triebel} or \cite{BerghLosfstom}). Let $\chi_0 \in C^\infty_0(\R^d)$ be such that $\chi_0(\xi)=1$ for $|\xi|\leq 1$ and $\text{supp}(\chi_0) \subset \{\xi \in \R^d, |\xi|\leq 2\}$. Set $\chi(\xi):= \chi_0(\xi)-\chi_0(2\xi)$. It is easy to see that $\chi\in C^\infty_0(\R^d)$ and $\text{supp}(\chi)\subset \{\xi \in \R^d, 1/2 \leq |\xi|\leq 2 \}$. We denote the Littlewood-Paley projections by $P_0:=\chi_0(D), P_N:=\chi(N^{-1}D)$ with $N=2^k, k\in \Z$ where $\chi_0(D), \chi(N^{-1}D)$ are the Fourier multipliers by $\chi_0(\xi)$ and $\chi(N^{-1}\xi)$ respectively. Given $\gamma \in \R$ and $1 \leq q \leq \infty$, one defines the Sobolev and Besov spaces as
\begin{align*}
H^\gamma_q &:= \Big\{ u \in \Sh' \ | \  \|u\|_{H^\gamma_q}:=\|\scal{\Lambda}^\gamma u\|_{L^q} <\infty \Big\}, \quad \scal{\Lambda}:=\sqrt{1+\Lambda^2}, \\
B^\gamma_q&:= \Big\{ u \in \Sh' \ | \ \|u\|_{B^\gamma_q}:= \|P_0 u\|_{L^q} + \Big( \sum_{N\in 2^{\N}} N^{2\gamma} \|P_N u\|^2_{L^q} \Big)^{1/2} <\infty \Big\},
\end{align*}
where $\Sh'$ is the space of tempered distributions. Now let $\Sh_0$ be a subspace of the Schwartz space $\Sh$ consisting of functions $\phi$ satisfying $D^\alpha \hat{\phi}(0)=0$ for all $\alpha \in \N^d$ where $\hat{\cdot}$ is the Fourier transform on $\Sh$ and $\Sh'_0$ its topology dual space. One can see $\Sh'_0$ as $\Sh'/\Ph$ where $\Ph$ is the set of all polynomials on $\R^d$. The homogeneous Sobolev and Besov spaces are defined by
\begin{align*}
\dot{H}^\gamma_q &:= \Big\{ u \in \Sh'_0 \ | \  \|u\|_{\dot{H}^\gamma_q}:=\|\Lambda^\gamma u\|_{L^q} <\infty \Big\}, \\
\dot{B}^\gamma_q &:= \Big\{ u \in \Sh'_0 \ | \  \|u\|_{\dot{B}^\gamma_q}:=\Big( \sum_{N\in 2^{\Z}} N^{2\gamma} \|P_N u\|^2_{L^q} \Big)^{1/2} <\infty \Big\}.
\end{align*}
It is easy to see that the norm $\|u\|_{B^\gamma_q}$ and $\|u\|_{\dot{B}^\gamma_q}$ does not depend on the choice of $\chi_0$, and $\Sh_0$ is dense in $\dot{H}^\gamma_q, \dot{B}^\gamma_q$. Under these settings, $H^\gamma_q, B^\gamma_q, \dot{H}^\gamma_q$ and $\dot{B}^\gamma_q$ are Banach spaces with the norms $\|u\|_{H^\gamma_q}, \|u\|_{B^\gamma_q}, \|u\|_{\dot{H}^\gamma_q}$ and $\|u\|_{\dot{B}^\gamma_q}$ respectively (see e.g. \cite{Triebel}). In the sequel, we shall use $H^\gamma:= H^\gamma_2$, $\dot{H}^\gamma:= \dot{H}^\gamma_2$. We note (see \cite{BerghLosfstom}, \cite{GinibreVelo85}) that if $2\leq q <\infty$, then $\dot{B}^\gamma_q \subset \dot{H}^\gamma_q$. The reverse inclusion holds for $1<r\leq 2$. In particular, $\dot{B}^\gamma_2= \dot{H}^\gamma$ and $\dot{B}^0_2=\dot{H}^0_2=L^2$. Moreover, if $\gamma>0$, then $H^\gamma_q = L^q \cap \dot{H}^\gamma_q$ and $B^\gamma_q = L^q \cap \dot{B}^\gamma_q$. \newline
\indent Before stating main results, we recall some facts on (NLFS) and (NLFW). By a standard approximation argument, we see that the following quantities are conserved by the flow of (NLFS),
\[
M_{\text{s}}(u)= \int |u(t,x)|^2 dx, \quad E_{\text{s}}(u)= \int \frac{1}{2}|\Lambda^{\sigma/2} u(t,x)|^2 + \frac{\mu}{\nu+1}|u(t,x)|^{\nu+1} dx.
\]
Moreover, if we set for $\lambda>0$, 
\[
u_\lambda(t,x)= \lambda^{-\frac{\sigma}{\nu-1}} u( \lambda^{-\sigma} t, \lambda^{-1} x),
\]
then the (NLFS) is invariant under this scaling that is for $T \in (0, +\infty]$,
\[
u \text{ solves (NLFS) on } (-T, T) \Longleftrightarrow u_\lambda \text{ solves (NLFS) on } (-\lambda^{\sigma}T, \lambda^{\sigma}T).
\]
We also have
\[
\|u_\lambda(0)\|_{\dot{H}^\gamma} = \lambda^{\frac{d}{2}-\frac{\sigma}{\nu-1}- \gamma} \|\varphi\|_{\dot{H}^\gamma}. \nonumber
\]
From this, we define the critical regularity exponent for (NLFS) by
\begin{align}
\gamma_{\text{s}} =\frac{d}{2} - \frac{\sigma}{\nu-1}. \label{critical exponent schrodinger}
\end{align}
Similarly, the following energy is conserved under the flow of (NLFW),
\[
E_{\text{w}}(v)=\int\frac{1}{2} |\partial_t v (t,x)|^2 + \frac{1}{2}|\Lambda^{\sigma} v(t,x)|^2 + \frac{\mu}{\nu+1}|v(t,x)|^{\nu+1} dx,
\]
and the (NLFW) is invariant under the following scaling
\[
v_\lambda(t,x)= \lambda^{-\frac{2\sigma}{\nu-1}} v( \lambda^{-\sigma} t, \lambda^{-1} x).
\]
Using that
\begin{align}
\|v_\lambda(0)\|_{\dot{H}^\gamma} &= \lambda^{\frac{d}{2}-\frac{2\sigma}{\nu-1}- \gamma} \|\varphi\|_{\dot{H}^\gamma}, \nonumber \\
\|\partial_tv_\lambda(0)\|_{\dot{H}^{\gamma-\sigma}} &= \lambda^{\frac{d}{2}-\frac{2\sigma}{\nu-1}- \gamma} \|\phi\|_{\dot{H}^{\gamma-\sigma}}, \nonumber
\end{align}
we define the critical regularity exponent for (NLFW) by
\begin{align}
\gamma_{\text{w}} =\frac{d}{2} - \frac{2\sigma}{\nu-1}. \label{critical exponent wave}
\end{align}
By the standard argument (see e.g \cite{LinSog}), it is easy to see that the (NLFS) (resp. (NLFW)) is ill-posed if $\varphi \in \dot{H}^\gamma$ with $\gamma<\gamma_{\text{s}}$ (resp. $v_0 \in \dot{H}^\gamma, v_1 \in \dot{H}^{\gamma-\sigma}$ with $\gamma < \gamma_{\text{w}}$). Indeed if $u$ solves the (NLFS) with initial data $\varphi \in \dot{H}^\gamma$ with the lifespan $T$, then the norm $\|u_\lambda(0)\|_{\dot{H}^\gamma}$ and the lifespan of $u_\lambda$ go to zero with $\lambda$. Thus we can expect the well-posed results for (NLFS) (resp. (NLFW)) when $\gamma \geq \gamma_{\text{s}}$ (resp. $\gamma \geq \gamma_\text{w}$). \newline
\indent Throughout this note, a pair $(p,q)$ is said to be admissible if 
\begin{align}
(p,q) \in [2,\infty]^2, \quad (p,q,d) \ne (2,\infty,2), \quad \frac{2}{p}+\frac{d}{q} \leq \frac{d}{2}. \label{admissible condition}
\end{align}
We also denote for $(p,q) \in [1,\infty]^2$,
\begin{align}
\gamma_{p,q}= \frac{d}{2}-\frac{d}{q} - \frac{\sigma}{p}. \label{define gamma}
\end{align} 
Note that when $\sigma \in (0,2) \backslash \{1\}$, then $\gamma_{p,q}>0$ for all admissible pair except $(p,q)=(\infty, 2)$. Therefore, it convenient to separate two cases $\sigma \in (0,2)\backslash \{1\}$ and $\sigma \in [2,\infty)$. Our first result is the following local well-posedness for (NLFS) in sub-critical case.  
\begin{theorem} \label{theorem local wellposedness subcritical schrodinger sigma <2}
Given $\sigma \in (0,2)\backslash \{1\}$ and $\nu >1$. Let $\gamma \in [0,d/2)$ be such that
\begin{align}
\left\{
\begin{array}{ll}
\gamma > 1/2 - \sigma/\max ( \nu-1, 4) & \text{when } d=1, \\
\gamma > d/2 - \sigma/\max ( \nu-1, 2) & \text{when } d \geq 2,
\end{array}
\right. \label{condition local wellposedness subcritical schrodinger}
\end{align}
and also, if $\nu$ is not an odd integer, 
\begin{align}
\lceil\gamma \rceil \leq \nu, \label{assumption smoothness nonlinearity}
\end{align}
where $\lceil\gamma\rceil$ is the smallest positive integer greater than or equal to $\gamma$. Then for all $\varphi \in H^\gamma$, there exist $T^*\in (0,\infty]$ and a unique solution to $\emph{(NLFS)}$ satisfying 
\[
u \in C([0,T^*), H^\gamma) \cap L^p_{\emph{loc}}([0,T^*), L^\infty),
\] 
for some $p > \max (\nu-1,4)$ when $d=1$ and some $p >\max(\nu-1, 2)$ when $d\geq 2$. Moreover, the following properties hold:
\begin{itemize}
\item[\emph{i.}] If $T^*< \infty$, then $\|u(t)\|_{H^\gamma} = \infty$ as $t \rightarrow T^*$.
\item[\emph{ii.}] $u$ depends continuously on $\varphi$ in the following sense. There exists $0< T< T^*$ such that if $\varphi_n \rightarrow \varphi$ in $H^\gamma$ and if $u_n$ denotes the solution of \emph{(NLFS)} with initial data $\varphi_n$, then $0<T< T^*(\varphi_n)$ for all $n$ sufficiently large and $u_n$ is bounded in $L^a([0,T],H^{\gamma-\gamma_{a,b}}_b)$ for any admissible pair $(a,b)$ with $b<\infty$. Moreover, $u_n \rightarrow u$ in $L^a([0,T],H^{-\gamma_{a,b}}_b)$ as $n \rightarrow \infty$. In particular, $u_n \rightarrow u$ in $C([0,T],H^{\gamma-\ep})$ for all $\ep>0$.
\end{itemize}
\end{theorem}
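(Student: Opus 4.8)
The plan is to recast \emph{(NLFS)} as the Duhamel integral equation
\[
\Phi(u)(t) := e^{-it\Lambda^\sigma}\varphi + i\mu \int_0^t e^{-i(t-s)\Lambda^\sigma}\big(|u|^{\nu-1}u\big)(s)\,ds,
\]
and to solve $u=\Phi(u)$ by a fixed-point argument on a short interval $[0,T]$. I would fix an admissible pair $(p,q)$ in the sense of \eqref{admissible condition} with $q<\infty$ large and $p>\max(\nu-1,4)$ when $d=1$, $p>\max(\nu-1,2)$ when $d\geq 2$, and work in a closed ball of
\[
X(T)=C([0,T],H^\gamma)\cap L^p([0,T],\dot{H}^{\gamma-\gamma_{p,q}}_q),
\]
carrying the strong norm $\|\cdot\|_{X(T)}$ but endowed with the weaker distance $d(u,v)=\|u-v\|_{L^p([0,T],L^q)}+\|u-v\|_{L^\infty([0,T],L^2)}$, so as to bypass the lack of smoothness of the nonlinearity.

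The linear input is the pair of homogeneous and inhomogeneous Strichartz estimates for $e^{-it\Lambda^\sigma}$ underlying the admissibility \eqref{admissible condition}, which give $\|e^{-it\Lambda^\sigma}\varphi\|_{L^p([0,T],\dot{H}^{\gamma-\gamma_{p,q}}_q)}\lesssim\|\varphi\|_{\dot{H}^\gamma}$ together with the corresponding bound for the Duhamel term; combined with the energy pair $(\infty,2)$ (for which $\gamma_{p,q}$ in \eqref{define gamma} vanishes) they place $\Phi(u)$ in $C([0,T],H^\gamma)$. The restriction \eqref{condition local wellposedness subcritical schrodinger} arises because I intend to recover the $L^p_tL^\infty_x$ control — hence membership of the solution in $L^p_{\text{loc}}([0,T^*),L^\infty)$ — from the Sobolev embedding $\dot{H}^{\gamma-\gamma_{p,q}}_q\hookrightarrow L^\infty$, which requires $\gamma-\gamma_{p,q}>d/q$, i.e. $\gamma>d/2-\sigma/p$; letting $q$ increase along admissible pairs forces $p\geq 4$ if $d=1$ and $p\geq 2$ if $d\geq 2$, and the competing constraint $p>\nu-1$ then produces exactly the thresholds stated.

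The heart of the matter, and the step I expect to be the main obstacle, is the nonlinear estimate. Writing $F(u)=|u|^{\nu-1}u$, I would control its dual-Strichartz norm by first applying the fractional Leibniz and chain rules to obtain, pointwise in time, $\|\Lambda^\gamma F(u)\|_{L^{b'}}\lesssim\|u\|_{L^\infty}^{\nu-1}\|\Lambda^\gamma u\|_{L^{b_1}}$ for a suitable conjugate exponent $b'$; this is precisely where the hypothesis \eqref{assumption smoothness nonlinearity} enters, since for $\nu$ not an odd integer the map $z\mapsto|z|^{\nu-1}z$ has only $\lceil\gamma\rceil$ available derivatives and the chain rule demands $\lceil\gamma\rceil\leq\nu$. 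Hölder in time then distributes the factor $\|u\|_{L^\infty}^{\nu-1}$ into $L^{p/(\nu-1)}_t$ and the remaining factor into the relevant Strichartz time exponent; because $p>\nu-1$ there is strict room, and I can extract a positive power $T^\theta$, yielding
\[
\|\Phi(u)\|_{X(T)}\leq C\|\varphi\|_{H^\gamma}+C\,T^\theta\|u\|_{X(T)}^\nu, \qquad d(\Phi(u),\Phi(v))\leq C\,T^\theta\big(\|u\|_{X(T)}^{\nu-1}+\|v\|_{X(T)}^{\nu-1}\big)d(u,v),
\]
the difference estimate coming from $|F(u)-F(v)|\lesssim(|u|^{\nu-1}+|v|^{\nu-1})|u-v|$ measured in the weaker metric, which avoids putting derivatives on the nonlinearity. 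Taking $R\sim\|\varphi\|_{H^\gamma}$ and $T$ small makes $\Phi$ a contraction on the ball of radius $R$, whose closedness under the weaker metric (via weak compactness) gives a unique fixed point $u\in X(T)$, lying in $C([0,T],H^\gamma)\cap L^p([0,T],L^\infty)$ by the energy pair and the embedding above.

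Finally, a standard continuation argument extends $u$ to a maximal interval $[0,T^*)$ and gives the blow-up alternative \emph{(i)}: since the local existence time depends only on $\|\varphi\|_{H^\gamma}$, if $T^*<\infty$ then $\|u(t)\|_{H^\gamma}\to\infty$ as $t\to T^*$. For the continuous dependence \emph{(ii)}, I would run the same estimates on a common interval for data $\varphi_n\to\varphi$, obtaining uniform bounds on $u_n$ in every admissible space $L^a([0,T],H^{\gamma-\gamma_{a,b}}_b)$ with $b<\infty$ together with $d(u_n,u)\to 0$; interpolating the uniform high-regularity bound against this low-regularity convergence yields $u_n\to u$ in $C([0,T],H^{\gamma-\ep})$ for every $\ep>0$.
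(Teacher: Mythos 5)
Your proposal follows the paper's own proof in all essentials: the same Duhamel/contraction scheme in a Strichartz space with loss of derivatives, the same weaker metric (differences measured at the $L^2$ level) to sidestep the limited smoothness of $z\mapsto |z|^{\nu-1}z$, the same use of the fractional chain rule (Corollary \ref{coro fractional derivatives}) under \eqref{assumption smoothness nonlinearity}, the same H\"older-in-time gain $T^{1-\frac{\nu-1}{p}}$ from $p>\nu-1$, and the same interpolation argument $\|u\|_{H^{\gamma-\ep}}\leq \|u\|_{H^\gamma}^{1-\ep/\gamma}\|u\|_{L^2}^{\ep/\gamma}$ for the continuous dependence.

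However, one step is false as written: the claimed embedding $\dot{H}^{\gamma-\gamma_{p,q}}_q\hookrightarrow L^\infty$ does not hold for homogeneous spaces. If $s>d/q$, rescaling $v_\lambda=v(\lambda\,\cdot)$ gives $\|v_\lambda\|_{\dot{H}^s_q}=\lambda^{s-d/q}\|v\|_{\dot{H}^s_q}\rightarrow 0$ as $\lambda\rightarrow 0$ while $\|v_\lambda\|_{L^\infty}$ stays constant, so no inequality $\|v\|_{L^\infty}\lesssim \|v\|_{\dot{H}^s_q}$ can hold; the embedding into $L^\infty$ fails at low frequencies. This is precisely why the paper runs the fixed point in the \emph{inhomogeneous} spaces $L^p(I,H^{\gamma-\gamma_{p,q}}_q)$ (recall $H^s_q=L^q\cap \dot{H}^s_q$ for $s>0$, and $H^s_q\hookrightarrow L^\infty$ for $s>d/q$ does hold), and why it establishes the local-in-time Strichartz estimate of Corollary \ref{coro local strichartz schrodinger}, $\|u\|_{L^p(I,H^{\gamma-\gamma_{p,q}}_q)}\lesssim \|\varphi\|_{H^\gamma}+\|F\|_{L^1(I,H^\gamma)}$, whose proof uses the boundedness of $I$ to handle the low-frequency piece. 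Your argument is repairable without changing its structure, because your ball also controls $\|u\|_{L^\infty(I,H^\gamma)}$: pointwise in time one has $\|u(t)\|_{L^\infty}\lesssim \|u(t)\|_{L^2}+\|u(t)\|_{\dot{H}^{\gamma-\gamma_{p,q}}_q}$ (low frequencies by Bernstein from $L^2$, high frequencies summed from the homogeneous norm since $\gamma-\gamma_{p,q}>d/q$), which on a bounded interval yields the $L^p(I,L^\infty)$ bound; but this must be said, since the homogeneous embedding alone is the step your contraction estimates hinge on. A second, smaller omission: the theorem asserts uniqueness in the full class $C([0,T^*),H^\gamma)\cap L^p_{\text{loc}}([0,T^*),L^\infty)$, not merely uniqueness of the fixed point in your ball. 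The paper settles this in a separate step: for two solutions in that class, the $L^1_tL^2_x$ difference estimate combined with the fact that $\|u\|_{L^p([0,T],L^\infty)}\rightarrow 0$ as $T\rightarrow 0$ (dominated convergence) forces them to coincide on a short interval, and the coincidence then propagates; you should include this argument.
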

\begin{rem} \label{rem local wellposedness subcritical schrodinger}
\begin{itemize}
\item[i.] The proof of this result is based on Strichartz estimates and the fractional derivatives of nonlinear operators (see Section $\ref{section nonlinear estimates}$). Note that when $\nu$ is an odd integer, $F(\cdot)=-\mu|\cdot|^{\nu-1} \cdot \in C^\infty(\C, \C)$ (in the real sense) and when $\nu$ is not an odd integer satisfying $(\ref{assumption smoothness nonlinearity})$, $F \in C^{\lceil\gamma\rceil}(\C, \C)$. Thus we are able to apply the fractional derivatives up to order $\gamma$ (see Corollary $\ref{coro fractional derivatives}$).
\item[ii.] If we assume that $\nu>1$ is an odd integer or 
\begin{align}
\lceil \gamma \rceil \leq \nu-1 \label{assumption continuity}
\end{align}
otherwise, then the continuous dependence holds in $C([0,T],H^\gamma)$ (see Remark $\ref{rem continuity in H gamma}$). 
\end{itemize}
\end{rem}
As mentioned before, this result improves the one in \cite{HongSire} at the point that Hong-Sire only give the local well-posedness for $\nu \geq 2$ when $d =1$ and $\nu \geq 3$ when $d \geq 2$. This result also covers the ones \cite{ChoHwangKwonLee} and \cite{GuoHuo} where the authors consider the cubic fractional Schr\"odinger equation with $\sigma \in (1,2)$. When $\sigma \geq 2$, we have the following better result which generalizes the case $\sigma =2$ given in \cite{CazenaveWeissler}. 
\begin{theorem} \label{theorem local wellposedness subcritical schrodinger sigma geq 2}
Given $\sigma \geq 2$ and $\nu >1$. Let $\gamma \in [0,d/2)$ be such that $\gamma>\gamma_\emph{s}$, and also, if $\nu$ is not an odd integer, $(\ref{assumption smoothness nonlinearity})$. Let $(p,q)$ be the admissible pair defined by
\begin{align}
p=\frac{2\sigma(\nu+1)}{(\nu-1)(d-2\gamma)}, \quad q=\frac{d(\nu+1)}{d+(\nu-1)\gamma}. \label{define p q}
\end{align}
Then for all $\varphi \in H^\gamma$, there exist $T^* \in (0,\infty]$ and a unique solution to \emph{(NLFS)} satisfying
\[
u \in C([0,T^*),H^\gamma) \cap L^p_{\emph{loc}}([0,T^*), H^{\gamma}_q).
\] 
Moreover, the following properties hold:
\begin{itemize}
\item[\emph{i.}] If $T^*< \infty$, then $\|u(t)\|_{\dot{H}^\gamma} = \infty$ as $t \rightarrow T^*$.
\item[\emph{ii.}] $u$ depends continuously on $\varphi$ in the following sense. There exists $0< T< T^*$ such that if $\varphi_n \rightarrow \varphi$ in $H^\gamma$ and if $u_n$ denotes the solution of \emph{(NLFS)} with initial data $\varphi_n$, then $0<T< T^*(\varphi_n)$ for all $n$ sufficiently large and $u_n$ is bounded in $L^a([0,T],H^\gamma_b)$ for any admissible pair $(a,b)$ with $\gamma_{a,b}=0$ and $b<\infty$. Moreover, $u_n \rightarrow u$ in $L^a([0,T],L^b)$ as $n \rightarrow \infty$. In particular, $u_n \rightarrow u$ in $C([0,T],H^{\gamma-\ep})$ for all $\ep>0$.
\end{itemize}
\end{theorem}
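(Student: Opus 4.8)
The plan is to run a Cazenave--Weissler type fixed point argument on the Duhamel formulation
\[
u(t) = e^{-it\Lambda^\sigma}\varphi + i\mu \int_0^t e^{-i(t-s)\Lambda^\sigma} (|u|^{\nu-1}u)(s)\, ds =: \Phi(u)(t),
\]
with the whole scheme organized around the admissible pair $(p,q)$ of $(\ref{define p q})$. The first thing I would record is that this pair is tuned precisely so that $\gamma_{p,q}=0$: a direct computation with $(\ref{define gamma})$ gives $d/q+\sigma/p=d/2$, hence the homogeneous Strichartz estimate carries $H^\gamma$ data into $L^p_t H^\gamma_q$ with no derivative loss, and dually the inhomogeneous estimate carries an $L^{p'}_t H^\gamma_{q'}$ forcing term into $C_t H^\gamma \cap L^p_t H^\gamma_q$ (using the energy endpoint $(\infty,2)$, which is admissible by $(\ref{admissible condition})$, for the $L^\infty_t H^\gamma$ part). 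I would therefore seek the fixed point in the complete metric space
\[
X = \{ u \in L^\infty([0,T],H^\gamma)\cap L^p([0,T],H^\gamma_q) : \|u\|_{L^\infty_T H^\gamma} + \|u\|_{L^p_T H^\gamma_q} \le M \},
\]
equipped with the weaker distance $d(u,v)=\|u-v\|_{L^p_T L^q}$, and show $\Phi$ is a contraction for $M\sim\|\varphi\|_{H^\gamma}$ and $T$ small.

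The heart of the matter is the nonlinear estimate $\|\,|u|^{\nu-1}u\,\|_{L^{p'}_T H^\gamma_{q'}}\lesssim T^\theta\|u\|_{L^p_T H^\gamma_q}^\nu$ with $\theta>0$. I would get the spatial part from the fractional chain rule of Section $\ref{section nonlinear estimates}$ (Corollary $\ref{coro fractional derivatives}$), which under hypothesis $(\ref{assumption smoothness nonlinearity})$ on $F(\cdot)=-\mu|\cdot|^{\nu-1}\cdot$ yields $\|F(u)\|_{H^\gamma_{q'}}\lesssim\|u\|_{L^{q_1}}^{\nu-1}\|u\|_{H^\gamma_q}$; here the Hölder relation $\tfrac1{q'}=\tfrac{\nu-1}{q_1}+\tfrac1q$ combined with the Sobolev embedding $H^\gamma_q\hookrightarrow L^{q_1}$, $\tfrac1{q_1}=\tfrac1q-\tfrac\gamma d$, forces exactly the value of $q$ in $(\ref{define p q})$, and $0\le\gamma<d/2$ keeps $q\in(2,\infty)$ and $q_1<\infty$ so that the embedding is valid. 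This gives $\|F(u)(s)\|_{H^\gamma_{q'}}\lesssim\|u(s)\|_{H^\gamma_q}^\nu$ pointwise in $s$, and Hölder in time produces the factor $T^{1/p'-\nu/p}$; the exponent is positive exactly because $\gamma>\gamma_{\text{s}}$ forces $(\nu-1)(d-2\gamma)<2\sigma$, hence $p>\nu+1$. The difference estimate $\|F(u)-F(v)\|_{L^{p'}_T L^{q'}}\lesssim T^\theta(\|u\|^{\nu-1}_{L^p_T H^\gamma_q}+\|v\|^{\nu-1}_{L^p_T H^\gamma_q})\|u-v\|_{L^p_T L^q}$ follows the same way from $|F(u)-F(v)|\lesssim(|u|^{\nu-1}+|v|^{\nu-1})|u-v|$. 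Feeding these into the Strichartz estimates makes $\Phi$ a self-map and a contraction on $X$ for $T$ small, so Banach's theorem gives the unique solution, and $u\in C([0,T^*),H^\gamma)$ is then standard from the Strichartz bounds plus density.

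For the refinements I would argue as follows. The blow-up alternative in the homogeneous norm $\dot H^\gamma$ (item i.) is the Cazenave--Weissler sharpening: since $\gamma_{p,q}=0$, one has $\|e^{-it\Lambda^\sigma}\psi\|_{L^p(\R,\dot H^\gamma_q)}\lesssim\|\psi\|_{\dot H^\gamma}<\infty$, so this quantity on $[t_0,t_0+\tau]$ tends to $0$ with $\tau$; running the contraction with smallness drawn from this free-evolution norm rather than from $\tau$ shows the restart time at $t_0$ is bounded below in terms of $\|u(t_0)\|_{\dot H^\gamma}$ alone, while mass conservation $\|u(t_0)\|_{L^2}=\|\varphi\|_{L^2}$ bridges $\dot H^\gamma$ and the full $H^\gamma$ norm for $\gamma\ge0$. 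Hence the solution continues as long as $\|u(t)\|_{\dot H^\gamma}$ stays bounded. The continuous dependence (item ii.) I would obtain by estimating $u_n-u$ in the weaker space $L^a([0,T],L^b)$ via the difference estimate above, using the uniform bound of the $u_n$ in $L^a([0,T],H^\gamma_b)$; interpolating this convergence against the uniform $H^\gamma$ bound upgrades it to $C([0,T],H^{\gamma-\ep})$ for all $\ep>0$.

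I expect the main obstacle to be the nonlinear estimate, specifically making the fractional chain rule applicable: this is where $(\ref{assumption smoothness nonlinearity})$ is essential (so that $F\in C^{\lceil\gamma\rceil}$) and where the exact exponents in $(\ref{define p q})$ must be matched simultaneously against the Sobolev embedding, the Hölder balance, and the sub-critical time integrability $p>\nu+1$. The secondary difficulty is the one-derivative loss in the continuous-dependence statement: convergence is only in $H^{\gamma-\ep}$ in general, and recovering it in $H^\gamma$ requires the stronger hypothesis $(\ref{assumption continuity})$, as indicated in Remark $\ref{rem local wellposedness subcritical schrodinger}$.
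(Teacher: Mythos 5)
Your proposal is, in all essentials, the paper's own proof: the same admissible pair $(\ref{define p q})$ with $\gamma_{p,q}=0=\gamma_{p',q'}+\sigma$, the same Kato-style fixed point (a ball in the Strichartz norm, contraction in the weaker metric $\|u-v\|_{L^p([0,T],L^q)}$), the same nonlinear estimate via Corollary \ref{coro fractional derivatives} combined with the Sobolev embedding $H^\gamma_q \hookrightarrow L^{q_1}$ (your $q_1$ is the paper's $n$) and H\"older in time producing the subcritical factor $T^{1-\frac{(\nu-1)(d-2\gamma)}{2\sigma}}$, and the same treatment of uniqueness and of the continuous dependence in item ii. Your exponent bookkeeping is correct throughout.

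The one place you genuinely deviate is item i, and there your justification contains a flaw. The paper obtains the $\dot{H}^\gamma$ blow-up alternative for free by constraining the ball through the homogeneous norm only, namely $X=\big\{u \in L^p(I,H^\gamma_q)\ :\ \|u\|_{L^p(I,\dot{H}^\gamma_q)}\le M\big\}$ with $M=2C\|\varphi\|_{\dot{H}^\gamma}$, so that the existence time depends only on $\|\varphi\|_{\dot{H}^\gamma}$ and restarting at any $t_0$ with bounded $\|u(t_0)\|_{\dot{H}^\gamma}$ immediately contradicts $T^*<\infty$. You instead calibrate $M\sim\|\varphi\|_{H^\gamma}$, and then claim that running the contraction with smallness drawn from the free-evolution norm rather than from $\tau$ bounds the restart time from below in terms of $\|u(t_0)\|_{\dot{H}^\gamma}$ alone. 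That claim is not correct as stated: the time at which $\|e^{-i(t-t_0)\Lambda^\sigma}u(t_0)\|_{L^p([t_0,t_0+\tau],\dot{H}^\gamma_q)}$ drops below a given threshold depends on the profile of $u(t_0)$, not only on its norm --- this is precisely the critical-case mechanism, which yields blow-up of a Strichartz norm but not of the Sobolev norm. What rescues your argument is the other half of your sentence: with your $H^\gamma$-calibrated local theory, boundedness of $\|u(t_n)\|_{\dot{H}^\gamma}$ along some $t_n\to T^*<\infty$ together with conservation of mass gives boundedness of $\|u(t_n)\|_{H^\gamma}$, hence a uniform restart time and a contradiction with the (ordinary) $H^\gamma$ blow-up alternative. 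So item i does follow from your setup, but only through mass conservation, which at this regularity itself rests on the approximation argument the paper merely asserts; the cleaner route, and the paper's, is to calibrate the contraction to $\|\varphi\|_{\dot{H}^\gamma}$ from the outset, which renders both the mass-conservation bridge and the free-evolution remark unnecessary.
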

Thanks to the conservation of mass, we have immediately the following global well-posedness in $L^2$ when $\sigma \geq 2$.
\begin{coro} \label{coro global wellposedness in L2}
Let $\sigma \geq 2$ and $\nu \in (1, 1+ 2\sigma/d)$. Then for all $\varphi \in L^2$, there exists a unique global solution $u \in C(\R, L^2(\R^d))$ to \emph{(NLFS)}.
\end{coro}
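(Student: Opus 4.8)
The plan is to realise this as the endpoint case $\gamma = 0$ of Theorem \ref{theorem local wellposedness subcritical schrodinger sigma geq 2}, upgraded to a global statement by the conservation of mass $M_{\mathrm s}$. First I would check that the hypotheses of that theorem are met at $\gamma = 0$: the constraint $\gamma \in [0,d/2)$ is trivially satisfied, and the subcriticality requirement $\gamma > \gamma_{\mathrm s}$ reads $0 > d/2 - \sigma/(\nu-1)$, which is precisely $\nu < 1 + 2\sigma/d$, i.e. the stated range $\nu \in (1, 1+2\sigma/d)$. Since $\gamma = 0$ no fractional derivative of the nonlinearity is needed, so the smoothness condition (\ref{assumption smoothness nonlinearity}) is vacuous (it reduces to $\lceil 0 \rceil \le \nu$, which holds as $\nu > 1$). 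Applying the theorem with the admissible pair (\ref{define p q}), which at $\gamma = 0$ becomes $p = 2\sigma(\nu+1)/((\nu-1)d)$ and $q = \nu+1$, yields for each $\varphi \in L^2$ a maximal existence time $T^* \in (0,\infty]$ and a unique solution $u \in C([0,T^*), L^2) \cap L^p_{\mathrm{loc}}([0,T^*), L^{\nu+1})$, together with the blow-up alternative that $T^* < \infty$ forces $\|u(t)\|_{L^2} = \|u(t)\|_{\dot H^0} \to \infty$ as $t \to T^*$.

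Next I would run the standard globalization argument. The conserved mass gives $\|u(t)\|_{L^2}^2 = M_{\mathrm s}(u) = \|\varphi\|_{L^2}^2$ for all $t \in [0,T^*)$, so $\|u(t)\|_{L^2}$ stays bounded (indeed constant) on the whole interval of existence. This uniform bound is incompatible with the blow-up alternative, which would require $\|u(t)\|_{L^2}$ to diverge at a finite $T^*$; hence $T^* = +\infty$. The same reasoning applies on negative times, either by repeating the construction backwards or by invoking the time-reversal symmetry $u(t,x) \mapsto \overline{u(-t,x)}$ of (NLFS) (which sends a solution with data $\varphi$ to a solution with data $\overline{\varphi}$), so that the solution extends to all of $\R$ and lies in $C(\R, L^2)$. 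Uniqueness on each compact time interval is inherited from the local theory, and the conserved mass is consistent across the gluing.

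The main point requiring care is the rigorous justification of mass conservation at the $L^2$ level, since an $L^2$ solution is not regular enough to multiply the equation by $\bar u$ and integrate directly. I would resolve this by the approximation argument alluded to after the statement of the conservation laws: approximate $\varphi$ in $L^2$ by data $\varphi_n$ in a higher Sobolev space on which a direct energy computation gives $\|u_n(t)\|_{L^2} = \|\varphi_n\|_{L^2}$, and then pass to the limit using the continuous dependence of part ii of Theorem \ref{theorem local wellposedness subcritical schrodinger sigma geq 2}. The convergence $u_n \to u$ furnished there (in $L^a([0,T],L^b)$ and in $C([0,T], H^{-\ep})$) combined with the uniform $L^2$ bound on the $u_n$ lets one identify the limit and transfer the identity $\|u(t)\|_{L^2} = \|\varphi\|_{L^2}$; for the purpose of globalization it in fact suffices to extract from weak lower semicontinuity of the norm the one-sided bound $\|u(t)\|_{L^2} \le \|\varphi\|_{L^2}$. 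An equivalent route, which sidesteps even this, is to note that in the subcritical regime the local existence time in Theorem \ref{theorem local wellposedness subcritical schrodinger sigma geq 2} depends only on $\|\varphi\|_{L^2}$; conservation of mass then permits reapplying the local result on successive time intervals of a fixed length, thereby covering all of $\R$.
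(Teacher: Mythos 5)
Your proposal is correct and follows exactly the route the paper intends: apply Theorem \ref{theorem local wellposedness subcritical schrodinger sigma geq 2} at $\gamma=0$ (where the condition $\gamma>\gamma_{\text{s}}$ becomes $\nu<1+2\sigma/d$ and the blow-up alternative is in the $\dot{H}^0=L^2$ norm), then use conservation of mass to preclude finite-time blow-up and time-reversal to cover negative times. Your additional care in justifying mass conservation at $L^2$ regularity via approximation and the continuous-dependence statement is a welcome elaboration of what the paper dismisses as "a standard approximation argument," but it is not a departure from the paper's approach.
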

\begin{prop} \label{prop global subcritical schrodinger}
Let 
\begin{align}
\left\{
\begin{array}{ll}
\sigma \in (2/3,1) & \text{when } d=1, \\
\sigma \in (1,2) & \text{when } d=2, \\
\sigma \in (3/2, 3) & \text{when } d=3, \\
\sigma \in [2, d) & \text{when } d \geq 4,
\end{array}
\right. \label{assumption global existence}
\end{align}
and $\nu>1$ be such that $\sigma/2>\gamma_\emph{s}$, and also, if $\nu$ is not an odd integer, $\lceil\sigma/2\rceil \leq \nu$. Then for any $\varphi \in H^{\sigma/2}$, the solution to \emph{(NLFS)} given in \emph{Theorem } $\ref{theorem local wellposedness subcritical schrodinger sigma <2}$ and \emph{Theorem } $\ref{theorem local wellposedness scattering schrodinger sigma geq 2}$ can be extended to the whole $\R$ if one of the following is satisfied:
\begin{itemize}
\item[\emph{i.}] $\mu=1$.
\item[\emph{ii.}] $\mu =-1, \nu <1+2\sigma/d$.
\item[\emph{iii.}] $\mu =-1, \nu=1+2\sigma/d$ and $\|\varphi\|_{L^2}$ is small.
\item[\emph{iv.}] $\mu=-1$ and $\|\varphi\|_{H^{\sigma/2}}$ is small.
\end{itemize}
\end{prop}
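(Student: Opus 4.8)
The plan is to combine the local well-posedness with the conservation laws and the blow-up alternative. First I would verify that $\gamma=\sigma/2$ is an admissible regularity for the local theory: the hypotheses $\sigma/2>\gamma_{\text{s}}$ and $\lceil\sigma/2\rceil\le\nu$ (the latter only when $\nu$ is not an odd integer), together with the dimension-dependent restrictions $(\ref{assumption global existence})$ on $\sigma$, are designed precisely so that $(\ref{condition local wellposedness subcritical schrodinger})$ holds at $\gamma=\sigma/2$ when $\sigma\in(0,2)\setminus\{1\}$, and so that $\gamma>\gamma_{\text{s}}$ holds there when $\sigma\ge 2$. Hence Theorem~$\ref{theorem local wellposedness subcritical schrodinger sigma <2}$ (resp. Theorem~$\ref{theorem local wellposedness subcritical schrodinger sigma geq 2}$) yields a maximal solution $u\in C([0,T^*),H^{\sigma/2})$, and its blow-up alternative forces $\|u(t)\|_{H^{\sigma/2}}\to\infty$ as $t\to T^*$ whenever $T^*<\infty$. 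Since $\sigma/2>0$ gives $H^{\sigma/2}=L^2\cap\dot H^{\sigma/2}$ and mass conservation keeps $\|u(t)\|_{L^2}=\|\varphi\|_{L^2}$ fixed, it suffices to bound $\|\Lambda^{\sigma/2}u(t)\|_{L^2}$ uniformly on $[0,T^*)$ (and symmetrically on the negative maximal interval) to rule out finite-time blow-up and conclude.

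For the $\dot H^{\sigma/2}$ bound I would use energy conservation in the form
\[
\tfrac12\|\Lambda^{\sigma/2}u(t)\|_{L^2}^2=E_{\text{s}}(\varphi)-\frac{\mu}{\nu+1}\|u(t)\|_{L^{\nu+1}}^{\nu+1},
\]
noting that $E_{\text{s}}(\varphi)$ is finite because the Sobolev embedding $H^{\sigma/2}\hookrightarrow L^{\nu+1}$ holds in the energy-subcritical range $\sigma/2>\gamma_{\text{s}}$. In the defocusing case $\mu=1$ (case i) the nonlinear term is favorable and $\|\Lambda^{\sigma/2}u(t)\|_{L^2}^2\le 2E_{\text{s}}(\varphi)$ is immediate. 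In the focusing case $\mu=-1$ I would estimate $\|u\|_{L^{\nu+1}}$ by the Gagliardo--Nirenberg inequality $\|u\|_{L^{\nu+1}}\lesssim\|\Lambda^{\sigma/2}u\|_{L^2}^{\theta}\|u\|_{L^2}^{1-\theta}$ with $\theta=\tfrac{d(\nu-1)}{\sigma(\nu+1)}$, which is valid (with $\theta<1$) precisely when $\sigma/2>\gamma_{\text{s}}$; the key observation is that $\theta(\nu+1)=\tfrac{d(\nu-1)}{\sigma}$ is $<2$, $=2$, or $>2$ exactly when $\nu<1+2\sigma/d$, $\nu=1+2\sigma/d$, or $\nu>1+2\sigma/d$. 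Plugging in and using mass conservation yields
\[
\tfrac12\|\Lambda^{\sigma/2}u(t)\|_{L^2}^2\le E_{\text{s}}(\varphi)+C\,\|\varphi\|_{L^2}^{(1-\theta)(\nu+1)}\,\|\Lambda^{\sigma/2}u(t)\|_{L^2}^{\theta(\nu+1)}.
\]
In the mass-subcritical case (ii) the exponent $\theta(\nu+1)<2$ lets Young's inequality absorb the last term into the left-hand side up to an additive constant, giving a uniform bound depending only on $E_{\text{s}}(\varphi)$ and $\|\varphi\|_{L^2}$. In the mass-critical case (iii) one has $\theta(\nu+1)=2$, so the inequality becomes $\tfrac12\|\Lambda^{\sigma/2}u(t)\|_{L^2}^2\le E_{\text{s}}(\varphi)+C\|\varphi\|_{L^2}^{(1-\theta)(\nu+1)}\|\Lambda^{\sigma/2}u(t)\|_{L^2}^2$; choosing $\|\varphi\|_{L^2}$ small enough that $C\|\varphi\|_{L^2}^{(1-\theta)(\nu+1)}<\tfrac12$ absorbs the term and again produces a uniform bound.

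Case (iv) (focusing, small $H^{\sigma/2}$ data, including the mass-supercritical range $\nu\ge 1+2\sigma/d$) I would treat by a continuity argument. Combining the energy identity with $\|u\|_{L^{\nu+1}}^{\nu+1}\lesssim\|u\|_{H^{\sigma/2}}^{\nu+1}$ and mass conservation gives, for $f(t):=\|u(t)\|_{H^{\sigma/2}}^2$, an inequality of the form $f(t)\le A+Cf(t)^{(\nu+1)/2}$, where $A\lesssim\|\varphi\|_{H^{\sigma/2}}^2+\|\varphi\|_{H^{\sigma/2}}^{\nu+1}$ is small and $(\nu+1)/2>1$. Since $f$ is continuous in $t$ with $f(0)$ small, a standard barrier argument shows $f(t)$ cannot leave a fixed small neighborhood of $0$, so it stays bounded on $[0,T^*)$. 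In each case the resulting a priori bound contradicts the blow-up alternative unless $T^*=\infty$, and the same reasoning on the negative interval gives a global solution.

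The main obstacle is the critical case (iii): there the Gagliardo--Nirenberg exponent equals exactly $2$, the absorption is no longer automatic, and one must exploit the precise smallness of the conserved mass $\|\varphi\|_{L^2}$ against the sharp (dimension- and $\nu$-dependent) Gagliardo--Nirenberg constant. A secondary point requiring care is the very first step, namely confirming that the energy regularity $\gamma=\sigma/2$ genuinely lies inside the local well-posedness range; this is exactly what the dimension-dependent restrictions $(\ref{assumption global existence})$ on $\sigma$, read off against $(\ref{condition local wellposedness subcritical schrodinger})$, are there to ensure.
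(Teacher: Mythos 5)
Your proposal is correct and follows essentially the same route as the paper's proof: local well-posedness at $\gamma=\sigma/2$ (with the dimension-dependent conditions on $\sigma$ ensuring the local theory applies there), the blow-up alternative, conservation of mass and energy, and in the focusing case the Gagliardo--Nirenberg inequality with $\theta(\nu+1)=d(\nu-1)/\sigma$ compared against $2$, plus Sobolev embedding and smallness for case iv. The only difference is presentational: you spell out the Young-inequality absorption in case ii and the continuity/barrier argument in cases iii--iv, which the paper leaves implicit.
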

\indent We now turn to the local well-posedness and scattering with small data for (NLFS) in critical case. 
\begin{theorem} \label{theorem local wellposedness scattering critical schrodinger sigma <2}
Let $\sigma \in (0,2)\backslash \{1\}$ and 
\begin{align} \label{condition scattering schrodinger}
\left\{
\begin{array}{l}
\nu>5 \text{ when } d=1, \\
\nu>3 \text{ when } d\geq 2
\end{array}
\right.
\end{align}
be such that $\gamma_{\emph{s}} \geq 0$, and also, if $\nu$ is not an odd integer,
\begin{align}
\lceil\gamma_{\emph{s}}\rceil \leq \nu. \label{assumption smoothness critical}
\end{align} 
Then for all $\varphi \in H^{\gamma_{\emph{s}}}$, there exist $T^* \in (0,\infty]$ and a unique solution to $\emph{(NLFS)}$ satisfying 
\[
u \in C([0,T^*), H^{\gamma_{\emph{s}}}) \cap L^p_\emph{loc}([0,T^*),B^{\gamma_{\emph{s} }-\gamma_{p,q}}_{q}),
\] 
where $p=4, q=\infty$ when $d=1$; $2<p<\nu-1, q=p^\star=2p/(p-2)$ when $d=2$ and $p=2, q=2^\star=2d/(d-2)$ when $d\geq 3$. 
Moreover, if $\|\varphi\|_{\dot{H}^{\gamma_{\emph{s}}}} < \varepsilon$ for some $\varepsilon>0$ small enough, then $T^*=\infty$ and the solution is scattering in $H^{\gamma_{\emph{s}}}$, i.e. there exists $\varphi^+ \in H^{\gamma_\emph{s}}$ such that
\[
\lim_{t\rightarrow+\infty} \|u(t)-e^{-it\Lambda^\sigma} \varphi^+\|_{H^{\gamma_\emph{s}}} =0. 
\]
\end{theorem}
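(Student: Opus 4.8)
The plan is to solve the Duhamel formulation
\[
u(t) = e^{-it\Lambda^\sigma}\varphi - i\mu \int_0^t e^{-i(t-s)\Lambda^\sigma}\big(|u|^{\nu-1}u\big)(s)\,ds =: \Phi(u)(t)
\]
by a contraction mapping argument in a scaling-critical Strichartz space. Writing $(p,q)$ for the admissible pair prescribed in the statement, I would work in
\[
X(I) = \Big\{ u : \|u\|_{L^\infty(I,H^{\gamma_{\text{s}}})} + \|u\|_{L^p(I, B^{\gamma_{\text{s}}-\gamma_{p,q}}_q)} \leq M \Big\},
\]
equipped for the contraction with the weaker metric $d(u,v) = \|u-v\|_{L^p(I, B^{-\gamma_{p,q}}_q)}$, which avoids differentiating the nonlinearity and uses only that $F(u)=-\mu|u|^{\nu-1}u$ is locally Lipschitz. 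The two regimes are then handled uniformly: for a general datum $\varphi \in H^{\gamma_{\text{s}}}$ one chooses $I=[0,T]$ with $T$ so small that $\|e^{-it\Lambda^\sigma}\varphi\|_{L^p([0,T], B^{\gamma_{\text{s}}-\gamma_{p,q}}_q)}$ is small (possible since this norm is finite and tends to $0$ as $T\to 0$), while for the scattering statement one takes $I=\R$ and uses that, by the homogeneous Strichartz estimate, $\|e^{-it\Lambda^\sigma}\varphi\|_{L^p(\R, \dot{B}^{\gamma_{\text{s}}-\gamma_{p,q}}_q)} \lesssim \|\varphi\|_{\dot{H}^{\gamma_{\text{s}}}} < \varepsilon$ is small.

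The two estimates that drive the argument are the Strichartz estimates for the fractional propagator and the nonlinear estimate. From the homogeneous and retarded Strichartz estimates for $e^{-it\Lambda^\sigma}$ — which for $\sigma \in (0,2)$ carry the derivative loss measured by $\gamma_{p,q}$, and are hence naturally stated in Besov spaces (this is precisely why $q=\infty$ in $d=1$, and the endpoint pair $p=2$ in $d\geq 3$, are admissible) — I would obtain
\[
\|\Phi(u)\|_{L^p(I,B^{\gamma_{\text{s}}-\gamma_{p,q}}_q)} + \|\Phi(u)\|_{L^\infty(I,H^{\gamma_{\text{s}}})} \lesssim \|\varphi\|_{H^{\gamma_{\text{s}}}} + \big\||u|^{\nu-1}u\big\|_{N(I)},
\]
where $N(I)$ is a suitable dual Strichartz norm. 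The crux is to bound $\||u|^{\nu-1}u\|_{N(I)} \lesssim \|u\|_{L^p(I,B^{\gamma_{\text{s}}-\gamma_{p,q}}_q)}^\nu$: this is where the fractional chain rule of Corollary \ref{coro fractional derivatives} distributes the $\gamma_{\text{s}}$ derivatives onto the factors, after which H\"older in time and space together with the Sobolev/Besov embeddings close the estimate. The smoothness hypothesis $\lceil\gamma_{\text{s}}\rceil \leq \nu$ (when $\nu$ is not an odd integer) guarantees $F \in C^{\lceil\gamma_{\text{s}}\rceil}$ so the chain rule applies, and the lower bounds $\nu>5$ ($d=1$), $\nu>3$ ($d\geq 2$) are exactly what make the time and space exponents balance by scaling.

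With these two inequalities in hand, the standard scheme gives the result: for $M$ and the free-evolution norm small, $\Phi$ maps the ball $X(I)$ into itself and is a contraction for $d$, producing a unique fixed point $u \in C(I,H^{\gamma_{\text{s}}}) \cap L^p(I, B^{\gamma_{\text{s}}-\gamma_{p,q}}_q)$; the blow-up alternative and the continuation to a maximal interval $[0,T^*)$ follow as usual. For the scattering part, once $\|u\|_{L^p(\R,\dot{B}^{\gamma_{\text{s}}-\gamma_{p,q}}_q)}<\infty$ is established globally, I would set
\[
\varphi^+ = \varphi - i\mu\int_0^\infty e^{is\Lambda^\sigma}\big(|u|^{\nu-1}u\big)(s)\,ds,
\]
check $\varphi^+ \in H^{\gamma_{\text{s}}}$ via the inhomogeneous Strichartz estimate, and bound $\|u(t)-e^{-it\Lambda^\sigma}\varphi^+\|_{H^{\gamma_{\text{s}}}}$ by the Strichartz norm of the nonlinearity on the tail $[t,\infty)$, which vanishes as $t\to\infty$ by dominated convergence. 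I expect the main obstacle to be the critical nonlinear estimate: because the problem is scaling-invariant there is no slack from the length of $I$, so the Besov derivative loss $\gamma_{p,q}$ must be compensated exactly, the dual exponents appearing in $N(I)$ must be verified to form an admissible pair, and the fractional chain rule must be applied at the non-integer regularity $\gamma_{\text{s}}$ — this balancing of exponents, rather than the soft fixed-point machinery, is where the real work lies.
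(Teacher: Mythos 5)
Your soft framework (contraction in $L^\infty_t H^{\gamma_{\text{s}}} \cap L^p_t B^{\gamma_{\text{s}}-\gamma_{p,q}}_q$ with a weaker metric, smallness of the free evolution on a short interval or for small data, and the tail/Cauchy argument for scattering) matches the paper's proof. The gap is precisely in what you call the crux. The claimed nonlinear estimate $\||u|^{\nu-1}u\|_{N(I)} \lesssim \|u\|^{\nu}_{L^p(I,B^{\gamma_{\text{s}}-\gamma_{p,q}}_q)}$ cannot hold for any dual Strichartz norm $N(I)=L^{a'}(I,\cdot)$ once $I$ is unbounded (and for a critical problem you have no slack from $|I|$): placing all $\nu$ factors in $L^p_t$-based norms leaves the product only in $L^{p/\nu}_t$, so one would need $a'=p/\nu$, and with $p=2$ ($d\geq 3$), $p\in(2,\nu-1)$ ($d=2$), $p=4,\ \nu>5$ ($d=1$) this gives $a'<1$, which is not a Lebesgue exponent; no admissible dual pairing exists. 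Any correct estimate must spend most of the factors on the energy norm: the paper's bound, via Corollary \ref{coro fractional derivatives}, is
\begin{align*}
\|F(u)\|_{L^1(I,\dot{H}^{\gamma_{\text{s}}})} \lesssim \|u\|^{\nu-1}_{L^{\nu-1}(I,L^\infty)}\, \|u\|_{L^\infty(I,\dot{H}^{\gamma_{\text{s}}})},
\end{align*}
which reduces everything to controlling $\|u\|_{L^{\nu-1}(I,L^\infty)}$.

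This is where the second, more serious, part of the gap lies: $\|u\|_{L^{\nu-1}(I,L^\infty)}$ cannot be obtained from ``H\"older and Sobolev/Besov embeddings'' applied to the norms of $X(I)$. Since $\gamma_{\text{s}}<d/2$, no norm in play embeds into $L^\infty_x$ with the right time exponent; in fact a homogeneous Besov norm $\dot{B}^s_q$ with $s\neq d/q$ never embeds into $L^\infty$ at all (the dyadic sum diverges at either low or high frequencies). What is genuinely needed is the two-norm, frequency-by-frequency interpolation inequality of Lemma \ref{lem control L nu-1 norm} (Lemma 3.5 of Hong--Sire), e.g. for $d\geq 3$
\begin{align*}
\|u\|^{\nu-1}_{L^{\nu-1}(\R, L^\infty)} \lesssim \|u\|^2_{L^2(\R,\dot{B}^{\gamma_{\text{s}}-\gamma_{2,2^\star}}_{2^\star})}\,\|u\|^{\nu-3}_{L^\infty(\R,\dot{B}^{\gamma_{\text{s}}}_{2})},
\end{align*}
where Bernstein's inequality is applied to each Littlewood--Paley piece, the frequency splitting is optimized pointwise in time, and only then does one apply H\"older in $t$. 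This is also exactly where the hypotheses $\nu>5$ ($d=1$) and $\nu>3$ ($d\geq 2$) enter (the exponents $\nu-5$, resp. $\nu-3$, on the $L^\infty_t$ factor must be nonnegative), not a generic ``scaling balance''. The paper flags this point explicitly: Strichartz estimates alone do not suffice in the critical case for $\sigma\in(0,2)\backslash\{1\}$, and without this lemma your fixed-point scheme does not close.
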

This theorem is just a slightly modification of Theorem 1.2 and Theorem 1.3 in \cite{HongSire} where the authors proved the global existence and scattering for small inhomogeneous data. Note that Strichartz estimate is not sufficient to give the local existence in critical case. It needs a delicate estimate on $L^{\nu-1}_tL^\infty_x$ (see Lemma 3.5 in \cite{HongSire}). The range $\nu \in (1,5]$ when $d=1$ and $\nu \in (1,3]$ still remains open, and it requires another technique rather than Strichartz estimate. The situation becomes better when $\sigma \geq 2$, and we have the following result.
\begin{theorem} \label{theorem local wellposedness scattering schrodinger sigma geq 2}
Let $\sigma \geq 2$ and $\nu>1$ such that $\gamma_{\emph{s}} \geq 0$, and also, if $\nu$ is not an odd integer, $(\ref{assumption smoothness critical})$. Let 
\begin{align}
p= \nu+1, \quad q = \frac{2d(\nu+1)}{d(\nu+1)-2\sigma}. \label{define p q critical}
\end{align}
Then for any $\varphi \in H^{\gamma_\emph{s}}$, there exist $T^*\in (0,\infty]$ and a unique solution to \emph{(NLFS)} satisfying
\[
u \in C([0,T^*),H^{\gamma_\emph{s}}) \cap L^p_{\emph{loc}}([0,T^*),H^{\gamma_\emph{s}}_q).
\] 
Moreover, if $\|\varphi\|_{\dot{H}^{\gamma_{\emph{s}}}} < \varepsilon$ for some $\varepsilon>0$ small enough, then $T^*=\infty$ and the solution is scattering in $H^{\gamma_{\emph{s}}}$.
\end{theorem}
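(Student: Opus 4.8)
The plan is to run a contraction mapping argument on the Duhamel integral equation
\[
u(t) = e^{-it\Lambda^\sigma}\varphi - i\mu \int_0^t e^{-i(t-s)\Lambda^\sigma} F(u(s))\, ds, \qquad F(u)=|u|^{\nu-1}u,
\]
in a suitable space built from the critical Strichartz norm. First I would fix the exponent pair $(p,q)$ as in $(\ref{define p q critical})$ and verify that it is admissible in the sense of $(\ref{admissible condition})$ and that $\gamma_{p,q}=0$, so that the Strichartz estimate places $e^{-it\Lambda^\sigma}\varphi$ directly in $L^p_t H^{\gamma_{\mathrm{s}}}_q$ with no loss of derivatives; the choice $p=\nu+1$ is exactly the one that makes the nonlinearity self-consistent in the iteration. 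The natural working space is then
\[
X = \Big\{ u \in C([0,T],H^{\gamma_{\mathrm{s}}}) \cap L^p([0,T],H^{\gamma_{\mathrm{s}}}_q) \ \big| \ \|u\|_X \leq R \Big\},
\]
with $\|u\|_X := \|u\|_{L^\infty_t H^{\gamma_{\mathrm{s}}}} + \|u\|_{L^p_t H^{\gamma_{\mathrm{s}}}_q}$, metrized for the contraction by the weaker norm $\|u-v\|_{L^\infty_t L^2 \cap L^p_t L^q}$ (no derivatives), which is what the continuous-dependence and uniqueness statements require.

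The key analytic input is the nonlinear estimate: applying $\langle\Lambda\rangle^{\gamma_{\mathrm{s}}}$ (or $\Lambda^{\gamma_{\mathrm{s}}}$ for the homogeneous scattering norm) to $F(u)$ and using the fractional chain/Leibniz rule from Section~\ref{section nonlinear estimates} — this is where the smoothness hypothesis $(\ref{assumption smoothness critical})$, $\lceil\gamma_{\mathrm{s}}\rceil\leq\nu$, enters to guarantee $F\in C^{\lceil\gamma_{\mathrm{s}}\rceil}$ — I would bound $\||\Lambda|^{\gamma_{\mathrm{s}}}F(u)\|_{L^{q'}}$ by $\|u\|_{L^q}^{\nu-1}\,\||\Lambda|^{\gamma_{\mathrm{s}}}u\|_{L^q}$ after a Hölder split in the $\xi$-integration. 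Taking $L^{p'}_t$ in time and applying Hölder in $t$ converts $\|u\|_{L^q}^{\nu-1}$ into $\|u\|_{L^p_t L^q}^{\nu-1}$ precisely because $p=\nu+1$ gives $(\nu-1)/p + 1/p = 1/p'$; the inhomogeneous Strichartz (dual) estimate then returns the Duhamel term to $L^p_t H^{\gamma_{\mathrm{s}}}_q$, yielding a closed bound $\|\Phi(u)\|_X \lesssim \|\varphi\|_{H^{\gamma_{\mathrm{s}}}} + T^{\theta}\|u\|_X^{\nu}$ in the local case and, for the scattering part, a bound with no positive power of $T$ at all. The main obstacle is this nonlinear estimate at the critical regularity: one cannot absorb a factor $T^\theta$ with $\theta>0$ in the critical scaling, so for local existence with large data I would instead gain smallness from the tail of $e^{-it\Lambda^\sigma}\varphi$ in $L^p_t H^{\gamma_{\mathrm{s}}}_q$ on a short interval (density of Schwartz data plus dominated convergence), rather than from a power of the time length.

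With the estimates in hand the remaining steps are routine. A standard fixed-point argument in $(X,d)$ gives existence and uniqueness on $[0,T]$; the difference estimate $\|F(u)-F(v)\| \lesssim (\|u\|^{\nu-1}+\|v\|^{\nu-1})\|u-v\|$ furnishes both the contraction and the continuous dependence, and a continuation/blow-up alternative extends the solution to a maximal $T^*$. For the global and scattering claims I would take $\|\varphi\|_{\dot H^{\gamma_{\mathrm{s}}}}<\varepsilon$ small, so that the homogeneous Strichartz norm $\|e^{-it\Lambda^\sigma}\varphi\|_{L^p_t \dot H^{\gamma_{\mathrm{s}}}_q}$ is small on all of $\R_+$; the $T$-independent nonlinear bound then closes the iteration globally, forcing $T^*=\infty$ and giving a finite global Strichartz norm. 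Scattering follows by showing that $e^{it\Lambda^\sigma}u(t)$ is Cauchy in $H^{\gamma_{\mathrm{s}}}$ as $t\to+\infty$: writing $\varphi^+ := \varphi - i\mu\int_0^\infty e^{is\Lambda^\sigma}F(u(s))\,ds$, the convergence of this improper integral in $H^{\gamma_{\mathrm{s}}}$ is exactly the finiteness of the global nonlinear Strichartz norm established above.
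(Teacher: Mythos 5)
Your overall architecture matches the paper's: fix the admissible pair $(p,q)$ of $(\ref{define p q critical})$ with $\gamma_{p,q}=0$, run a contraction in the critical Strichartz space with smallness coming either from small data or from the tail of $e^{-it\Lambda^\sigma}\varphi$ via dominated convergence, and get scattering from the Cauchy property of $e^{it\Lambda^\sigma}u(t)$. However, your key nonlinear estimate is false as stated, and it is exactly the step where the critical regularity has to be used. You claim
\[
\|\Lambda^{\gamma_{\text{s}}}F(u)\|_{L^{q'}} \lesssim \|u\|_{L^q}^{\nu-1}\,\|\Lambda^{\gamma_{\text{s}}}u\|_{L^q}.
\]
The fractional chain rule (Corollary $\ref{coro fractional derivatives}$) requires the H\"older relation $\frac{1}{q'}=\frac{\nu-1}{m}+\frac{1}{q}$, where $L^m$ carries the undifferentiated factors; taking $m=q$ forces $q=\nu+1$, which for the pair $(\ref{define p q critical})$ happens precisely when $\gamma_{\text{s}}=0$ (the mass-critical case). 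For $\gamma_{\text{s}}>0$ your inequality is scaling-inconsistent: replacing $u$ by $u(\cdot/\lambda)$ multiplies the left-hand side by $\lambda^{d/q'-\gamma_{\text{s}}}$ and the right-hand side by $\lambda^{\nu d/q-\gamma_{\text{s}}}$, and $d/q'\neq \nu d/q$ unless $q=\nu+1$. So the iteration does not close with the estimate in the form you wrote; your time-H\"older bookkeeping $\frac{\nu-1}{p}+\frac{1}{p}=\frac{1}{p'}$ is correct, but you transplanted the same collapse of exponents to the space variable, where it fails.

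The repair is exactly what the paper does, and it is the only place where the regularity $\gamma_{\text{s}}$ actually enters: choose $n$ by $\frac{1}{q'}=\frac{1}{q}+\frac{\nu-1}{n}$, equivalently $n=\frac{dq}{d-\gamma_{\text{s}}q}$, apply Corollary $\ref{coro fractional derivatives}$ with the undifferentiated factors in $L^n$, and then use the Sobolev embedding $\dot{H}^{\gamma_{\text{s}}}_q \subset L^n$ to obtain
\[
\|F(u)\|_{L^{p'}(I,\dot{H}^{\gamma_{\text{s}}}_{q'})}\lesssim \|u\|^{\nu}_{L^p(I,\dot{H}^{\gamma_{\text{s}}}_q)},
\]
and the analogous difference estimate with $\|u-v\|_{L^p(I,L^q)}$ on the right. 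With this replacement the rest of your argument---the fixed point, uniqueness, recovery of continuity in $H^{\gamma_{\text{s}}}$, global existence for $\|\varphi\|_{\dot{H}^{\gamma_{\text{s}}}}<\varepsilon$, and scattering via the adjoint homogeneous Strichartz estimate applied to $\Lambda^{\gamma_{\text{s}}}F(u)$ and $F(u)$ on $[t_1,t_2]$---goes through as in the paper.
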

We now give the local well-posed results for the (NLFW) equation. Let us start with the local well-posedness in sub-critical case.
\begin{theorem} \label{theorem local wellposedness subcritical wave}
Given $\sigma \in (0,\infty)\backslash \{1\}$ and $\nu >1$. Let $\gamma \in [0,d/2)$ be as in $(\ref{condition local wellposedness subcritical schrodinger})$ and also, if $\nu$ is not an odd integer, $(\ref{assumption smoothness nonlinearity})$.
Then for all $(\varphi,\phi) \in H^\gamma\times H^{\gamma-\sigma}$, there exist $T^* \in (0,\infty]$ and a unique solution to \emph{(NLFW)} satisfying 
\[
v \in C([0,T^*), H^\gamma) \cap C^1([0,T^*), H^{\gamma-\sigma}) \cap L^p_{\emph{loc}}([0,T^*), L^\infty),
\] 
for some $p > \max (\nu-1,4)$ when $d=1$ and some $p >\max(\nu-1, 2)$ when $d\geq 2$. Moreover, the following properties hold:
\begin{itemize}
\item[\emph{i.}] If $T^*<\infty$, then $\|[v](t)\|_{H^\gamma} =\infty$ as $t\rightarrow T^*$.
\item[\emph{ii.}] $v$ depends continuously on $(\varphi, \phi)$ in the following sense. There exists $0< T<T^*$ such that if $(\varphi_n,\phi_n) \rightarrow (\varphi,\phi)$ in $H^\gamma \times H^{\gamma-\sigma}$ and if $v_n$ denotes the solution of \emph{(NLFW)} with initial data $(\varphi_n, \phi_n)$, then $0<T< T^*(\varphi_n, \phi_n)$ for all $n$ sufficiently large and $v_n$ is bounded in $L^a([0,T],H^{\gamma-\gamma_{a,b}}_b)$ for any admissible pair $(a,b)$ with $b<\infty$. Moreover, $v_n \rightarrow v$ in $L^a(I,H^{-\gamma_{a,b}}_b)$ as $n\rightarrow \infty$. In particular, $v_n\rightarrow v$ in $C([0,T],H^{\gamma-\ep}) \cap C^1([0,T],H^{\gamma-\sigma-\ep})$ for all $\ep>0$.  
\end{itemize}
\end{theorem}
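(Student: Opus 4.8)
The plan is to run the same Strichartz-plus-contraction scheme used for the fractional Schr\"odinger equation in Theorem \ref{theorem local wellposedness subcritical schrodinger sigma <2}, exploiting that the two problems share the same underlying dispersion. First I would rewrite (NLFW) in Duhamel form using the fractional wave propagators: writing $F(z)=|z|^{\nu-1}z$, a solution satisfies
\begin{align*}
v(t) &= \cos(t\Lambda^\sigma)\varphi + \frac{\sin(t\Lambda^\sigma)}{\Lambda^\sigma}\phi - \mu\int_0^t \frac{\sin((t-s)\Lambda^\sigma)}{\Lambda^\sigma} F(v(s))\,ds.
\end{align*}
Since $\cos(t\Lambda^\sigma)$ and $\sin(t\Lambda^\sigma)$ are linear combinations of the half-wave groups $e^{\pm it\Lambda^\sigma}$, which carry exactly the dispersive and Strichartz estimates already used for $e^{-it\Lambda^\sigma}$, the linear and inhomogeneous pieces obey the same Strichartz inequalities. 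The only structural difference is the factor $\Lambda^{-\sigma}$ multiplying both the datum $\phi$ and the Duhamel integrand: it gains $\sigma$ derivatives, which is precisely why $\phi$ is taken in $H^{\gamma-\sigma}$ and why it suffices to control the nonlinearity $F(v)$ at the level $H^{\gamma-\sigma}$ rather than $H^\gamma$.

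Next I would set up the contraction. Following the Schr\"odinger argument, for a short time $T$ and a large radius $M$ I would work in the complete metric space of $v \in C([0,T],H^\gamma) \cap L^p([0,T],L^\infty)$ with the relevant Strichartz norms bounded by $M$, equipped with the weaker $L^a_tH^{-\gamma_{a,b}}_b$ distance to avoid derivative loss in the Lipschitz estimate. Applying the Strichartz estimates to the map above bounds the linear part by $\|\varphi\|_{H^\gamma}+\|\phi\|_{H^{\gamma-\sigma}}$, while the nonlinear part is controlled by estimating $F(v)$ in a dual-Strichartz norm of $H^{\gamma-\sigma}$ type via the fractional Leibniz and chain rules of Section \ref{section nonlinear estimates}; the hypothesis $\lceil\gamma\rceil \leq \nu$ (when $\nu$ is not an odd integer) guarantees $F \in C^{\lceil\gamma\rceil}$, so that fractional derivatives up to order $\gamma$ of $F(v)$ may be taken, exactly as in Corollary \ref{coro fractional derivatives}. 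The admissibility range $(\ref{condition local wellposedness subcritical schrodinger})$ and the stated range of $p$ are what force the time exponents produced by H\"older's inequality in $t$ to be positive, so that a factor $T^\theta$ with $\theta>0$ appears; choosing $T$ small then yields a contraction and hence a unique fixed point.

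With the fixed point in hand, the continuity $v\in C([0,T],H^\gamma)$ and the velocity regularity $\partial_t v \in C([0,T],H^{\gamma-\sigma})$ follow by differentiating Duhamel's formula, which replaces $\Lambda^{-\sigma}\sin((t-s)\Lambda^\sigma)$ by $\cos((t-s)\Lambda^\sigma)$ and loses the $\sigma$ gain; this is harmless precisely because the target space for $\partial_t v$ is lowered by $\sigma$ as well. The blowup alternative comes from a standard maximality argument: if $T^*<\infty$ while $\|[v](t)\|_{H^\gamma}$ stayed bounded, the local existence time would be bounded below uniformly, allowing continuation past $T^*$. Finally, continuous dependence is obtained as in Theorem \ref{theorem local wellposedness subcritical schrodinger sigma <2}, by comparing two solutions in the weaker metric, where the nonlinear difference is Lipschitz with no loss of derivatives; boundedness in the stronger norms then upgrades this to convergence in $C([0,T],H^{\gamma-\ep})\cap C^1([0,T],H^{\gamma-\sigma-\ep})$ by interpolation.

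The main obstacle will be the nonlinear estimate for non-integer $\gamma$ and non-odd $\nu$: one must control fractional derivatives of the non-smooth map $F(z)=|z|^{\nu-1}z$ at order $\gamma$ (equivalently, $F(v)$ in an $H^{\gamma-\sigma}$-type norm) using the fractional chain rule, and check that the resulting H\"older exponents in both space and time are compatible with an admissible Strichartz pair under $(\ref{condition local wellposedness subcritical schrodinger})$. Compared with the Schr\"odinger case this is essentially the same computation; the only genuine bookkeeping is to verify that the $\Lambda^{-\sigma}$ gain in the propagator makes the two components $(v,\partial_t v)$ close simultaneously at the regularities $(\gamma,\gamma-\sigma)$ with the same $p$ and the same admissible pairs.
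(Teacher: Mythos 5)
Your proposal is correct and follows essentially the same route as the paper: Duhamel's formula with the propagators $\cos(t\Lambda^\sigma)$, $\Lambda^{-\sigma}\sin(t\Lambda^\sigma)$ reduced to the half-wave groups $e^{\pm it\Lambda^\sigma}$, the local Strichartz estimates of Corollary \ref{coro local strichartz wave}, the fractional derivative bounds of Corollary \ref{coro fractional derivatives} combined with the Sobolev embedding $H^{\gamma-\gamma_{p,q}}_q \subset L^\infty$ and H\"older in time to produce the factor $T^{1-\frac{\nu-1}{p}}$, a contraction in a ball of $C(I,H^\gamma)\cap C^1(I,H^{\gamma-\sigma})\cap L^p(I,H^{\gamma-\gamma_{p,q}}_q)$ under the weaker $L^\infty(I,L^2)\cap L^p(I,H^{-\gamma_{p,q}}_q)$ metric, and the same blowup-alternative and interpolation arguments for items i and ii. The only cosmetic difference is bookkeeping: the paper simply bounds $\|F(v)\|_{L^1(I,H^{\gamma-\sigma})}\lesssim\|F(v)\|_{L^1(I,H^{\gamma})}$ and reuses the Schr\"odinger-case nonlinear estimates verbatim, which is exactly the simplification your $\Lambda^{-\sigma}$-gain observation permits.
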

We note that $(\ref{condition local wellposedness subcritical schrodinger})$ is necessary to use the Sobolev embedding, but it produces a gap between $\gamma_{\text{w}}$ and $1/2-\sigma/\max ( \nu-1, 4)$ when $d=1$ and $d/2-\sigma/\max ( \nu-1, 2)$ when $d\geq 2$. Moreover, if we assume that $\nu>1$ is an odd integer or $(\ref{assumption continuity})$ otherwise, then the continuous dependence holds in $C([0,T],H^\gamma)\cap C^1([0,T],H^{\gamma-\sigma})$. \newline
\indent The following result gives the local well-posedness for (NLFW) in $\sigma$-sub-critical case.
\begin{theorem} \label{theorem local wellposedness wave H sigma subcritical}
1. Assume for $d=1,2,3,4$,
\begin{multline}
\sigma \in \Big(0,\frac{d}{d+2}\Big),\  \nu\in \Big(\frac{d}{d-2\sigma}, \frac{2d-d\sigma}{2d-(d+4)\sigma}\Big] \\ 
\text{ or } \sigma \in \Big[\frac{d}{d+2}, \frac{d}{2}\Big) \backslash \{1\}, \  \nu \in \Big(\frac{d}{d-2\sigma}, \frac{d+2\sigma}{d-2\sigma}\Big); \label{assumption sigma <2 subcritical 1}
\end{multline}
for $d=5,...,11$,
\begin{multline}
\sigma \in \Big(0,\frac{2}{3}\Big), \ \nu\in \Big(\frac{d}{d-2\sigma}, \frac{2d-d\sigma}{2d-(d+4)\sigma}\Big] \text{ or } \sigma \in \Big[\frac{2}{3}, \frac{d}{6}\Big) \backslash\{1\},\  \nu \in \Big(\frac{d}{d-2\sigma}, \frac{d}{d-3\sigma}\Big] \\
\text{ or } \sigma  \in \Big[\frac{d}{6},2\Big)\backslash\{1\}, \  \nu \in \Big(\frac{d}{d-2\sigma}, \frac{d+2\sigma}{d-2\sigma}\Big); \label{assumption sigma <2 subcritical 2}
\end{multline}
and for $d\geq 12$,
\begin{multline}
\sigma \in \Big(0,\frac{2}{3}\Big), \ \nu \in \Big(\frac{d}{d-2\sigma}, \frac{2d-d\sigma}{2d-(d+4)\sigma}\Big] \text{ or } \sigma \in \Big[\frac{2}{3}, 2\Big) \backslash\{1\}, \ \nu \in \Big(\frac{d}{d-2\sigma}, \frac{d}{d-3\sigma}\Big]. \label{assumption sigma <2 subcritical 3}
\end{multline}
Let $(p,q)$ be an admissible pair defined by
\begin{align}
p=\frac{2\sigma\nu}{(d-2\sigma)\nu-d}, \quad q=2\nu. \label{define p q sigma subcritical wave sigma <2}
\end{align}
Then for all $(\varphi, \phi) \in \dot{H}^\sigma \times L^2$, there exist $T^* \in (0,\infty]$ and a unique solution to \emph{(NLFW)} satisfying 
\[
v \in C([0,T^*),\dot{H}^\sigma) \cap C^1([0,T^*),L^2) \cap L^p_\emph{loc}([0,T^*),L^q).
\] 
Moreover, the following properties hold:
\begin{itemize}
\item[\emph{i.}] If $T^*<\infty$, then $\|[v](t)\|_{\dot{H}^\sigma} =\infty$ as $t\rightarrow T^*$. 
\item[\emph{ii.}] $v$ depends continuously on $(\varphi, \phi)$ in the following sense. There exists $0< T<T^*$ such that if $(\varphi_n,\phi_n) \rightarrow (\varphi,\phi)$ in $\dot{H}^\sigma \times L^2$ and if $v_n$ denotes the solution of \emph{(NLFW)} with initial data $(\varphi_n, \phi_n)$, then $0<T< T^*(\varphi_n, \phi_n)$ for all $n$ sufficiently large and $v_n\rightarrow v$ in $C([0,T],\dot{H}^{\sigma}) \cap C^1([0,T],L^2)$.  
\end{itemize}
\indent 2. Let 
\begin{align}
\sigma \in \Big[2,\frac{d}{2}\Big), \quad \nu \in \Big[\frac{d\sigma^*}{d+\sigma} , \sigma^*\Big), \label{assumption sigma subcritical sigma geq 2}
\end{align}
where $\sigma^*:= (d+2\sigma)/(d-2\sigma)$. Let $(p,q)$ be an admissible pair defined by
\begin{align}
p=2\sigma^*, \quad p=\frac{2d\sigma^*}{d+\sigma}. \label{define p q sigma subcritical wave sigma geq 2}
\end{align}
Then the same conclusion as in \emph{Item 1} holds true.
\end{theorem}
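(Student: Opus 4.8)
The plan is to recast \emph{(NLFW)} as a fixed point problem through Duhamel's formula and to solve it by contraction in a Strichartz space tailored to the energy regularity $\dot H^\sigma\times L^2$. Writing $K(t)=\cos(t\Lambda^\sigma)$ and $\dot K(t)=\Lambda^{-\sigma}\sin(t\Lambda^\sigma)$, any solution satisfies
\[
v(t)=K(t)\varphi+\dot K(t)\phi-\mu\int_0^t\dot K(t-s)F(v(s))\,ds,\qquad F(v)=|v|^{\nu-1}v.
\]
I would run the contraction on
\[
X_{T,R}=\Big\{v\in C([0,T],\dot H^\sigma)\cap C^1([0,T],L^2)\cap L^p([0,T],L^q): \|v\|_{L^\infty_t\dot H^\sigma}+\|\partial_t v\|_{L^\infty_t L^2}+\|v\|_{L^p_tL^q}\le R\Big\},
\]
with the complete metric $d(v,w)=\|v-w\|_{L^p_tL^q}$, choosing $R\sim\|\varphi\|_{\dot H^\sigma}+\|\phi\|_{L^2}$ and $T$ small.

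The decisive linear input is the Strichartz estimate for the half-wave propagators $e^{\pm it\Lambda^\sigma}$, in the homogeneous form $\|e^{\pm it\Lambda^\sigma}f\|_{L^p_t\dot B^{-\gamma_{p,q}}_q}\lesssim\|f\|_{L^2}$ and its inhomogeneous companion. The arithmetic fact I would check first is that for the pairs in \eqref{define p q sigma subcritical wave sigma <2} and \eqref{define p q sigma subcritical wave sigma geq 2} one has $\gamma_{p,q}=\sigma$. Hence $\Lambda^\sigma K(t)\varphi$ and $\Lambda^\sigma\dot K(t)\phi$ carry $L^2$ data, and since $\dot B^0_q\subset L^q$ for $2\le q<\infty$ the linear part obeys $\|K(\cdot)\varphi\|_{L^p_tL^q}\lesssim\|\varphi\|_{\dot H^\sigma}$ and $\|\dot K(\cdot)\phi\|_{L^p_tL^q}\lesssim\|\phi\|_{L^2}$, the $C_t\dot H^\sigma\cap C^1_tL^2$ bounds for $K(\cdot)\varphi+\dot K(\cdot)\phi$ being the standard energy identities.

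Because the solution is measured only in the Lebesgue space $L^q$, the nonlinearity can be controlled by H\"older's inequality alone, with no fractional chain rule, which is why no smoothness hypothesis beyond $\nu>1$ is needed here. The factor $\Lambda^{-\sigma}$ inside $\dot K$ exactly offsets the derivatives counted by $\dot H^\sigma$, so the Duhamel term $w(t)=\int_0^t\dot K(t-s)F(v(s))\,ds$ may be estimated by the inhomogeneous Strichartz estimate with output pairs $(\infty,2)$ and $(p,q)$ and a dual admissible input pair $(\tilde p,\tilde q)$ of vanishing regularity $\gamma_{\tilde p,\tilde q}=0$:
\[
\|w\|_{L^\infty_t\dot H^\sigma}+\|w\|_{L^p_tL^q}\lesssim\|F(v)\|_{L^{\tilde p'}_t\dot B^0_{\tilde q'}}\lesssim\|F(v)\|_{L^{\tilde p'}_t L^{\tilde q'}}.
\]
Matching the space exponents by $\nu\tilde q'=q$ gives $\|F(v)(s)\|_{L^{\tilde q'}}=\|v(s)\|_{L^q}^\nu$, and H\"older in time yields $\lesssim T^\theta\|v\|_{L^p_tL^q}^\nu$ with $\theta>0$ exactly when $p>\nu$, i.e. when $\nu<\sigma^*$; this is the analytic content of sub-criticality. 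Here the dichotomy between the two parts appears: for $\sigma<2$ the only admissible pair with $\gamma_{\tilde p,\tilde q}=0$ is the energy pair $(\infty,2)$, forcing $\tilde q'=2$ and hence the choice $q=2\nu$, whereas for $\sigma\ge2$ admissibility allows the whole segment $\tfrac d{\tilde q}+\tfrac\sigma{\tilde p}=\tfrac d2$ with $\tilde p<\infty$, permitting the more flexible pair of \eqref{define p q sigma subcritical wave sigma geq 2}. The difference estimate is identical, using $\big||a|^{\nu-1}a-|b|^{\nu-1}b\big|\lesssim(|a|^{\nu-1}+|b|^{\nu-1})|a-b|$ together with H\"older.

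With these bounds the solution map sends $X_{T,R}$ into itself and is a contraction for $T$ small, giving a unique local solution; the usual continuation argument produces the maximal interval $[0,T^*)$ together with the blow-up alternative \emph{(i)}, and the Lipschitz estimates give the continuous dependence \emph{(ii)}, the $C_t\dot H^\sigma\cap C^1_t L^2$ convergence being recovered from the strong continuity of $K,\dot K$ and dominated convergence. I expect the genuine obstacle to be not the functional-analytic scheme above but the verification, case by case, that $(p,q)$ and its dual are admissible in the sense of \eqref{admissible condition} across each range of $(d,\sigma,\nu)$: the lower bound $\nu>d/(d-2\sigma)$ keeps $p$ positive, the constraint $\tfrac2p+\tfrac dq\le\tfrac d2$ produces the competing upper thresholds $\tfrac{2d-d\sigma}{2d-(d+4)\sigma}$, $\tfrac{d}{d-3\sigma}$ and $\sigma^*$, and the requirement $p\ge2$ forces the $\tfrac d{d-3\sigma}$ bound when $3\sigma<d$. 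Assembling these inequalities dimension by dimension is precisely the bookkeeping encoded in \eqref{assumption sigma <2 subcritical 1}--\eqref{assumption sigma <2 subcritical 3} and \eqref{assumption sigma subcritical sigma geq 2}, after which the argument closes uniformly.
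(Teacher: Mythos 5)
Your proposal is correct and follows essentially the same route as the paper's proof: a contraction via Strichartz estimates on $C_t\dot H^\sigma\cap C^1_tL^2\cap L^p_tL^q$ using $\gamma_{p,q}=\sigma$, the nonlinearity handled by H\"older alone through the dual pair $(1,2)$ when $\sigma<2$ (which is what forces $q=2\nu$) and a vanishing-regularity dual admissible pair when $\sigma\geq 2$, with sub-criticality $\nu<\sigma^*$ producing the positive power of $T$, and the standard continuation and Lipschitz arguments for the blow-up alternative and continuous dependence. The case-by-case admissibility bookkeeping you defer to the end is exactly what the paper isolates in Remark $\ref{rem admissible condition}$.
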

This theorem and the conservation of energy imply the following global well-posedness for the defocusing (NLFW). 
\begin{coro} \label{coro global wellposedness wave H sigma subcritical defocusing}
Under assumptions of $\emph{Theorem}$ $\ref{theorem local wellposedness wave H sigma subcritical}$, for all $(\varphi, \phi) \in \dot{H}^\sigma\times L^2$, there exists a unique global solution to the defocusing $(\emph{NLFW})$ satisfying 
\[
v \in C(\R,\dot{H}^\sigma) \cap C^1(\R,L^2) \cap L^p_{\emph{loc}}(\R,L^q),
\] 
where $(p,q)$ are as in \emph{Theorem } $\ref{theorem local wellposedness wave H sigma subcritical}$. 
\end{coro}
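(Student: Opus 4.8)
The plan is to combine the local well-posedness and blow-up alternative supplied by Theorem \ref{theorem local wellposedness wave H sigma subcritical} with the conservation of the energy $E_{\text{w}}$, exploiting the fact that in the defocusing case $\mu=1$ every term of $E_{\text{w}}$ is non-negative. Theorem \ref{theorem local wellposedness wave H sigma subcritical} produces, for data $(\varphi,\phi)\in\dot{H}^\sigma\times L^2$, a maximal solution $v$ on an interval of existence $(-T_*,T^*)$ (the backward direction being symmetric) lying in $C(\dot{H}^\sigma)\cap C^1(L^2)\cap L^p_{\text{loc}}(L^q)$, together with the alternative that $T^*<\infty$ forces $\|[v](t)\|_{\dot{H}^\sigma}\to\infty$ as $t\to T^*$. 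It therefore suffices to bound $\|v(t)\|_{\dot{H}^\sigma}+\|\partial_t v(t)\|_{L^2}$ uniformly on the whole interval of existence; the asserted global regularity and the membership $v\in L^p_{\text{loc}}(\R,L^q)$ then follow by applying the local statement on each compact subinterval, and global uniqueness is inherited from local uniqueness by a standard continuation argument.

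For the a priori bound I would use energy conservation. In the defocusing case,
\[
E_{\text{w}}(v)(t)=\tfrac12\|\partial_t v(t)\|_{L^2}^2+\tfrac12\|v(t)\|_{\dot{H}^\sigma}^2+\tfrac{1}{\nu+1}\|v(t)\|_{L^{\nu+1}}^{\nu+1},
\]
and each of the three summands is non-negative, so the conservation identity $E_{\text{w}}(v)(t)=E_{\text{w}}(v)(0)$ immediately gives
\[
\|\partial_t v(t)\|_{L^2}^2+\|v(t)\|_{\dot{H}^\sigma}^2\le 2\,E_{\text{w}}(v)(0)
\]
for every $t$ in the interval of existence. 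Provided $E_{\text{w}}(v)(0)<\infty$, the left-hand side controls $\|[v](t)\|_{\dot{H}^\sigma}$ uniformly, which contradicts the blow-up alternative whenever $T^*<\infty$; hence $T^*=\infty$, and likewise $T_*=\infty$, yielding a global solution $v\in C(\R,\dot{H}^\sigma)\cap C^1(\R,L^2)\cap L^p_{\text{loc}}(\R,L^q)$.

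Two points require justification and constitute the main work. First, one must verify that $E_{\text{w}}(v)(0)$ is finite, i.e. that $\varphi\in L^{\nu+1}$: since the hypotheses of Theorem \ref{theorem local wellposedness wave H sigma subcritical} are all subcritical one has $2<\nu+1<2d/(d-2\sigma)$, so $\varphi$ lies in $L^{\nu+1}$ by the Sobolev embedding (read on the inhomogeneous scale, or via a density argument approximating $\varphi$ by elements of $\Sh_0$ for which the potential energy is manifestly finite). Second, one must justify that $E_{\text{w}}$ is genuinely conserved along the constructed, merely $\dot{H}^\sigma$-regular solution; the clean route is to establish $\frac{d}{dt}E_{\text{w}}(v)=0$ for smooth solutions by pairing the equation with $\partial_t v$ and integrating by parts, and then to transfer the identity to the general solution by regularizing the data and invoking the continuous dependence part of Theorem \ref{theorem local wellposedness wave H sigma subcritical}. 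The hard part will be precisely this conservation-law justification, namely the finiteness and the stability of the potential energy under the limiting procedure, since $\dot{H}^\sigma$ control does not by itself dominate the $L^{\nu+1}$ norm; once it is in place the global existence follows at once from the blow-up alternative.
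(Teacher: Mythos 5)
Your overall strategy---the local theory and blow-up alternative of Theorem \ref{theorem local wellposedness wave H sigma subcritical} combined with conservation of the sign-definite defocusing energy---is exactly the paper's argument; the paper offers nothing beyond the single sentence ``This theorem and the conservation of energy imply\dots'', so your first two paragraphs reproduce its proof. The gap is in the step you yourself single out as the main work, and your proposed resolution of it is false. You claim $\varphi\in L^{\nu+1}$ by Sobolev embedding because $2<\nu+1<2d/(d-2\sigma)$. But the datum lies only in the \emph{homogeneous} space $\dot{H}^\sigma$, and $\dot{H}^\sigma(\R^d)$ embeds into $L^q(\R^d)$ for the single critical exponent $q=2d/(d-2\sigma)$: testing $\|f\|_{L^q}\lesssim \|f\|_{\dot{H}^\sigma}$ on dilates $f(\cdot/\lambda)$ forces $d/q=d/2-\sigma$, so every subcritical exponent is excluded ($\dot{H}^\sigma$ carries no large-scale information). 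Concretely, $\varphi(x)=\langle x\rangle^{-\alpha}$ with $d/2-\sigma<\alpha<d/(\nu+1)$ lies in $\dot{H}^\sigma$ but not in $L^{\nu+1}$, and this window is nonempty precisely because $\nu+1<2d/(d-2\sigma)$, which holds under the hypotheses of Theorem \ref{theorem local wellposedness wave H sigma subcritical} (see Remark \ref{rem admissible condition}). For such data $E_{\text{w}}(v)(0)=\infty$ and the conservation identity yields no bound at all. Neither of your suggested fixes works: ``reading on the inhomogeneous scale'' presumes $\varphi\in H^\sigma$, which is not the hypothesis, and the density argument fails because convergence in $\dot{H}^\sigma$ gives no control of $L^{\nu+1}$ norms---the approximants have finite potential energy while the limit does not.

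In fairness, this gap is inherited from the paper itself: the corollary is stated for $(\varphi,\phi)\in\dot{H}^\sigma\times L^2$ and its one-line proof tacitly assumes the initial energy is finite. What your argument actually proves is global well-posedness for defocusing data of finite energy, e.g. $(\varphi,\phi)\in(\dot{H}^\sigma\cap L^{\nu+1})\times L^2$, in particular for $H^\sigma\times L^2$ data; covering all of $\dot{H}^\sigma\times L^2$ would require an idea beyond the energy law. Once finiteness of $E_{\text{w}}(v)(0)$ is assumed, the rest of your scheme can be closed: the potential energy stays finite along the flow because $v(t)-\varphi=\int_0^t\partial_s v\,ds\in L^2$, hence $v(t)-\varphi\in L^2\cap\dot{H}^\sigma=H^\sigma\subset L^{\nu+1}$, and your regularization argument for the conservation law is the standard route---though note that the continuous dependence supplied by Theorem \ref{theorem local wellposedness wave H sigma subcritical} is only in $C(\dot{H}^\sigma)\cap C^1(L^2)$, so the convergence of the potential term must be extracted from this $H^\sigma$-type control of differences rather than from the stated topology alone.
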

The following result gives the local well-posedness with small data scattering for (NLFW) in critical cases.
\begin{theorem} \label{theorem local wellposedness critical scattering small data wave}
1. Assume for $d \geq 1$ that
\begin{align}
\sigma \in \Big[\frac{d}{d+1},d\Big) \backslash \{1\}, \quad \nu \in \Big[1+ \frac{4\sigma}{d-\sigma}, \infty \Big), \label{assumption critical wave 1} 
\end{align}
and also, if $\nu$ is not an odd integer, 
\begin{align}
\lceil\gamma_{\text{w}}\rceil-\frac{\sigma}{2} \leq \nu-1. \label{assumption smoothness critical wave}
\end{align}
Let $p, a$ be defined by
\begin{align}
p= \frac{(d+\sigma)(\nu-1)}{2\sigma}, \quad a =\frac{2(d+\sigma)}{d-\sigma}. \label{define p a}
\end{align}
Then for all $(\varphi,\phi) \in \dot{H}^{\gamma_{\emph{w}}}\times\dot{H}^{\gamma_{\emph{w}}-\sigma}$, there exist $T^* \in (0,\infty]$ and a unique solution to \emph{(NLFW)} satisfying
\[
v \in C([0,T^*),\dot{H}^{\gamma_{\emph{w}}}) \cap C^1([0,T^*),\dot{H}^{\gamma_{\emph{w}}-\sigma}) \cap L^p_\emph{loc}([0,T^*), L^p) \cap L^a_\emph{loc}([0,T^*),\dot{H}^{\gamma_{\emph{w}}-\frac{\sigma}{2}}_a).
\]
Moreover, if $\|[v](0)\|_{\dot{H}^{\gamma_{\text{w}}}} < \varepsilon$ for some $\varepsilon>0$ small enough, then $T^* =\infty$ and the solution is scattering in $\dot{H}^{\gamma_{\emph{w}}} \times  \dot{H}^{\gamma_{\emph{w}}-\sigma}$, i.e. there exist $(\varphi^+,\phi^+) \in \dot{H}^{\gamma_{\emph{w}}}\times \dot{H}^{\gamma_{\emph{w}}-\sigma}$ such that the (weak) solution to the linear fractional wave equation 
\[
\left\{
\begin{array}{ccl}
\partial^2_t v^+(t,x) + \Lambda^{2\sigma} v^+(t,x) &=& 0, \quad (t,x) \in \R \times \R^d, \\
v^+(0,x)= \varphi^+(x), & & \partial_t v^+ (0,x) =\phi^+(x), \quad x\in \R^d,
\end{array}
\right. 
\]
satisfy
\[
\lim_{t \rightarrow + \infty} \|[v(t)-v^+(t)]\|_{\dot{H}^{\gamma_{\emph{w}}}} =0.
\]
\indent 2. Assume for $d \geq 1$ that 
\begin{multline}
\sigma \in \Big[\frac{d^2+4d}{3d+4}, \infty\Big) \backslash \{1\}, \quad \nu \in \Big[ 1 + \frac{4\sigma(d+2)}{d(d+\sigma)}, \infty \Big) \\
\emph{ or } \sigma \in \Big[\frac{d}{d+1}, \frac{d^2+4d}{3d+4} \Big) \backslash \{1\}, \quad \nu \in \Big[1+ \frac{4\sigma (d+2)}{d(d+\sigma)}, 1+ \frac{4\sigma(d+2)}{d^2-3d\sigma +4d-4\sigma}\Big]. \label{assumption critical wave 2}
\end{multline}
Then for all $(\varphi,\phi) \in \dot{H}^{\gamma_{\emph{w}}}\times \dot{H}^{\gamma_{\emph{w}}-\sigma}$, there exist $T^*\in (0,\infty]$ and a unique solution to \emph{(NLFW)} satisfying
\[
v \in C([0,T^*),\dot{H}^{\gamma_{\emph{w}}}) \cap C^1([0,T^*),\dot{H}^{\gamma_{\emph{w}}-\sigma}) \cap L^p_\emph{loc}([0,T^*),L^p),
\]
where $p$ is as above. Moreover, if $\|[v](0)\|_{\dot{H}^{\gamma_{\emph{w}}}} < \varepsilon$ for some $\varepsilon>0$ small enough, then $T^* =\infty$ and the solution is scattering in $\dot{H}^{\gamma_{\emph{w}}} \times \dot{H}^{\gamma_{\emph{w}}-\sigma}$.
\end{theorem}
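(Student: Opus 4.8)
The plan is to solve \emph{(NLFW)} by a contraction mapping argument for the Duhamel operator
\begin{align*}
\Phi(v)(t) := \cos(t\Lambda^\sigma)\varphi + \frac{\sin(t\Lambda^\sigma)}{\Lambda^\sigma}\phi - \mu \int_0^t \frac{\sin((t-s)\Lambda^\sigma)}{\Lambda^\sigma} F(v(s))\, ds, \qquad F(v)=|v|^{\nu-1}v,
\end{align*}
posed on an interval $I$ (either $[0,T]$ or $\R$) in the space of functions $v$ for which
\begin{align*}
\|v\|_{X(I)} := \|v\|_{L^\infty(I,\dot{H}^{\gamma_{\text{w}}})} + \|\partial_t v\|_{L^\infty(I,\dot{H}^{\gamma_{\text{w}}-\sigma})} + \|v\|_{L^p(I,L^p)} + \|v\|_{L^a(I,\dot{H}^{\gamma_{\text{w}}-\sigma/2}_a)}
\end{align*}
is finite, with $\gamma_{\text{w}}$ as in $(\ref{critical exponent wave})$ and $(p,a)$ as in $(\ref{define p a})$. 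The first step is to record, from $(\ref{define gamma})$ and a direct computation, that $\gamma_{p,p}=\gamma_{\text{w}}$ and $\gamma_{a,a}=\sigma/2$, so that $L^p(I,L^p)=L^p(I,\dot{H}^{\gamma_{\text{w}}-\gamma_{p,p}}_p)$ and $L^a(I,\dot{H}^{\gamma_{\text{w}}-\sigma/2}_a)=L^a(I,\dot{H}^{\gamma_{\text{w}}-\gamma_{a,a}}_a)$ are exactly the two \emph{diagonal} Strichartz norms at regularity $\gamma_{\text{w}}$ attached to the admissible pairs $(p,p)$ and $(a,a)$; checking that these pairs meet $(\ref{admissible condition})$ throughout $(\ref{assumption critical wave 1})$ is the first (routine but case-dependent) verification. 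The homogeneous and inhomogeneous Strichartz estimates for the fractional wave propagator then give $\|\Phi(v)\|_{X(I)} \lesssim \|\varphi\|_{\dot{H}^{\gamma_{\text{w}}}}+\|\phi\|_{\dot{H}^{\gamma_{\text{w}}-\sigma}} + \|F(v)\|_{N(I)}$, where $N(I)=L^{a'}(I,\dot{H}^{\gamma_{\text{w}}-\sigma/2}_{a'})$ is the dual Strichartz norm produced by the pair $(a,a)$ together with the $\Lambda^{-\sigma}$ smoothing in the Duhamel kernel.

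The heart of the matter is the nonlinear estimate $\|F(v)\|_{N(I)} \lesssim \|v\|_{X(I)}^{\nu}$, together with its Lipschitz analogue $\|F(v)-F(w)\|_{N(I)} \lesssim (\|v\|_{X(I)}^{\nu-1}+\|w\|_{X(I)}^{\nu-1})\|v-w\|_{X(I)}$. Under $(\ref{assumption smoothness critical wave})$ the map $F$ is of class $C^{\lceil \gamma_{\text{w}}-\sigma/2\rceil}$ (and smooth when $\nu$ is an odd integer), so the fractional chain rule of Corollary $\ref{coro fractional derivatives}$ applies and lets me move all $\gamma_{\text{w}}-\sigma/2$ derivatives onto a single factor:
\begin{align*}
\|\Lambda^{\gamma_{\text{w}}-\sigma/2}F(v)\|_{L^{a'}} \lesssim \|v\|_{L^p}^{\nu-1}\,\|\Lambda^{\gamma_{\text{w}}-\sigma/2}v\|_{L^a}.
\end{align*}
The crucial accounting is that, by the very choice $(\ref{define p a})$, both the spatial and the temporal Hölder exponents obtained by distributing $\nu-1$ derivative-free factors (in $L^p$) and one derivative-carrying factor (in $L^a$, resp.\ $\dot{H}^{\gamma_{\text{w}}-\sigma/2}_a$) equal $a'$: indeed $(\nu-1)/p+1/a=(d+3\sigma)/(2(d+\sigma))=1/a'$. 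Integrating in time and applying Hölder closes the estimate into $N(I)$. This is the step I expect to be the main obstacle, since it is where the precise values of $p$ and $a$, the admissibility of $(p,p)$ and $(a,a)$, and the smoothness threshold $(\ref{assumption smoothness critical wave})$ must all be compatible simultaneously over the whole parameter region.

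With the nonlinear estimate in hand the fixed-point argument is standard. For local existence I take $I=[0,T]$ and observe that, although the data norm does not decay with $T$, the homogeneous Strichartz norms such as $\|\cos(t\Lambda^\sigma)\varphi\|_{L^p([0,T],L^p)}$ tend to $0$ as $T\to 0$ by absolute continuity of the integral (they are finite over $\R$); hence $\Phi$ maps a small ball of $X([0,T])$ into itself and contracts there, yielding a unique solution, the blow-up alternative, and, via the usual perturbation of the contraction, continuous dependence on $(\varphi,\phi)$. For global existence and scattering I take $I=\R$ and use that $\|\varphi\|_{\dot{H}^{\gamma_{\text{w}}}}<\varepsilon$ makes the homogeneous part of $\|\Phi(v)\|_{X(\R)}$ small, so the same contraction runs on all of $\R$ with $\|v\|_{X(\R)}\lesssim\varepsilon$. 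The scattering states are then defined by $[v^+](0):=[v](0)+\mu\int_0^\infty \left(\frac{\sin(s\Lambda^\sigma)}{\Lambda^\sigma},\,-\cos(s\Lambda^\sigma)\right)F(v(s))\,ds$, the integral converging in $\dot{H}^{\gamma_{\text{w}}}\times\dot{H}^{\gamma_{\text{w}}-\sigma}$ because $\|F(v)\|_{N(\R)}<\infty$, and $\|[v(t)-v^+(t)]\|_{\dot{H}^{\gamma_{\text{w}}}}\to 0$ follows from the vanishing of the corresponding Duhamel tail.

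Part 2 is proved by the same scheme but with a reduced resolution space retaining only the $L^p(I,L^p)$ Strichartz norm; there the derivative-carrying bound is supplied by a \emph{non-diagonal} admissible pair instead of $(a,a)$, and the distinct range $(\ref{assumption critical wave 2})$ of $(\sigma,\nu)$ across the dimensional cases is exactly what makes that auxiliary pair admissible and its Hölder exponents compatible, so that the nonlinearity can be controlled through $\|v\|_{L^p(I,L^p)}$ alone.
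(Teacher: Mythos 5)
Your Part 1 is, in its analytic content, the same proof as the paper's: the diagonal pairs $(p,p)$ and $(a,a)$ with $\gamma_{p,p}=\gamma_{\text{w}}$ and $\gamma_{a,a}=\sigma/2$, the chain-rule estimate $\|\Lambda^{\gamma_{\text{w}}-\sigma/2}F(v)\|_{L^{a'}}\lesssim \|v\|_{L^p}^{\nu-1}\|\Lambda^{\gamma_{\text{w}}-\sigma/2}v\|_{L^a}$ closed by the exponent identity $(\nu-1)/p+1/a=1/a'$, the dual norm $L^{a'}(I,\dot{H}^{\gamma_{\text{w}}-\sigma/2}_{a'})$, and the scattering state obtained from the convergence of the Duhamel tail are exactly the paper's ingredients. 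The flaw is in how you package the fixed point. You place the norms $\|v\|_{L^\infty(I,\dot{H}^{\gamma_{\text{w}}})}$ and $\|\partial_t v\|_{L^\infty(I,\dot{H}^{\gamma_{\text{w}}-\sigma})}$ inside the single norm $\|\cdot\|_{X(I)}$ and claim that for small $T$ the map $\Phi$ "maps a small ball of $X([0,T])$ into itself and contracts there." Since $\Phi(v)(0)=\varphi$ for every $v$, one always has $\|\Phi(v)\|_{X(I)}\geq \|\varphi\|_{\dot{H}^{\gamma_{\text{w}}}}$, so no ball of radius smaller than the data norm can be invariant, no matter how small $T$ is — and, as you yourself note, shrinking $T$ does nothing for the $L^\infty$-in-time components. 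A large ball does not work either: in the critical case there is no positive power of $T$ in the nonlinear estimate, so the self-mapping inequality $C\|[v](0)\|_{\dot{H}^{\gamma_{\text{w}}}}+CR^{\nu}\leq R$ has no solution $R$ once the data norm exceeds a fixed constant. As written, your argument therefore proves only small-data local existence, not existence for all $(\varphi,\phi)\in \dot{H}^{\gamma_{\text{w}}}\times\dot{H}^{\gamma_{\text{w}}-\sigma}$ as the theorem asserts.

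The missing idea — and the paper's device — is to decouple the radii: one works on the set where $\|[v]\|_{L^\infty(I,\dot{H}^{\gamma_{\text{w}}})}\leq M$ and $\|v\|_{L^p(I,L^p)}+\|v\|_{L^a(I,\dot{H}^{\gamma_{\text{w}}-\sigma/2}_a)}\leq N$, with $M\sim \|[v](0)\|_{\dot{H}^{\gamma_{\text{w}}}}$ allowed to be large and $N=2\varepsilon$ small. This closes precisely because your own nonlinear estimate bounds $\|F(v)\|_{L^{a'}(I,\dot{H}^{\gamma_{\text{w}}-\sigma/2}_{a'})}\lesssim \|v\|^{\nu-1}_{L^p(I,L^p)}\|v\|_{L^a(I,\dot{H}^{\gamma_{\text{w}}-\sigma/2}_a)}\leq N^{\nu}$ without ever involving the $L^\infty$ norms, so both the self-map error and the Lipschitz constant $CN^{\nu-1}$ depend only on $N$, which is made small through the homogeneous part (dominated convergence for small $T$, or small data). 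A second, lesser point: your sketch of Part 2 misdescribes its mechanism. There is no ``derivative-carrying non-diagonal pair''; the paper takes the diagonal dual pair $(b,b)$ with $b'=p/\nu$, so that $\|F(v)\|_{L^{b'}(I,L^{b'})}=\|v\|^{\nu}_{L^p(I,L^p)}$ by H\"older alone — no fractional chain rule — and the gap condition $\gamma_{p,p}=\gamma_{\text{w}}=\gamma_{b',b'}+2\sigma$, guaranteed by $(\ref{assumption critical wave 2})$, is what lets the inhomogeneous Strichartz estimate accept this input. The same two-radius correction is needed there as well.
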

Finally, we have the following local well-posedness and scattering with small data for (NLFW) in $\sigma$-critical case.
\begin{theorem} \label{theorem local wellposedness wave H sigma critical}
Let 
\begin{align}
\left\{
\begin{array}{cl}
\sigma \in \Big[\frac{d}{d+2}, \frac{d}{2}\Big) \backslash \{1\} & \text{when } d=\{1,2,3,4\}, \\
\sigma \in \Big[\frac{d}{6}, \frac{d}{2}\Big) \backslash \{1\} & \text{when } d \geq 5,
\end{array}
\right. \label{H sigma critical wave condition}
\end{align}
and $\nu = 1+ 4 \sigma/(d-2\sigma)$. Then for all $(\varphi, \phi) \in \dot{H}^\sigma \times L^2$, there exist $T^* \in (0,\infty]$ and a unique solution to \emph{(NLFW)} satisfying 
\[ 
v \in C([0,T^*),\dot{H}^\sigma) \cap C^1([0,T^*),L^2) \cap L^{\nu}_\emph{loc}([0,T^*),L^{2\nu}).
\] 
Moreover, if $\|[v](0)\|_{\dot{H}^\sigma} < \varepsilon$ for some $\varepsilon>0$ small enough, then $T^*=\infty$ and the solution is scattering in $\dot{H}^\sigma \times L^2$.
\end{theorem}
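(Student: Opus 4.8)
\emph{Plan.} The plan is to run a contraction-mapping argument on the Duhamel formulation
\[
v(t) = \cos(t\Lambda^\sigma)\varphi + \frac{\sin(t\Lambda^\sigma)}{\Lambda^\sigma}\phi - \mu\int_0^t \frac{\sin((t-s)\Lambda^\sigma)}{\Lambda^\sigma}(|v|^{\nu-1}v)(s)\,ds,
\]
carried out in the scaling-critical Strichartz norm $L^\nu([0,T],L^{2\nu})$. The structural fact driving everything is that the exponent pair $(\nu,2\nu)$ satisfies $\gamma_{\nu,2\nu}=\sigma=\gamma_{\text{w}}$: since $\nu=(d+2\sigma)/(d-2\sigma)$, a direct computation from $(\ref{define gamma})$ gives $\gamma_{\nu,2\nu}=\tfrac d2-\tfrac{d+2\sigma}{2\nu}=\sigma$, which by $(\ref{critical exponent wave})$ equals $\gamma_{\text{w}}$. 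The hypotheses $(\ref{H sigma critical wave condition})$ are exactly what make $(\nu,2\nu)$ admissible in the sense of $(\ref{admissible condition})$: the requirement $\tfrac2\nu+\tfrac d{2\nu}\le\tfrac d2$ is equivalent to $\sigma\ge d/(d+2)$ (binding for $d\le4$), while $p=\nu\ge2$ is equivalent to $\sigma\ge d/6$ (binding for $d\ge5$); the forbidden endpoint $(2,\infty)$ never occurs because $q=2\nu<\infty$. First I would record, from the fractional-wave Strichartz estimates established earlier, that with data in $\dot H^\sigma\times L^2=\dot H^{\gamma_{\text{w}}}\times\dot H^{\gamma_{\text{w}}-\sigma}$ the free flow lies in $L^\nu_tL^{2\nu}_x\cap L^\infty_t\dot H^\sigma$ with norm $\lesssim\|\varphi\|_{\dot H^\sigma}+\|\phi\|_{L^2}$ (using $\dot B^0_{2\nu}\subset L^{2\nu}$ to pass from the Besov output to $L^{2\nu}$).

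The decisive simplification, and the reason no smoothness hypothesis on $F(v)=\mu|v|^{\nu-1}v$ appears in the statement, is that the nonlinearity is controlled without any fractional chain rule. Writing $\mathcal DF(t)=\int_0^t\frac{\sin((t-s)\Lambda^\sigma)}{\Lambda^\sigma}F(s)\,ds$, I would bound the energy piece by moving $\Lambda^\sigma$ onto the kernel, $\|\mathcal DF\|_{L^\infty_t\dot H^\sigma}=\|\int_0^t\sin((t-s)\Lambda^\sigma)F(s)\,ds\|_{L^\infty_tL^2}\le\|F\|_{L^1_tL^2_x}$, since $\sin((t-s)\Lambda^\sigma)$ is a bounded multiplier on $L^2$; and the critical piece by the homogeneous estimate applied for each fixed $s$ together with Minkowski's inequality, $\|\mathcal DF\|_{L^\nu_tL^{2\nu}_x}\lesssim\int_0^T\|F(s)\|_{L^2}\,ds=\|F\|_{L^1_tL^2_x}$. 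Because $\||v|^{\nu-1}v(s)\|_{L^2}=\|v(s)\|_{L^{2\nu}}^\nu$, both reduce to $\|F(v)\|_{L^1_tL^2_x}=\|v\|_{L^\nu_tL^{2\nu}_x}^\nu$, and the elementary pointwise bound $|F(v)-F(w)|\lesssim(|v|^{\nu-1}+|w|^{\nu-1})|v-w|$, valid for every $\nu>1$, yields by Hölder $\|F(v)-F(w)\|_{L^1_tL^2_x}\lesssim(\|v\|_{L^\nu_tL^{2\nu}_x}^{\nu-1}+\|w\|_{L^\nu_tL^{2\nu}_x}^{\nu-1})\|v-w\|_{L^\nu_tL^{2\nu}_x}$.

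With these estimates the contraction closes on a ball of $L^\nu([0,T],L^{2\nu})$ of radius comparable to $\eta:=\|(\text{free flow})\|_{L^\nu([0,T],L^{2\nu})}$, since the self-map has the schematic form $\rho\mapsto C\eta+C\rho^\nu$, preserving $\{\rho\le2C\eta\}$ and contracting once $\eta$ is small. For the local statement I would make $\eta$ small by shrinking $T$ (dominated convergence, using that the free flow is globally in $L^\nu(\R,L^{2\nu})$); for the global-and-scattering statement I would take $T=\infty$ and make $\eta$ small directly from the hypothesis $\|[v](0)\|_{\dot H^\sigma}<\varepsilon$ via the global Strichartz bound. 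The fixed point then satisfies $v\in C([0,T^*),\dot H^\sigma)\cap C^1([0,T^*),L^2)\cap L^\nu_{\text{loc}}([0,T^*),L^{2\nu})$, the $C^1$ statement coming from differentiating Duhamel and the same $L^1_tL^2_x$ bound, and time continuity from the usual density/Strichartz argument; uniqueness in the Strichartz class follows from the difference estimate on small subintervals. For scattering I would set $(\varphi^+,\phi^+)=(\varphi,\phi)-\int_0^\infty S(-s)(0,F(v(s)))\,ds$, where $S$ is the free propagator on $\dot H^\sigma\times L^2$; since $\|S(-s)(0,F(v(s)))\|_{\dot H^\sigma\times L^2}=\|F(v(s))\|_{L^2}\in L^1(\R^+)$ by the global bound, the integral converges and $\|[v(t)-v^+(t)]\|_{\dot H^\sigma}\lesssim\int_t^\infty\|F(v(s))\|_{L^2}\,ds\to0$.

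\emph{Main obstacle.} I expect the difficulty to lie not in the nonlinear analysis — which is unusually clean because the critical norm $L^\nu L^{2\nu}$ sees the nonlinearity only through $L^1_tL^2_x$ — but in the bookkeeping at the linear level: verifying that $(\nu,2\nu)$ is genuinely admissible across both regimes of $(\ref{H sigma critical wave condition})$, the $d\le4$ versus $d\ge5$ split reflecting exactly the competition between $\sigma\ge d/(d+2)$ and $\sigma\ge d/6$, and correctly transferring the fractional-wave Strichartz output from $\dot B^0_{2\nu}$ to $L^{2\nu}$ while keeping the gain of $\sigma$ derivatives from the $\Lambda^{-\sigma}$ in the Duhamel kernel accounted for.
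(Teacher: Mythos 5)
Your proposal is correct and follows essentially the same route as the paper: a fixed-point argument in $L^\nu_t L^{2\nu}_x$ (with the energy norm $L^\infty_t\dot{H}^\sigma \cap W^{1,\infty}_tL^2$ tracked alongside), exploiting that $(\nu,2\nu)$ is admissible with $\gamma_{\nu,2\nu}=\sigma=\gamma_{1,2}+2\sigma$ under $(\ref{H sigma critical wave condition})$, so that the nonlinearity is handled purely through $\|F(v)\|_{L^1_tL^2_x}=\|v\|^\nu_{L^\nu_tL^{2\nu}_x}$ and the elementary difference bound, with no fractional chain rule, and with scattering obtained from the same $L^1_tL^2_x$ integrability of $F(v)$. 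The minor presentational differences (Minkowski plus the homogeneous estimate in place of the paper's inhomogeneous Strichartz estimate with $(a',b')=(1,2)$, and vector versus matrix notation for the free wave group) do not change the substance of the argument.
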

\indent The rest of this note is organized as follows. In Section 2, we prove Strichartz estimates for the fractional Schr\"odinger and wave equations. In Section 3, we recall the fractional derivatives of the nonlinearity. Section 4 is devoted to the proofs of local well-posedness for sub-critical (NLFS) and the local well-posedness with small data scattering for critical (NLFS). We finally prove the local well-posedness for sub-critical (NLFW) and the local well-posedness with small data scattering for critical (NLFW) in Section 5. 
\section{Strichartz estimates}
\setcounter{equation}{0}
In this section, we recall Strichartz estimates for the linear fractional Schr\"odinger and wave equations. 
\begin{theorem}[Strichartz estimates \cite{ChoOzawaXia}] \label{theorem full strichartz schrodinger} 
Let $d\geq 1, \sigma \in (0,\infty)\backslash \{1\}, \gamma\in \R$ and a (weak) solution to the linear fractional Schr\"odinger equation, namely
\[
u(t)= e^{-it\Lambda^\sigma} \varphi + \int_{0}^{t} e^{-i(t-s)\Lambda^\sigma} F(s) ds,
\]
for some data $\varphi, F$. Then for all $(p,q)$ and $(a,b)$ admissible pairs, 
\begin{align}
\|u\|_{L^p(\R,\dot{B}^\gamma_q)} \lesssim \|\varphi\|_{\dot{H}^{\gamma+\gamma_{p,q}}} + \|F\|_{L^{a'}(\R,\dot{B}^{\gamma+\gamma_{p,q}-\gamma_{a',b'}-\sigma}_{b'})}, \label{full strichartz schrodinger}
\end{align}
where $\gamma_{p,q}$ and  $\gamma_{a',b'}$ are as in $(\ref{define gamma})$. In particular, 
\begin{align}
\|u\|_{L^p(\R,\dot{B}^{\gamma-\gamma_{p,q}}_q)} \lesssim \|\varphi\|_{\dot{H}^{\gamma}} + \|F\|_{L^1(\R,\dot{H}^\gamma)}, \label{homogeneous full strichartz schrodinger}
\end{align}
and 
\begin{align}
\|u\|_{L^\infty(\R,\dot{B}^{\gamma_{p,q}}_2)}+\|u\|_{L^p(\R,\dot{B}^0_q)} \lesssim \|\varphi\|_{\dot{H}^{\gamma_{p,q}}} + \|F\|_{L^{a'}(\R,\dot{B}^0_{b'})}, \label{inhomogeneous full strichartz schrodinger}
\end{align}
provided that 
\begin{align}
\gamma_{p,q}=\gamma_{a',b'}+\sigma. \label{gap condition}
\end{align}
Here $(a,a')$ is a conjugate pair.
\end{theorem}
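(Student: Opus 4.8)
The plan is to reduce the whole family of estimates to a single frequency-localized dispersive bound for the propagator $e^{-it\Lambda^\sigma}$, feed it into the abstract (Keel--Tao) Strichartz machinery block by block, and then reassemble the Littlewood--Paley pieces to recover the Besov-valued norms. First I would fix a dyadic $N$ and examine the kernel of $P_N e^{-it\Lambda^\sigma}$, namely
\[
K_N(t,x)=\int_{\R^d} e^{i(x\cdot\xi-t|\xi|^\sigma)}\,\chi(N^{-1}\xi)\,d\xi ,
\]
so that $P_N e^{-it\Lambda^\sigma}\varphi=K_N(t,\cdot)\ast\varphi$. Rescaling $\xi=N\eta$ gives $K_N(t,x)=N^d\int e^{i(Nx\cdot\eta-tN^\sigma|\eta|^\sigma)}\chi(\eta)\,d\eta$, so everything is governed by the phase $\psi(\eta)=|\eta|^\sigma$ on the annulus $|\eta|\sim 1$. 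Its Hessian has the radial eigenvalue $\sigma(\sigma-1)|\eta|^{\sigma-2}$ and the $(d-1)$-fold tangential eigenvalue $\sigma|\eta|^{\sigma-2}$, all bounded away from zero precisely because $\sigma\neq 1$; hence $\det\nabla^2\psi$ is nondegenerate and stationary phase yields $\|K_N(t,\cdot)\|_{L^\infty}\lesssim N^d(N^\sigma|t|)^{-d/2}$.

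Combined with the unitarity bound $\|P_N e^{-it\Lambda^\sigma}\varphi\|_{L^2}=\|P_N\varphi\|_{L^2}$, this dispersive estimate (decay exponent $d/2$) is exactly the input required by the Keel--Tao abstract theorem. Applying it for each $N$, and fixing the power of $N$ by the scaling $t\mapsto N^\sigma t,\ x\mapsto Nx$, I expect the per-block bound
\[
\|P_N e^{-it\Lambda^\sigma}\varphi\|_{L^p(\R,L^q)}\lesssim N^{\gamma_{p,q}}\|P_N\varphi\|_{L^2},\qquad \gamma_{p,q}=\frac{d}{2}-\frac{d}{q}-\frac{\sigma}{p},
\]
for every admissible $(p,q)$, the excluded triple $(p,q,d)=(2,\infty,2)$ being the forbidden double endpoint. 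To pass to the Besov norms I would use that $p,q\ge 2$, so Minkowski's inequality moves the $\ell^2$ dyadic sum outside the $L^p_tL^q_x$ norm; with the regularity $\gamma-\gamma_{p,q}$ the weights $N^{2(\gamma-\gamma_{p,q})}N^{2\gamma_{p,q}}=N^{2\gamma}$ collapse and the homogeneous part of $(\ref{homogeneous full strichartz schrodinger})$ follows, $\|e^{-it\Lambda^\sigma}\varphi\|_{L^p(\R,\dot B^{\gamma-\gamma_{p,q}}_q)}\lesssim\|\varphi\|_{\dot H^\gamma}$.

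The inhomogeneous terms are then handled by duality of this homogeneous estimate together with the Christ--Kiselev lemma, which converts the untruncated integral $\int_\R$ into the retarded one $\int_0^t$ (legitimate away from the endpoint, where $p>a'$). Composing the estimate for $(p,q)$ with the dual of the one for $(a,b)$ in a $TT^*$ fashion and keeping track of the Besov indices produces the general source norm of $(\ref{full strichartz schrodinger})$; here the shift $-\sigma$ reflects that one Duhamel integration against the propagator carries the scaling dimension of $\Lambda^{-\sigma}$, while the two terms $\gamma_{p,q}$ and $-\gamma_{a',b'}$ record the regularity traded for space-time integrability on each factor — a count forced by the scaling $u(t,x)\mapsto u(\lambda^\sigma t,\lambda x)$. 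Estimate $(\ref{homogeneous full strichartz schrodinger})$ is recovered by bounding the source crudely in $L^1_t\dot H^\gamma$ (Minkowski in time and unitarity), and $(\ref{inhomogeneous full strichartz schrodinger})$ is the special case in which the gap condition $(\ref{gap condition})$ makes the two frequency weights cancel.

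The genuinely delicate step is the dispersive bound on $K_N$: the nondegeneracy of $\nabla^2\psi$ away from $\sigma=1$ gives the full $t^{-d/2}$ decay, but the constants must be made uniform in $N$ (achieved by the rescaling above) and the non-stationary region $|x|\not\sim N^{\sigma-1}|t|$ must be dispatched by repeated integration by parts. This is precisely the estimate proved in \cite{ChoOzawaXia}, which I would invoke; I also expect the Keel--Tao endpoint pairs to require the bilinear/real-interpolation refinement rather than Christ--Kiselev, while all the remaining steps are bookkeeping with the Littlewood--Paley projections.
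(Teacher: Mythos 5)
Your overall architecture is the same as the paper's: Littlewood--Paley localization, a stationary-phase dispersive bound for each dyadic block, the Keel--Tao/$TT^*$ machinery, scaling to fix the powers of $N$, and Minkowski's inequality to resum in $\ell^2(2^{\Z})$. However, one step fails as written. Your dispersive input is the pure power bound $\|K_N(t,\cdot)\|_{L^\infty}\lesssim N^d(N^\sigma|t|)^{-d/2}$, and you feed it into the abstract Keel--Tao theorem to claim the per-block estimate $\|P_Ne^{-it\Lambda^\sigma}\varphi\|_{L^p(\R,L^q)}\lesssim N^{\gamma_{p,q}}\|P_N\varphi\|_{L^2}$ \emph{for every} admissible $(p,q)$. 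But admissibility in this paper, cf. $(\ref{admissible condition})$, is the non-sharp condition $\frac{2}{p}+\frac{d}{q}\leq\frac{d}{2}$, whereas the Keel--Tao theorem with decay $|t-s|^{-d/2}$ only yields the sharp-scaling pairs $\frac{2}{p}+\frac{d}{q}=\frac{d}{2}$. For a strictly non-sharp pair (e.g.\ $(p,q)=(\infty,q)$ with $q>2$, or any pair with strict inequality, and such pairs are genuinely used in the well-posedness proofs later in the paper), the claimed bound does not follow from the power decay alone. This is precisely why the paper works at unit frequency with the \emph{truncated} dispersive estimate $\|e^{-it\Lambda^\sigma}P_1\|_{L^1\rightarrow L^\infty}\lesssim(1+|t|)^{-d/2}$: the extra boundedness at $t=0$, which is trivial from the compact frequency support since $\|K_1(t,\cdot)\|_{L^\infty}\lesssim 1$ uniformly in $t$, is exactly what enlarges the admissible range from the sharp line to the half-plane, and only after obtaining $(\ref{strichartz reduction 1})$--$(\ref{strichartz reduction 2})$ for $P_1$ does the paper rescale to general $N$. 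Your gap is easily repaired: either record the trivial bound $\|K_N(t,\cdot)\|_{L^\infty}\lesssim N^d$ and run the $TT^*$ argument with the kernel bound $\min\bigl\{N^d,\,N^d(N^\sigma|t|)^{-d/2}\bigr\}$ (equivalently, rescale to $N=1$ first, as the paper does), or deduce the non-sharp pairs from the sharp ones by Bernstein's inequality $\|P_Nu\|_{L^q}\lesssim N^{d/\tilde{q}-d/q}\|P_Nu\|_{L^{\tilde{q}}}$ for $\tilde{q}\leq q$.

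A secondary, non-fatal difference concerns the retarded integral $\int_0^t$. You propose duality plus the Christ--Kiselev lemma (valid when $p>a'$), with a bilinear/real-interpolation refinement at the double endpoint $p=a'=2$. The paper instead invokes the $TT^*$-criterion of Keel--Tao (and Bahouri--Chemin--Danchin), which already contains the frequency-localized retarded estimate $(\ref{strichartz reduction 2})$ for all pairs of admissible exponents, so no truncation lemma is needed. Both routes are legitimate; the paper's is shorter because the retarded estimates come packaged with the abstract theorem, while yours makes the mechanism (untruncated estimate plus Christ--Kiselev) explicit at the cost of a separate endpoint discussion.
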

\noindent \textit{Sketch of proof.} We firstly note this theorem is proved if we establish
\begin{align}
\|e^{-it\Lambda^\sigma}P_1 \varphi\|_{L^p(\R, L^q)} &\lesssim  \|P_1 \varphi\|_{L^2}, \label{strichartz reduction 1} \\
\Big|\Big|\int_0^t e^{-i(t-s)\Lambda^\sigma} P_1 F(s)ds \Big| \Big|_{L^p(\R, L^q)} &\lesssim  \|P_1 F\|_{L^{a'}(\R, L^{b'})}, \label{strichartz reduction 2}
\end{align}
for all $(p,q)$, $(a,b)$ admissible pairs. Indeed, by change of variables, we see that
\begin{align}
\|e^{-it\Lambda^\sigma} P_N \varphi \|_{L^p(\R, L^q)} &= N^{-(d/q+\sigma/p)} \|e^{-it\Lambda^\sigma} P_1   \varphi_N\|_{L^p(\R, L^q)}, \nonumber \\
\|P_1 \varphi_N\|_{L^2} &= N^{d/2} \|P_N \varphi\|_{L^2}, \nonumber
\end{align}
where $\varphi_N(x)= \varphi(N^{-1}x)$. The estimate $(\ref{strichartz reduction 1})$ implies that
\begin{align}
\|e^{-it\Lambda^\sigma}P_N  \varphi\|_{L^p(\R, L^q)}\lesssim N^{\gamma_{p,q}} \|P_N \varphi\|_{L^2}, \label{homogeneous strichartz reduction}
\end{align}
for all $N \in 2^{\Z}$. Similarly,
\begin{multline}
\Big|\Big|\int_0^t e^{-i(t-s)\Lambda^\sigma} P_N F(s)ds \Big|\Big|_{L^p(\R, L^q)} \nonumber \\ 
= N^{-(d/q+\sigma/p+\sigma)} \Big| \Big| \int_0^t e^{-i(t-s)\Lambda^\sigma} P_1  F_N(s)ds\Big|\Big|_{L^p(\R,L^q)}, \nonumber  
\end{multline}
where $F_N(t,x)= F(N^{-\sigma}t,N^{-1}x)$. We also have from $(\ref{strichartz reduction 2})$ and the fact 
\[
\|P_1 F_N\|_{L^{a'}(\R, L^{b'})} = N^{(d/b' + \sigma/a')} \|P_NF\|_{L^{a'}(\R, L^{b'})}
\]
that
\begin{align}
\Big|\Big|\int_0^t e^{-i(t-s)\Lambda^\sigma} P_N F(s)ds \Big| \Big|_{L^p(\R, L^q)} \lesssim N^{\gamma_{p,q}-\gamma_{a',b'}-\sigma}\|P_N F\|_{L^{a'}(\R, L^{b'})}, \label{inhomogeneous strichartz reduction}
\end{align}
for all $N \in 2^{\Z}$. We see from $(\ref{homogeneous strichartz reduction})$ and $(\ref{inhomogeneous strichartz reduction})$ that
\begin{align}
N^{\gamma}\|P_Nu\|_{L^p(\R, L^q)} \lesssim N^{\gamma+\gamma_{p,q}} \|P_N \varphi\|_{L^2} + N^{\gamma+\gamma_{p,q}-\gamma_{a',b'}-\sigma} \|P_N F\|_{L^{a'}(\R,L^{b'})}. \nonumber
\end{align}
By taking the $\ell^2(2^{\Z})$ norm both sides and using the Minkowski inequality, we get $(\ref{full strichartz schrodinger})$. The estimates $(\ref{homogeneous full strichartz schrodinger})$ and $(\ref{inhomogeneous full strichartz schrodinger})$ follow easily from $(\ref{full strichartz schrodinger})$. It remains to prove $(\ref{strichartz reduction 1})$ and $(\ref{strichartz reduction 2})$. By the $TT^*$-criterion (see \cite{KeelTaoTTstar} or \cite{BCDfourier}), we need to show
\begin{align}
\|T(t)\|_{L^2 \rightarrow L^2} &\lesssim  1, \label{energy estimate} \\
\|T(t)\|_{L^1 \rightarrow L^\infty} &\lesssim (1+ |t|)^{-d/2}, \label{dispersive estimate}
\end{align}
for all $t \in \R$ where $T(t):= e^{-it\Lambda^\sigma} P_1$. The energy estimate $(\ref{energy estimate})$ is obvious by using the Plancherel theorem. The dispersive estimate $(\ref{dispersive estimate})$ follows by the standard stationary phase theorem. The proof is complete.
\defendproof
\begin{coro} \label{coro usual strichartz schrodinger}
Let $d\geq 1, \sigma \in (0,\infty)\backslash \{1\}, \gamma \in \R$. If $u$ is a (weak) solution to the linear fractional Schr\"odinger equation for some data $\varphi, F$, then for all $(p,q)$ and $(a,b)$ admissible with $q<\infty$ and $b<\infty$ satisfying $(\ref{gap condition})$, 
\begin{align}
\|u\|_{L^p(\R,\dot{H}^{\gamma-\gamma_{p,q}}_q)} &\lesssim \|\varphi\|_{\dot{H}^\gamma} + \|F\|_{L^1(\R, \dot{H}^\gamma)}, \label{usual homogeneous strichartz schrodinger} \\
\|u\|_{L^\infty(\R,\dot{H}^{\gamma_{p,q}})}+\|u\|_{L^p(\R, L^q)} &\lesssim \|\varphi\|_{\dot{H}^{\gamma_{p,q}}} + \|F\|_{L^{a'}(\R,L^{b'})}. \label{usual inhomogeneous strichartz schrodinger}
\end{align}
\end{coro}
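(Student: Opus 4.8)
The plan is to deduce both estimates directly from Theorem \ref{theorem full strichartz schrodinger} by converting its Besov-norm bounds into the Sobolev-norm bounds claimed here, using the inclusions between homogeneous Besov and Sobolev spaces recorded in the introduction. Recall from there that $\dot{B}^\gamma_r \subset \dot{H}^\gamma_r$ whenever $2 \leq r < \infty$, that the reverse inclusion $\dot{H}^\gamma_r \subset \dot{B}^\gamma_r$ holds for $1 < r \leq 2$, and that $\dot{B}^\gamma_2 = \dot{H}^\gamma$. Since every admissible pair has its second exponent in $[2,\infty]$, on the solution side we are always in the regime $r \geq 2$ (gaining a free passage from Besov to Sobolev), whereas the forcing term is measured with a conjugate exponent $b' \leq 2$, where the opposite inclusion is the one needed.

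For the first estimate $(\ref{usual homogeneous strichartz schrodinger})$, I would start from the homogeneous bound $(\ref{homogeneous full strichartz schrodinger})$, which gives $\|u\|_{L^p(\R, \dot{B}^{\gamma-\gamma_{p,q}}_q)} \lesssim \|\varphi\|_{\dot{H}^\gamma} + \|F\|_{L^1(\R, \dot{H}^\gamma)}$. The hypothesis $q<\infty$ together with $q \geq 2$ from admissibility places us in the range $2 \leq q < \infty$, so the inclusion $\dot{B}^{\gamma-\gamma_{p,q}}_q \subset \dot{H}^{\gamma-\gamma_{p,q}}_q$ applies at each time and yields $\|u\|_{L^p(\R, \dot{H}^{\gamma-\gamma_{p,q}}_q)} \lesssim \|u\|_{L^p(\R, \dot{B}^{\gamma-\gamma_{p,q}}_q)}$. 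Chaining the two bounds gives $(\ref{usual homogeneous strichartz schrodinger})$; note the right-hand side is already in Sobolev form, so the data terms are left untouched.

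For the second estimate $(\ref{usual inhomogeneous strichartz schrodinger})$, I would start from $(\ref{inhomogeneous full strichartz schrodinger})$, namely $\|u\|_{L^\infty(\R, \dot{B}^{\gamma_{p,q}}_2)} + \|u\|_{L^p(\R, \dot{B}^0_q)} \lesssim \|\varphi\|_{\dot{H}^{\gamma_{p,q}}} + \|F\|_{L^{a'}(\R, \dot{B}^0_{b'})}$, which is available precisely because the gap condition $(\ref{gap condition})$ is assumed. On the left I would use the identity $\dot{B}^{\gamma_{p,q}}_2 = \dot{H}^{\gamma_{p,q}}$ for the first term and the inclusion $\dot{B}^0_q \subset \dot{H}^0_q = L^q$ (again valid for $2 \leq q < \infty$) for the second. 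On the right I would pass from the Besov to the Lebesgue norm of the forcing in the reverse direction: since $b<\infty$ and $b \geq 2$, the conjugate exponent satisfies $1 < b' \leq 2$, so $L^{b'} = \dot{H}^0_{b'} \subset \dot{B}^0_{b'}$ gives $\|F\|_{\dot{B}^0_{b'}} \lesssim \|F\|_{L^{b'}}$ at each time, hence $\|F\|_{L^{a'}(\R, \dot{B}^0_{b'})} \lesssim \|F\|_{L^{a'}(\R, L^{b'})}$. Combining these three replacements produces $(\ref{usual inhomogeneous strichartz schrodinger})$.

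There is no substantive obstacle, since all the analytic content already lies in Theorem \ref{theorem full strichartz schrodinger}; the only point requiring care is bookkeeping the direction of the Besov--Sobolev inclusions, which run Besov-into-Sobolev on the solution side (exponents $\geq 2$) and Lebesgue-into-Besov on the forcing side (conjugate exponent $\leq 2$). This is exactly why the corollary imposes $q<\infty$ and $b<\infty$: at $q=\infty$ the inclusion $\dot{B}^\gamma_q \subset \dot{H}^\gamma_q$ is unavailable, and at $b=\infty$ one has $b'=1$, for which the reverse inclusion $\dot{H}^0_{b'} \subset \dot{B}^0_{b'}$ (valid only for $1<b'\leq 2$) fails, so in either endpoint case the translation from Besov to Sobolev spaces breaks down.
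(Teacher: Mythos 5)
Your proposal is correct and coincides with the paper's (implicit) derivation: the paper states Corollary \ref{coro usual strichartz schrodinger} without proof precisely because it follows from $(\ref{homogeneous full strichartz schrodinger})$ and $(\ref{inhomogeneous full strichartz schrodinger})$ combined with the embeddings $\dot{B}^\gamma_q \subset \dot{H}^\gamma_q$ for $2\leq q<\infty$, $\dot{H}^\gamma_{b'} \subset \dot{B}^\gamma_{b'}$ for $1<b'\leq 2$, and $\dot{B}^{\gamma}_2=\dot{H}^{\gamma}$, exactly as you use them. Your bookkeeping of the directions of these inclusions, and of why $q<\infty$ and $b<\infty$ are required, is accurate.
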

\begin{coro} \label{coro local strichartz schrodinger}
Let $d \geq 1$, $\sigma \in (0,\infty)\backslash \{1\}$, $\gamma \geq 0$ and $I$ a bounded interval. If $u$ is a (weak) solution to the linear fractional Schr\"odinger equation for some data $\varphi, F$, then for all $(p,q)$ admissible satisfying $q <\infty$,
\begin{align}
\|u\|_{L^p(I,H^{\gamma-\gamma_{p,q}}_q)} \lesssim \|\varphi\|_{H^\gamma} + \|F\|_{L^1(I, H^\gamma)}. \label{local strichartz schrodinger} 
\end{align}
\end{coro}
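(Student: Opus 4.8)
The plan is to decompose $u = P_0 u + (1-P_0)u$ and to control the two pieces by genuinely different mechanisms, since the gap between the inhomogeneous norm $H^{\gamma-\gamma_{p,q}}_q$ and the homogeneous norm $\dot{H}^{\gamma-\gamma_{p,q}}_q$ lives entirely at low frequencies. First, for the high-frequency part I would set $s := \gamma - \gamma_{p,q}$ and note that on the support of $1-\chi_0$, i.e. $\{|\xi|\ge 1\}$, the symbols $(1-\chi_0(\xi))\scal{\xi}^s|\xi|^{-s}$ and $1-\chi_0(\xi)$ are Mikhlin--H\"ormander multipliers, hence bounded on $L^q$ for $2\le q<\infty$; this gives, pointwise in $t$, the bound $\|(1-P_0)u\|_{H^s_q} \lesssim \|\Lambda^s u\|_{L^q} = \|u\|_{\dot{H}^s_q}$. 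Applying the homogeneous Strichartz estimate $(\ref{usual homogeneous strichartz schrodinger})$ at regularity level $\gamma$ (after extending $F$ by zero outside $I$), together with $|\xi|^\gamma\le\scal{\xi}^\gamma$ for $\gamma\ge 0$, then yields $\|(1-P_0)u\|_{L^p(I,H^s_q)} \lesssim \|\varphi\|_{H^\gamma}+\|F\|_{L^1(I,H^\gamma)}$.

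For the low-frequency part I would use that $\scal{\xi}^s\chi_0(\xi)$ is smooth and compactly supported, so that $\|P_0 u\|_{H^s_q}\lesssim\|P_0 u\|_{L^q}$ pointwise in time; since $P_0 u$ has Fourier support in $\{|\xi|\le 2\}$, Bernstein's inequality gives $\|P_0u\|_{L^q}\lesssim\|P_0 u\|_{L^2}\le\|u\|_{L^2}$ for $q\ge 2$. Because $I$ is bounded, H\"older in time converts this into $\|P_0 u\|_{L^p(I,H^s_q)}\lesssim |I|^{1/p}\|u\|_{L^\infty(I,L^2)}$, and the $L^2$ energy estimate $(\ref{energy estimate})$ together with the Duhamel formula and Minkowski's inequality bounds $\|u\|_{L^\infty(I,L^2)}\le\|\varphi\|_{L^2}+\|F\|_{L^1(I,L^2)}$, both of which are in turn controlled by $\|\varphi\|_{H^\gamma}+\|F\|_{L^1(I,H^\gamma)}$ for $\gamma \ge 0$. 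Summing the two contributions gives $(\ref{local strichartz schrodinger})$.

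I expect the main obstacle to be the low-frequency piece rather than the high-frequency one. The scale-invariant estimate $(\ref{usual homogeneous strichartz schrodinger})$ only controls $u$ in $\dot{H}^{s}_q$, and this homogeneous norm is not comparable to $L^q$ near $\xi=0$, so it carries no usable information about $P_0 u$ in $L^q$. This is exactly the point at which the finiteness of $|I|$ must enter, through Bernstein's inequality and the energy estimate, and it also explains why the implicit constant in $(\ref{local strichartz schrodinger})$ is allowed to depend on $|I|$ (via the factor $|I|^{1/p}$) and why the hypothesis $q<\infty$ is needed, so that the multiplier and Bernstein bounds are available.
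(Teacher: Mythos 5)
Your proof is correct and is essentially the paper's own argument: both rest on a low/high frequency splitting at unit scale, with the high-frequency piece controlled by the scale-invariant Strichartz estimates $(\ref{usual homogeneous strichartz schrodinger})$--$(\ref{usual inhomogeneous strichartz schrodinger})$ and the low-frequency piece controlled by an $L^2$ (unitarity) bound combined with H\"older in time over the bounded interval $I$ --- precisely the two points where, as you observe, $q<\infty$ and $|I|<\infty$ are needed. The only deviations are organizational: the paper first disposes of the Duhamel term by Minkowski's inequality and then decomposes the free propagator (treating the cases $\gamma_{p,q}\geq 0$ and $\gamma_{p,q}<0$ separately), while you decompose the solution $u$ itself and replace the paper's Sobolev embedding and functional-calculus steps by Bernstein and Mikhlin--H\"ormander multiplier bounds; these variants are interchangeable in substance.
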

\begin{rem} \label{rem usual strichartz schrodinger sigma leq 2}
When $\sigma \in (0,2]\backslash\{1\}$, one can obtain the following global-in-time Strichartz estimate
\[
\|u\|_{L^p(\R,H^{\gamma-\gamma_{p,q}}_q)} \lesssim \|\varphi\|_{H^\gamma} + \|F\|_{L^1(\R, H^\gamma)}.
\]
It is valid for all $\gamma \in \R$.
\end{rem}
\noindent \textit{Proof of Corollary $\ref{coro local strichartz schrodinger}$.} We firstly note that when $\gamma_{p,q}\geq 0$ (or at least $\sigma \in (0,2] \backslash \{1\}$), we can obtain $(\ref{local strichartz schrodinger})$ for any $\gamma \in \R$ and $I = \R$. To see this, we write $\|u\|_{L^p(\R,H^{\gamma-\gamma_{p,q}}_q)}= \|\scal{D}^{\gamma-\gamma_{p,q}} u\|_{L^p(\R,L^q)}$ and use $(\ref{usual homogeneous strichartz schrodinger})$ with $\gamma=\gamma_{p,q}$ to obtain
\[
\|u\|_{L^p(\R,H^{\gamma-\gamma_{p,q}}_q)}\lesssim \|\scal{D}^{\gamma-\gamma_{p,q}} \varphi\|_{\dot{H}^{\gamma_{p,q}}} + \|\scal{D}^{\gamma-\gamma_{p,q}} F\|_{L^1(\R, \dot{H}^{\gamma_{p,q}})}.
\]
This gives the claim since $\|v\|_{\dot{H}^{\gamma_{p,q}}}\leq \|v\|_{H^{\gamma_{p,q}}}$ using that $\gamma_{p,q} \geq 0$. It remains to treat the case $\gamma_{p,q} <0$. By the Minkowski inequality and the unitary of $e^{-it\Lambda^\sigma}$ in $L^2$, the estimate $(\ref{local strichartz schrodinger})$ is proved if we can show for $\gamma \geq 0$, $I \subset \R$ a bounded interval and all $(p,q)$ admissible with $q<\infty$ that 
\begin{align}
\|e^{-it\Lambda^\sigma} \varphi \|_{L^p(I, H^{\gamma-\gamma_{p,q}}_q)} \lesssim \|\varphi\|_{H^\gamma}. \label{local strichartz schrodinger reduction}
\end{align}
Indeed, if we have $(\ref{local strichartz schrodinger reduction})$, then
\begin{align*}
\Big|\Big|\int_0^t e^{-i(t-s)\Lambda^\sigma} F(s)ds \Big|\Big|_{L^p(I, H^{\gamma-\gamma_{p,q}}_q)} &\leq \int_I \|\mathds{1}_{[0,t]}(s) e^{-i(t-s)\Lambda^\sigma} F(s)\|_{L^p(I, H^{\gamma-\gamma_{p,q}}_q)}ds  \\
&\leq \int_I \| e^{-i(t-s)\Lambda^\sigma} F(s) \|_{L^p(I, H^{\gamma-\gamma_{p,q}}_q)}ds \\
&\lesssim \int_I \|F(s)\|_{H^\gamma} ds=\|F\|_{L^1(I,H^\gamma)}.
\end{align*}
We now prove $(\ref{local strichartz schrodinger reduction})$. To do so, we write
\[
\scal{D}^{\gamma-\gamma_{p,q}}e^{-it\Lambda^\sigma} \varphi = \psi(D) \scal{D}^{\gamma-\gamma_{p,q}} e^{-it\Lambda^\sigma} \varphi + (1-\psi)(D)\scal{D}^{\gamma-\gamma_{p,q}} e^{-it\Lambda^\sigma} \varphi,
\]
for some $\psi \in C^\infty_0(\R^d)$ valued in $[0,1]$ and equal to 1 near the origin. For the first term, the Sobolev embedding implies
\[
\|\psi(D) \scal{D}^{\gamma-\gamma_{p,q}} e^{-it\Lambda^\sigma} \varphi\|_{L^q} \lesssim \|\psi(D) \scal{D}^{\gamma-\gamma_{p,q}} e^{-it\Lambda^\sigma} \varphi\|_{H^\delta},
\]
for some $\delta >d/2-d/q$. Thanks to the support of $\psi$ and the unitary property of $e^{-it\Lambda^\sigma}$ in $L^2$, we get
\[
\|\psi(D) \scal{D}^{\gamma-\gamma_{p,q}} e^{-it\Lambda^\sigma} \varphi\|_{L^p(I, L^q)} \lesssim \|\varphi\|_{L^2} \lesssim \|\varphi\|_{H^\gamma}.
\]
Here the boundedness of $I$ is crucial to have the first estimate. For the second term, using $(\ref{usual inhomogeneous strichartz schrodinger})$, we obtain
\[
\|(1-\psi)(D) \scal{D}^{\gamma-\gamma_{p,q}} e^{-it\Lambda^\sigma} \varphi\|_{L^p(I, L^q)} \lesssim \|(1-\psi)(D)\scal{D}^{\gamma-\gamma_{p,q}} \varphi\|_{\dot{H}^{\gamma_{p,q}}} \lesssim \|\varphi\|_{H^\gamma}.
\]
Combining the two terms, we have $(\ref{local strichartz schrodinger reduction})$. This completes the proof. 
\defendproof
\begin{coro} \label{coro full strichartz wave}
Let $d\geq 1, \sigma \in (0,\infty)\backslash \{1\}, \gamma \in \R$ and a (weak) solution to the linear fractional wave equation, namely
\[
v(t)= \cos (t\Lambda^\sigma) \varphi + \frac{\sin (t\Lambda^\sigma)}{\Lambda^\sigma} \phi + \int_0^t \frac{\sin((t-s)\Lambda^\sigma)}{\Lambda^\sigma}G(s)ds, 
\]
for some data $\varphi, \phi, G$. Then for all $(p,q)$ and $(a,b)$ admissible pairs,
\begin{align}
\|[v]\|_{L^p(\R,\dot{B}^\gamma_{q})} \lesssim \|[v](0)\|_{\dot{H}^{\gamma+\gamma_{p,q}}} + \|G\|_{L^{a'}(\R,\dot{B}^{\gamma+\gamma_{p,q}-\gamma_{a',b'}-2\sigma}_{b'})}, \label{full strichartz wave}
\end{align}
where 
\begin{align*}
\|[v]\|_{L^p(\R,\dot{B}^\gamma_{q})} &:=\|v\|_{L^p(\R,\dot{B}^\gamma_{q})} + \|\partial_t v\|_{L^p(\R,\dot{B}^{\gamma-\sigma}_{q})}; \\
\|[v](0)\|_{\dot{H}^{\gamma+\gamma_{p,q}}}&:=\|\varphi\|_{\dot{H}^{\gamma+\gamma_{p,q}}} +\|\phi\|_{\dot{H}^{\gamma+\gamma_{p,q}-\sigma}}.
\end{align*}
In particular, 
\begin{align}
\|[v]\|_{L^p(\R,\dot{B}^{\gamma-\gamma_{p,q}}_{q})} \lesssim \|[v](0)\|_{\dot{H}^\gamma} + \|G\|_{L^1(\R,\dot{H}^{\gamma-\sigma})}, \label{homogeneous strichartz wave}
\end{align}
and 
\begin{align}
\|[v]\|_{L^\infty(\R,\dot{B}^{\gamma_{p,q}}_{2})}+\|[v]\|_{L^p(\R,\dot{B}^0_{q})} \lesssim \|[v](0)\|_{\dot{H}^{\gamma_{p,q}}} + \|G\|_{L^{a'}(\R,\dot{B}^0_{b'})}, \label{inhomogeneous strichartz wave}
\end{align}
provided that 
\begin{align}
\gamma_{p,q}= \gamma_{a',b'}+2\sigma. \label{gap condition wave}
\end{align}
\end{coro}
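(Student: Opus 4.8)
The plan is to reduce everything to the fractional Schr\"odinger estimates of Theorem \ref{theorem full strichartz schrodinger} via the half-wave decomposition, observing that the half-wave propagators $e^{\pm it\Lambda^\sigma}$ coincide with the fractional Schr\"odinger propagator (the dispersive bound $(\ref{dispersive estimate})$ depends only on $|t|$, hence is insensitive to the sign in the exponent, so $(\ref{strichartz reduction 1})$ and $(\ref{strichartz reduction 2})$ hold for both signs). Using Euler's formula I write
\begin{align*}
\cos(t\Lambda^\sigma) &= \tfrac12\big(e^{it\Lambda^\sigma}+e^{-it\Lambda^\sigma}\big), & \frac{\sin(t\Lambda^\sigma)}{\Lambda^\sigma} &= \frac{1}{2i}\Lambda^{-\sigma}\big(e^{it\Lambda^\sigma}-e^{-it\Lambda^\sigma}\big),
\end{align*}
and similarly for the retarded kernel $\sin((t-s)\Lambda^\sigma)/\Lambda^\sigma$. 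Each resulting piece is then either $e^{\pm it\Lambda^\sigma}$ applied to data or a retarded Duhamel integral $\int_0^t e^{\pm i(t-s)\Lambda^\sigma}G(s)\,ds$ of exactly the form handled in $(\ref{full strichartz schrodinger})$, possibly preceded by a power of $\Lambda$. Since $\Lambda^\tau$ is a Fourier multiplier commuting with the propagators and shifting the homogeneous Besov index by $\tau$, i.e. $\|\Lambda^\tau f\|_{\dot B^\gamma_q}\approx\|f\|_{\dot B^{\gamma+\tau}_q}$, all the estimates will follow by bookkeeping of indices.

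For the homogeneous part I would treat $v$ and $\partial_t v$ separately. The term $\cos(t\Lambda^\sigma)\varphi$ in $\dot B^\gamma_q$ is controlled by $\|\varphi\|_{\dot H^{\gamma+\gamma_{p,q}}}$ directly from $(\ref{full strichartz schrodinger})$, while the $\Lambda^{-\sigma}\sin$-type piece acting on $\phi$, measured in $\dot B^\gamma_q$, is the same as measuring the half-waves of $\phi$ in $\dot B^{\gamma-\sigma}_q$, giving $\|\phi\|_{\dot H^{\gamma+\gamma_{p,q}-\sigma}}$; these are exactly the two summands of $\|[v](0)\|_{\dot H^{\gamma+\gamma_{p,q}}}$. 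For $\partial_t v$ measured in $\dot B^{\gamma-\sigma}_q$ one uses $\partial_t\cos(t\Lambda^\sigma)=-\Lambda^\sigma\sin(t\Lambda^\sigma)$ and $\partial_t\big(\Lambda^{-\sigma}\sin(t\Lambda^\sigma)\big)=\cos(t\Lambda^\sigma)$: the extra $\Lambda^\sigma$ in the first case cancels the $\gamma-\sigma$ offset and again produces the bounds $\|\varphi\|_{\dot H^{\gamma+\gamma_{p,q}}}$ and $\|\phi\|_{\dot H^{\gamma+\gamma_{p,q}-\sigma}}$.

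For the inhomogeneous part, splitting the retarded kernel gives $\int_0^t\frac{\sin((t-s)\Lambda^\sigma)}{\Lambda^\sigma}G\,ds=\Lambda^{-\sigma}\cdot\frac{1}{2i}\big(V_+-V_-\big)$ with $V_\pm(t)=\int_0^t e^{\pm i(t-s)\Lambda^\sigma}G(s)\,ds$. Measuring this in $\dot B^\gamma_q$ amounts to measuring $V_\pm$ in $\dot B^{\gamma-\sigma}_q$, and $(\ref{full strichartz schrodinger})$ then bounds it by $\|G\|_{L^{a'}(\dot B^{(\gamma-\sigma)+\gamma_{p,q}-\gamma_{a',b'}-\sigma}_{b'})}=\|G\|_{L^{a'}(\dot B^{\gamma+\gamma_{p,q}-\gamma_{a',b'}-2\sigma}_{b'})}$, which is the forcing term of $(\ref{full strichartz wave})$; the index shift $\sigma$ coming from $\Lambda^{-\sigma}$ in the kernel, on top of the intrinsic $\sigma$ loss already present in the Schr\"odinger Duhamel bound, is what turns the gap $\gamma_{p,q}=\gamma_{a',b'}+\sigma$ of $(\ref{gap condition})$ into the $2\sigma$ gap $(\ref{gap condition wave})$. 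Differentiating the Duhamel integral in $t$, the boundary term vanishes because $\sin(0)=0$, leaving $\partial_t V=\tfrac12(V_++V_-)$, whose $\dot B^{\gamma-\sigma}_q$ norm lands on the same forcing index. Summing the homogeneous and inhomogeneous contributions gives $(\ref{full strichartz wave})$, and the special estimates $(\ref{homogeneous strichartz wave})$ and $(\ref{inhomogeneous strichartz wave})$ follow by the same specializations used for the Schr\"odinger case.

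The main obstacle I anticipate is purely the index bookkeeping: one must keep straight the regularity shift caused by the multiplier $\Lambda^{-\sigma}$ in the $\phi$-data and in the Duhamel kernel together with the $\Lambda^{\sigma}$ produced by $\partial_t$, and check that both the $v$- and $\partial_t v$-components of $\|[v]\|$ land on the \emph{same} data and forcing indices. The one genuinely non-algebraic point is justifying differentiation under the retarded integral and the vanishing of its boundary term; everything else is a direct transcription of Theorem \ref{theorem full strichartz schrodinger}.
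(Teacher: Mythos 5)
Your proposal is correct and follows essentially the same route as the paper: the paper's proof of this corollary is precisely the reduction to Theorem \ref{theorem full strichartz schrodinger} via the identities $\cos(t\Lambda^\sigma)=\frac{1}{2}\left(e^{it\Lambda^\sigma}+e^{-it\Lambda^\sigma}\right)$ and $\sin(t\Lambda^\sigma)=\frac{1}{2i}\left(e^{it\Lambda^\sigma}-e^{-it\Lambda^\sigma}\right)$, with the $\Lambda^{-\sigma}$ in the kernel accounting for the shift from the gap $\gamma_{p,q}=\gamma_{a',b'}+\sigma$ to $\gamma_{p,q}=\gamma_{a',b'}+2\sigma$. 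Your write-up merely makes explicit the index bookkeeping and the time-differentiation of the Duhamel term that the paper leaves to the reader.
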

\begin{proof}
It follows easily from Theorem $\ref{theorem full strichartz schrodinger}$ and the fact that
\[
\cos (t\Lambda^\sigma) = \frac{e^{it\Lambda^\sigma}+e^{-it\Lambda^\sigma}}{2}, \quad \sin (t\Lambda^\sigma) = \frac{e^{it\Lambda^\sigma}-e^{-it\Lambda^\sigma}}{2i}.
\]
\end{proof}
As in Corollary $\ref{coro usual strichartz schrodinger}$, we have the following usual Strichartz estimates for the fractional wave equation.
\begin{coro} \label{coro usual strichartz wave}
Let $d\geq 1, \sigma \in (0,\infty)\backslash \{1\}, \gamma \in \R$. If $v$ is a (weak) solution to the linear fractional wave equation for some data $\varphi, \phi, G$, then for all $(p,q)$ and $(a,b)$ admissible satisfying $q<\infty, b<\infty$ and $(\ref{gap condition wave})$, 
\begin{align}
\|v\|_{L^p(\R,\dot{H}^{\gamma-\gamma_{p,q}}_q)} &\lesssim \|[v](0)\|_{\dot{H}^\gamma} + \|G\|_{L^1(\R,\dot{H}^{\gamma-\sigma})}, \label{usual homogeneous strichartz wave} \\
\|[v]\|_{L^\infty(\R,\dot{H}^{\gamma_{p,q}})}+ \|v\|_{L^p(\R, L^q)} &\lesssim \|[v](0)\|_{\dot{H}^{\gamma_{p,q}}} + \|G\|_{L^{a'}(\R,L^{b'})}. \label{usual inhomogeneous strichartz wave}
\end{align}
\end{coro}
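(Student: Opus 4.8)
The plan is to deduce both estimates directly from the full Besov-space Strichartz estimates of Corollary~\ref{coro full strichartz wave}, exactly as one passes from Theorem~\ref{theorem full strichartz schrodinger} to Corollary~\ref{coro usual strichartz schrodinger} in the Schr\"odinger case. The only extra ingredient is the chain of embeddings between homogeneous Besov and Sobolev spaces recalled in the introduction: for $2\leq q<\infty$ one has $\dot{B}^\gamma_q \subset \dot{H}^\gamma_q$, the reverse inclusion $\dot{H}^\gamma_r \subset \dot{B}^\gamma_r$ holds for $1<r\leq 2$, and $\dot{B}^\gamma_2=\dot{H}^\gamma$ for every $\gamma$. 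Since $(p,q)$ and $(a,b)$ are admissible, we automatically have $q,b\in[2,\infty]$; the hypotheses $q<\infty$ and $b<\infty$ are precisely what place the relevant exponents in the ranges where these inclusions apply.

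For the first estimate, I would start from the homogeneous bound~(\ref{homogeneous strichartz wave}). By definition of the bracket norm, $\|v\|_{L^p(\R,\dot{B}^{\gamma-\gamma_{p,q}}_q)}\leq \|[v]\|_{L^p(\R,\dot{B}^{\gamma-\gamma_{p,q}}_q)}$, so it suffices to replace the Besov norm on the left by the Sobolev one. Because $2\leq q<\infty$, the forward embedding gives $\|v\|_{L^p(\R,\dot{H}^{\gamma-\gamma_{p,q}}_q)}\lesssim \|v\|_{L^p(\R,\dot{B}^{\gamma-\gamma_{p,q}}_q)}$, and combining with~(\ref{homogeneous strichartz wave}) yields~(\ref{usual homogeneous strichartz wave}). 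No condition on the data side is needed here since the right-hand side of~(\ref{homogeneous strichartz wave}) is already expressed in Sobolev norms.

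For the second estimate, I would instead use the inhomogeneous bound~(\ref{inhomogeneous strichartz wave}) under the gap condition~(\ref{gap condition wave}). On the left, the identity $\dot{B}^{\gamma_{p,q}}_2=\dot{H}^{\gamma_{p,q}}$ (applied also at regularity $\gamma_{p,q}-\sigma$ for the time-derivative component of the bracket) turns $\|[v]\|_{L^\infty(\R,\dot{B}^{\gamma_{p,q}}_2)}$ into $\|[v]\|_{L^\infty(\R,\dot{H}^{\gamma_{p,q}})}$ with no loss, while the forward embedding $\dot{B}^0_q\subset\dot{H}^0_q=L^q$ (valid as $2\leq q<\infty$) controls $\|v\|_{L^p(\R,L^q)}$ by $\|[v]\|_{L^p(\R,\dot{B}^0_q)}$. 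On the right, I would convert the source term by the reverse embedding: since $b<\infty$ and $b\geq 2$ by admissibility, the conjugate exponent satisfies $1<b'\leq 2$, so $\dot{H}^0_{b'}\subset \dot{B}^0_{b'}$ gives $\|G\|_{\dot{B}^0_{b'}}\lesssim \|G\|_{L^{b'}}$ and hence $\|G\|_{L^{a'}(\R,\dot{B}^0_{b'})}\lesssim \|G\|_{L^{a'}(\R,L^{b'})}$. Feeding these three replacements into~(\ref{inhomogeneous strichartz wave}) produces~(\ref{usual inhomogeneous strichartz wave}).

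There is no genuine obstacle: the argument is pure bookkeeping with the Littlewood--Paley characterization of the spaces, and Corollary~\ref{coro full strichartz wave} already carries all the analytic content (dispersion, $TT^*$, scaling). The one point that requires care is matching the direction of each inclusion to the side of the inequality on which it is used---forward inclusions ($\dot{B}\subset\dot{H}$, needing $q\geq 2$) on the solution side, and the reverse inclusion ($\dot{H}\subset\dot{B}$, needing $1<b'\leq 2$) on the forcing side---which is exactly why both $q<\infty$ and $b<\infty$ are assumed.
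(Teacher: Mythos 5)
Your proposal is correct and matches the paper's (implicit) argument exactly: the paper gives no separate proof, simply remarking that the result follows ``as in Corollary~\ref{coro usual strichartz schrodinger}'', i.e.\ by specializing the Besov-space estimates of Corollary~\ref{coro full strichartz wave} and converting norms via $\dot{B}^\gamma_2=\dot{H}^\gamma$, the embedding $\dot{B}^\gamma_q \subset \dot{H}^\gamma_q$ for $2\leq q<\infty$ on the solution side, and the reverse embedding $\dot{H}^0_{b'}\subset \dot{B}^0_{b'}$ for $1<b'\leq 2$ on the source side. Your bookkeeping, including the role of the hypotheses $q<\infty$ and $b<\infty$ and the treatment of the bracket norm at regularities $\gamma_{p,q}$ and $\gamma_{p,q}-\sigma$, is precisely the content the paper leaves implicit.
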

The following result, which is similar to Corollary $\ref{coro local strichartz schrodinger}$, gives the local Strichartz estimates for the fractional wave equation. 
\begin{coro} \label{coro local strichartz wave}
Let $d \geq 1$, $\sigma \in (0,\infty)\backslash \{1\}$, $\gamma \geq 0$ and $I \subset \R$ a bounded interval. If $v$ is a (weak) solution to the linear fractional wave equation for some data $\varphi, \phi, G$, then for all $(p,q)$ admissible satisfying $q <\infty$,
\begin{align}
\|v\|_{L^p(I,H^{\gamma-\gamma_{p,q}}_q)} \lesssim \|[v](0)\|_{H^\gamma} + \|G\|_{L^1(I, H^{\gamma-\sigma})}. \label{local strichartz wave}
\end{align}
\end{coro}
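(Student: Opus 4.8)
The plan is to transcribe the proof of Corollary~\ref{coro local strichartz schrodinger} to the wave setting, the only genuinely new ingredients being the presence of two pieces of Cauchy data and the inverse power $\Lambda^{-\sigma}$ in the propagator $\sin(t\Lambda^\sigma)/\Lambda^\sigma$. First I would reduce the full inhomogeneous bound to purely homogeneous ones. As in the Schr\"odinger proof, Minkowski's inequality applied to the Duhamel term $\int_0^t \frac{\sin((t-s)\Lambda^\sigma)}{\Lambda^\sigma}G(s)\,ds$ lets me move the $L^p(I,H^{\gamma-\gamma_{p,q}}_q)$ norm inside the $s$-integral; for each fixed $s$ the integrand is the free evolution of the single datum $\phi=G(s)$, so the homogeneous estimate below yields $\|G(s)\|_{H^{\gamma-\sigma}}$ and integration in $s$ produces exactly $\|G\|_{L^1(I,H^{\gamma-\sigma})}$. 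It therefore suffices to prove
\[
\|\cos(t\Lambda^\sigma)\varphi\|_{L^p(I,H^{\gamma-\gamma_{p,q}}_q)}\lesssim\|\varphi\|_{H^\gamma},\qquad \Big\|\tfrac{\sin(t\Lambda^\sigma)}{\Lambda^\sigma}\phi\Big\|_{L^p(I,H^{\gamma-\gamma_{p,q}}_q)}\lesssim\|\phi\|_{H^{\gamma-\sigma}},
\]
for every admissible $(p,q)$ with $q<\infty$ and every $\gamma\geq 0$; summing these and recalling $\|[v](0)\|_{H^\gamma}=\|\varphi\|_{H^\gamma}+\|\phi\|_{H^{\gamma-\sigma}}$ then gives $(\ref{local strichartz wave})$.

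For each homogeneous bound I would split $\scal{D}^{\gamma-\gamma_{p,q}}(\cdot)=\psi(D)\scal{D}^{\gamma-\gamma_{p,q}}(\cdot)+(1-\psi)(D)\scal{D}^{\gamma-\gamma_{p,q}}(\cdot)$ with $\psi\in C^\infty_0(\R^d)$ valued in $[0,1]$ and equal to $1$ near the origin, exactly as in Corollary~\ref{coro local strichartz schrodinger}. On the high-frequency piece $|\xi|\sim\scal{\xi}$, so homogeneous and inhomogeneous weights are comparable and I can invoke the global estimate $(\ref{usual inhomogeneous strichartz wave})$ with $G=0$: the $\cos$ term is the wave evolution of data $\big((1-\psi)(D)\scal{D}^{\gamma-\gamma_{p,q}}\varphi,\,0\big)$, controlled by $\|(1-\psi)(D)\scal{D}^{\gamma-\gamma_{p,q}}\varphi\|_{\dot{H}^{\gamma_{p,q}}}\lesssim\|\varphi\|_{H^\gamma}$, while the $\sin/\Lambda^\sigma$ term is the evolution of $\big(0,\,(1-\psi)(D)\scal{D}^{\gamma-\gamma_{p,q}}\phi\big)$, controlled by $\|(1-\psi)(D)\scal{D}^{\gamma-\gamma_{p,q}}\phi\|_{\dot{H}^{\gamma_{p,q}-\sigma}}\lesssim\|\phi\|_{H^{\gamma-\sigma}}$, both using that $|\xi|^{s}\lesssim\scal{\xi}^{s}$ for any real $s$ on the support of $1-\psi$. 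For the low-frequency piece I would use, as before, the Sobolev embedding $\|\psi(D)f\|_{L^q}\lesssim\|\psi(D)f\|_{H^\delta}$ for some $\delta>d/2-d/q$, then the equivalence of all Sobolev norms on the fixed compact frequency support of $\psi$ to reduce to $\|\psi(D)f\|_{L^2}$, together with the $L^2$-boundedness of the propagators: here one uses $|\cos(t|\xi|^\sigma)|\leq1$ and $|\sin(t|\xi|^\sigma)|\,|\xi|^{-\sigma}\leq|t|$, the boundedness of $I$ making $\sup_{t\in I}|t|$ finite, and the boundedness of $\scal{\xi}^{\gamma-\gamma_{p,q}}$ on $\{|\xi|\lesssim1\}$; taking the $L^p(I)$ norm then costs only a factor $|I|^{1/p}$.

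The step I expect to demand the most care is the low-frequency analysis of $\sin(t\Lambda^\sigma)/\Lambda^\sigma$. As a homogeneous multiplier $\Lambda^{-\sigma}$ is singular at $\xi=0$, whereas the full propagator has symbol $\sin(t|\xi|^\sigma)/|\xi|^\sigma$, which is in fact bounded by $|t|$ near the origin; the estimate hinges on using the boundedness of $I$ to absorb this factor $|t|$ and on the observation that on the compact set $\{\xi:\psi(\xi)\neq0\}$ the weight $\scal{\xi}^{\gamma-\sigma}$ is comparable to a positive constant even when the exponent $\gamma-\sigma$ is negative, so that the negative-regularity datum norm $\|\phi\|_{H^{\gamma-\sigma}}$ still controls the low-frequency $L^2$ mass $\|\psi(D)\phi\|_{L^2}$. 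Away from this point, every estimate is a verbatim copy of the Schr\"odinger argument, so no further obstacle should arise.
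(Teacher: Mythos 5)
Your proposal is correct and follows essentially the same route as the paper's own proof: the Minkowski reduction to the two homogeneous bounds $(\ref{reduction local strichartz fractional wave 1})$--$(\ref{reduction local strichartz fractional wave 2})$, the $\psi(D)$ low/high frequency splitting, and at low frequency the bound $\big\|\frac{\sin(t\Lambda^\sigma)}{\Lambda^\sigma}\big\|_{L^2\rightarrow L^2}\leq |t|$ absorbed by the boundedness of $I$, with $\scal{\xi}^{\gamma-\sigma}$ comparable to a positive constant on $\text{supp}(\psi)$. The only cosmetic difference is at high frequency, where you invoke the global wave estimate $(\ref{usual inhomogeneous strichartz wave})$ with $G=0$ together with $|\xi|\sim\scal{\xi}$ on $\text{supp}(1-\psi)$, whereas the paper writes the propagators via $e^{\pm it\Lambda^\sigma}$ and reuses the Schr\"odinger reduction $(\ref{local strichartz schrodinger reduction})$ plus the functional-calculus bound $\|(1-\psi)(D)\scal{D}^{\sigma}\Lambda^{-\sigma}\|_{L^2\rightarrow L^2}\lesssim 1$; the two are interchangeable.
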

\begin{proof}
The proof is similar to the one of Corollary $\ref{coro local strichartz schrodinger}$. Thanks to the Minkowski inequality, it suffices to prove for all $\gamma \geq 0$, all $I \subset \R$ bounded interval and all $(p,q)$ admissible pair with $q<\infty$,
\begin{align}
\|\cos (t\Lambda^\sigma) \varphi \|_{L^p(I, H^{\gamma-\gamma_{p,q}}_q)} & \lesssim  \|\varphi\|_{H^\gamma}, \label{reduction local strichartz fractional wave 1} \\
\Big|\Big| \frac{\sin (t\Lambda^\sigma)}{\Lambda^\sigma} \phi \Big|\Big|_{L^p(I,H^{\gamma-\gamma_{p,q}}_q)} & \lesssim  \|\phi\|_{H^{\gamma-\sigma}}. \label{reduction local strichartz fractional wave 2}
\end{align}
The estimate $(\ref{reduction local strichartz fractional wave 1})$ follows from the ones of $e^{\pm it\Lambda^\sigma}$. We will give the proof of $(\ref{reduction local strichartz fractional wave 2})$. To do this, we write
\[
\scal{D}^{\gamma-\gamma_{p,q}}\frac{\sin (t\Lambda^\sigma)}{\Lambda^\sigma} = \psi(D) \scal{D}^{\gamma-\gamma_{p,q}}\frac{\sin (t\Lambda^\sigma)}{\Lambda^\sigma} + (1-\psi)(D)\scal{D}^{\gamma-\gamma_{p,q}}\frac{\sin (t\Lambda^\sigma)}{\Lambda^\sigma},
\]
for some $\psi$ as in the proof of Corollary $\ref{coro local strichartz schrodinger}$. For the first term, the Sobolev embedding and the fact $\Big|\Big|\frac{\sin (t\Lambda^\sigma)}{\Lambda^\sigma} \Big|\Big|_{L^2\rightarrow L^2} \leq |t|$ imply
\[
\Big|\Big| \psi(D) \scal{D}^{\gamma-\gamma_{p,q}} \frac{\sin(t\Lambda^\sigma)}{\Lambda^\sigma} \phi\Big| \Big|_{L^q} \lesssim |t| \|\psi(D) \scal{D}^{\gamma+\delta-\gamma_{p,q}} \phi\|_{L^2},
\]
for some $\delta>d/2-d/q$. This gives
\[
\Big|\Big| \psi(D) \scal{D}^{\gamma-\gamma_{p,q}} \frac{\sin(t\Lambda^\sigma)}{\Lambda^\sigma} \phi\Big| \Big|_{L^p(I,L^q)} \lesssim \|\phi\|_{H^{\gamma-\sigma}}.
\]
Here we use the fact that $\|\psi(D) \scal{D}^{\delta+\sigma-\gamma_{p,q}}\|_{L^2\rightarrow L^2} \lesssim 1$. For the second term, we apply $(\ref{local strichartz schrodinger reduction})$ with the fact $\sin (t\Lambda^\sigma)=(e^{it\Lambda^\sigma}-e^{-it\Lambda^\sigma})/2i$ and get
\[
\Big|\Big| (1-\psi)(D) \scal{D}^{\gamma-\gamma_{p,q}} \frac{\sin(t\Lambda^\sigma)}{\Lambda^\sigma} \phi\Big| \Big|_{L^p(I,L^q)} \lesssim \|(1-\psi)(D) \Lambda^{-\sigma} \phi \|_{H^\gamma} \lesssim \|\phi\|_{H^{\gamma-\sigma}}.
\]
Here we use that $\|(1-\psi)(D) \scal{D}^{\sigma}\Lambda^{-\sigma}\|_{L^2\rightarrow L^2} \lesssim 1$ by functional calculus. Combining two terms, we have $(\ref{reduction local strichartz fractional wave 2})$. The proof is complete. 
\end{proof}
\section{Nonlinear estimates} \label{section nonlinear estimates}
\setcounter{equation}{0}
In this section, we recall some estimates related to the fractional derivatives of nonlinear operators. Let us start with the following Kato-Ponce inequality (or fractional Leibniz rule).
\begin{prop} \label{prop fractional leibniz rule}
Let $\gamma \geq 0, 1 < r < \infty$ and $1 < p_1, p_2, q_1, q_2 \leq \infty$ satisfying $\frac{1}{r}=\frac{1}{p_1}+\frac{1}{q_1}=\frac{1}{p_2}+\frac{1}{q_2}$. Then there exists $C=C(d,\gamma,r,p_1,q_1,p_2,q_2)>0$ such that for all $u,v \in \Sh$, 
\begin{align}
\|\Lambda^\gamma(uv)\|_{L^r} &\leq C \Big(\|\Lambda^\gamma u\|_{L^{p_1}} \|v\|_{L^{q_1}} + \|u\|_{L^{p_2}}\|\Lambda^\gamma v\|_{L^{q_2}} \Big), \label{leibniz rule homogeneous sobolev} \\
\|\scal{\Lambda}^\gamma(uv)\|_{L^r} &\leq C \Big(\|\scal{\Lambda}^\gamma u\|_{L^{p_1}} \|v\|_{L^{q_1}} + \|u\|_{L^{p_2}}\|\scal{\Lambda}^\gamma v\|_{L^{q_2}} \Big). \label{leibniz rule inhomogeneous sobolev} 
\end{align}
\end{prop}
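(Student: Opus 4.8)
The plan is to prove both inequalities by Littlewood--Paley analysis, first reducing the inhomogeneous estimate $(\ref{leibniz rule inhomogeneous sobolev})$ to the homogeneous one $(\ref{leibniz rule homogeneous sobolev})$. For $\gamma=0$ the inhomogeneous inequality is just Hölder, so assume $\gamma>0$. The norm equivalence $H^\gamma_r = L^r \cap \dot{H}^\gamma_r$ recalled in the introduction gives $\|\scal{\Lambda}^\gamma(uv)\|_{L^r} \lesssim \|uv\|_{L^r} + \|\Lambda^\gamma(uv)\|_{L^r}$; the first term is bounded by Hölder's inequality, $\|uv\|_{L^r}\leq \|u\|_{L^{p_i}}\|v\|_{L^{q_i}}$, and the second by $(\ref{leibniz rule homogeneous sobolev})$. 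Hence it suffices to establish the homogeneous estimate.

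For $(\ref{leibniz rule homogeneous sobolev})$ I would apply Bony's paraproduct decomposition relative to the projections $P_N$. Writing $S_N:=\sum_{L\leq N}P_L$ and $\tilde P_N:=\sum_{|\log_2(L/N)|\leq 3}P_L$, decompose
\[
uv = \sum_N (P_N u)(S_{N/8}v) + \sum_N (S_{N/8}u)(P_N v) + \sum_N (P_N u)(\tilde P_N v) =: \Pi_1+\Pi_2+\Pi_3.
\]
In $\Pi_1$ each summand has Fourier support in an annulus $\{|\xi|\sim N\}$ (high frequency on $u$, low on $v$), so the pieces are almost orthogonal and $\Lambda^\gamma$ acts essentially as multiplication by $N^\gamma$. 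By the Littlewood--Paley square-function theorem, followed by Hölder and the Fefferman--Stein maximal inequality (using $\sup_N|S_{N/8}v|\lesssim Mv$),
\[
\|\Lambda^\gamma \Pi_1\|_{L^r} \lesssim \Big\|\Big(\sum_N N^{2\gamma}|P_N u|^2\Big)^{1/2}\Big\|_{L^{p_1}}\,\|Mv\|_{L^{q_1}} \lesssim \|\Lambda^\gamma u\|_{L^{p_1}}\,\|v\|_{L^{q_1}},
\]
which is the first term on the right-hand side; the term $\Pi_2$ is symmetric and produces $\|u\|_{L^{p_2}}\|\Lambda^\gamma v\|_{L^{q_2}}$.

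The main obstacle is the resonant term $\Pi_3$, where both factors sit at frequency $\sim N$ but the output frequency may lie anywhere in $\{|\xi|\lesssim N\}$, so the power of $\Lambda^\gamma$ can no longer be read off from the output block. Here the hypothesis $\gamma\geq0$ is essential: decomposing by the output frequency $P_K$, only the scales $N\gtrsim K$ contribute, and since $K^\gamma\lesssim N^\gamma$ the derivative can be transferred onto the high-frequency factor $P_N u$ (or $\tilde P_N v$). Summing the resulting geometric series in $K\lesssim N$ and again invoking the square-function theorem, Hölder, and the maximal inequality bounds $\|\Lambda^\gamma\Pi_3\|_{L^r}$ by either of the two terms on the right. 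I expect the remaining delicate point to be the endpoint cases in which some $p_i$ or $q_i$ equals $\infty$, where the square-function characterization of $\|\Lambda^\gamma\,\cdot\,\|_{L^p}$ is unavailable and one must argue directly on the $L^\infty$ factor (or invoke the refined endpoint form of the Kato--Ponce inequality); for $1<p_i,q_i<\infty$ the scheme above is complete.
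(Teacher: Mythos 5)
First, note that the paper does not prove this proposition at all: it is stated as a known result with a pointer to \cite{GrafakosOh}, so there is no internal argument to compare against, and what matters is whether your sketch stands on its own. In the open range $1<p_i,q_i<\infty$ it essentially does: the reduction of $(\ref{leibniz rule inhomogeneous sobolev})$ to $(\ref{leibniz rule homogeneous sobolev})$ via $H^\gamma_r=L^r\cap\dot H^\gamma_r$ and H\"older, the Bony splitting into $\Pi_1,\Pi_2,\Pi_3$, and the square-function/Fefferman--Stein/maximal-function treatment of each piece is the standard proof. One small correction: the geometric summation in the resonant term needs $\gamma>0$, not merely $\gamma\geq 0$; the case $\gamma=0$ of $(\ref{leibniz rule homogeneous sobolev})$ is plain H\"older and should be dispatched separately, just as you do in the inhomogeneous reduction.

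The genuine gap is the endpoint cases, which are part of the statement: the proposition permits $p_1=\infty$ (then $q_1=r$), i.e.\ the right-hand side term $\|\Lambda^\gamma u\|_{L^\infty}\|v\|_{L^{r}}$, and symmetrically $q_2=\infty$. In the high-low term $\Pi_1$ the derivative must land on the high-frequency factor $u$ (it cannot be transferred to the low-frequency factor $S_{N/8}v$, since $N^\gamma\not\lesssim L^\gamma$ for $L\ll N$), so your scheme forces the estimate $\big\|\big(\sum_N N^{2\gamma}|P_Nu|^2\big)^{1/2}\big\|_{L^{p_1}}\lesssim\|\Lambda^\gamma u\|_{L^{p_1}}$, which is false at $p_1=\infty$: $L^\infty$ admits no Littlewood--Paley square-function characterization, and no rearrangement of the H\"older step avoids measuring the derivative-bearing factor in $L^\infty$ for this exponent configuration. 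Your suggested repair --- ``invoke the refined endpoint form of the Kato--Ponce inequality'' --- is circular, because that endpoint form is exactly the proposition being proved; admitting the $L^\infty$ endpoints is precisely the principal contribution of \cite{GrafakosOh}, which is why the paper cites it. So as written your proposal proves the result only for $1<p_i,q_i<\infty$, and the full statement still rests on the cited reference.
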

We refer to $\cite{GrafakosOh}$ (and references therein) for the proof of above inequalities and more general results. We also have the following fractional chain rule.
\begin{prop} \label{prop fractional chain rule}
Let $F \in C^1(\C, \C)$ and $G \in C(\C, \R^+)$ such that $F(0)=0$ and 
\[
|F'( \theta z+ (1-\theta) \zeta)| \leq \mu(\theta) (G(z)+G(\zeta)), \quad z,\zeta \in \C, \quad 0 \leq \theta \leq 1,
\]
where $\mu \in L^1((0,1))$. Then for $ \gamma \in (0,1)$ and $1 <r, p <\infty$, $1 <q \leq \infty$ satisfying $\frac{1}{r}=\frac{1}{p}+\frac{1}{q}$, there exists $C=C(d,\mu, \gamma, r, p,q)>0$ such that for all $u \in \Sh$,
\begin{align}
\|\Lambda^\gamma F(u) \|_{L^r} &\leq C \|F'(u) \|_{L^q} \|\Lambda^\gamma u \|_{L^p},\label{chain rule homogeneous sobolev} \\
\|\scal{\Lambda}^\gamma F(u) \|_{L^r} &\leq C \|F'(u) \|_{L^q} \|\scal{\Lambda}^\gamma u \|_{L^p}. \label{chain rule inhomogeneous sobolev} 
\end{align}
\end{prop}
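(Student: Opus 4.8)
The plan is to prove the homogeneous bound $(\ref{chain rule homogeneous sobolev})$ first and then deduce the inhomogeneous one $(\ref{chain rule inhomogeneous sobolev})$, the main tool being the difference-quotient characterization of fractional Sobolev norms that is available exactly in the range $\gamma \in (0,1)$, $1<r<\infty$. Concretely, I would invoke Stein's square-function characterization: there is $C=C(d,\gamma,r)>0$ such that for every $f \in \Sh$,
\[
\frac{1}{C}\|S_\gamma f\|_{L^r} \leq \|\Lambda^\gamma f\|_{L^r} \leq C \|S_\gamma f\|_{L^r}, \qquad S_\gamma f(x):=\Big(\int_{\R^d} \frac{|f(x+y)-f(x)|^2}{|y|^{d+2\gamma}}\,dy\Big)^{1/2}.
\]
Applying this with $f=F(u)$ reduces $(\ref{chain rule homogeneous sobolev})$ to the pointwise analysis of the increments $F(u(x+y))-F(u(x))$.

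Next I would linearize $F$ along segments. Since $F\in C^1(\C,\C)$ with $F(0)=0$, the fundamental theorem of calculus gives
\[
F(u(x+y))-F(u(x)) = \big(u(x+y)-u(x)\big)\int_0^1 F'\big(\theta u(x+y)+(1-\theta)u(x)\big)\,d\theta.
\]
The hypothesis on $F'$ together with $\mu \in L^1((0,1))$ then yields the pointwise bound
\[
|F(u(x+y))-F(u(x))| \leq \|\mu\|_{L^1}\,\big(G(u(x+y))+G(u(x))\big)\,|u(x+y)-u(x)|.
\]
Substituting this into $S_\gamma(F(u))$ and using $(a+b)^2\le 2a^2+2b^2$ splits the square function into a \emph{diagonal} contribution $G(u(x))\,S_\gamma u(x)$ and an \emph{off-diagonal} contribution in which the weight $G(u(x+y))$ sits inside the $y$-integral.

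The diagonal term is immediate: by H\"older's inequality with $\tfrac1r=\tfrac1p+\tfrac1q$,
\[
\big\|G(u)\,S_\gamma u\big\|_{L^r} \leq \|G(u)\|_{L^q}\,\|S_\gamma u\|_{L^p} \lesssim \|G(u)\|_{L^q}\,\|\Lambda^\gamma u\|_{L^p}.
\]
For the pure-power nonlinearities used in this note one has $G(z)\approx |z|^{\nu-1}\approx |F'(z)|$, so $\|G(u)\|_{L^q}\approx \|F'(u)\|_{L^q}$ and the stated form follows. The off-diagonal term, where $G(u(x+y))$ is coupled to the increment $|u(x+y)-u(x)|$, is the genuine difficulty, since one cannot simply pull $G(u(x+y))$ outside the integral. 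I would control it by the Christ--Weinstein device: decompose the $y$-integral into dyadic annuli $|y|\sim 2^j$, bound each piece by a Hardy--Littlewood maximal function of $G(u)$ paired with a scale-localized square function of $u$, and sum via the Fefferman--Stein vector-valued maximal inequality, which is exactly where the restrictions $1<r,p<\infty$ and $1<q\le\infty$ enter. This is the crux of the argument, and the endpoint $q=\infty$ has to be treated separately.

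Finally, the inhomogeneous estimate $(\ref{chain rule inhomogeneous sobolev})$ follows from $(\ref{chain rule homogeneous sobolev})$ using $\|\scal{\Lambda}^\gamma g\|_{L^r}\approx \|g\|_{L^r}+\|\Lambda^\gamma g\|_{L^r}$ for $\gamma>0$: the homogeneous part is already handled, while the zeroth-order part $\|F(u)\|_{L^r}$ is bounded by $\|F'(u)\|_{L^q}\|u\|_{L^p}\le \|F'(u)\|_{L^q}\|\scal{\Lambda}^\gamma u\|_{L^p}$ through the mean-value identity $F(u)=u\int_0^1 F'(\theta u)\,d\theta$ and H\"older. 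The main obstacle throughout remains the off-diagonal square-function bound; everything else is the fundamental theorem of calculus together with H\"older's inequality.
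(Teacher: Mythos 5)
Your overall skeleton (fundamental theorem of calculus along segments, diagonal/off-diagonal splitting, H\"older, maximal functions and Fefferman--Stein) is the right one --- it is essentially the Christ--Weinstein argument that the paper, for its part, does not reproduce but simply cites (\cite{ChristWeinstein}, \cite{Staffilani} for the homogeneous estimate, \cite{Taylor} for the inhomogeneous one). However, your implementation via Stein's difference square function $S_\gamma$ has a genuine gap: the two-sided equivalence $\frac{1}{C}\|S_\gamma f\|_{L^r} \leq \|\Lambda^\gamma f\|_{L^r} \leq C\|S_\gamma f\|_{L^r}$ is \emph{false} for general $1<r<\infty$. The direction $\|\Lambda^\gamma f\|_{L^r}\lesssim \|S_\gamma f\|_{L^r}$, which you use on $F(u)$, is fine for $1<r<\infty$; but the reverse direction --- exactly what your diagonal term needs in the form $\|S_\gamma u\|_{L^p}\lesssim \|\Lambda^\gamma u\|_{L^p}$ --- holds only for $p>\frac{2d}{d+2\gamma}$ (Stein \cite{Stein70}, Ch.~V, Theorem 5, where this restriction is shown to be necessary). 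Indeed, for a smooth bump $u$ one has $S_\gamma u(x)\approx |x|^{-(d+2\gamma)/2}$ as $|x|\to\infty$, so $S_\gamma u\notin L^p$ when $p\leq \frac{2d}{d+2\gamma}$, whereas $\Lambda^\gamma u(x)=O(|x|^{-d-\gamma})$ lies in every $L^p$, $p>1$. Since the proposition allows all $1<p<\infty$, and the window $1<p\leq\frac{2d}{d+2\gamma}$ is nonempty whenever $2\gamma<d$ (hence always for $d\geq 2$, and for $d=1$ when $\gamma<1/2$), your chain of inequalities terminates in $+\infty$ on a range of exponents where the statement still has to be proved; the same obstruction resurfaces in your off-diagonal term if the ``scale-localized square function'' there is again of $S_\gamma$ type. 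This cannot be patched inside the $S_\gamma$ framework. The cited proof of Christ--Weinstein runs the identical scheme on the \emph{discrete} Littlewood--Paley square function $\big(\sum_N N^{2\gamma}|P_N\cdot|^2\big)^{1/2}$, whose two-sided $L^p$ equivalence with $\|\Lambda^\gamma\cdot\|_{L^p}$ holds for all $1<p<\infty$: one writes $P_N F(u)(x)=\int \check{\chi}_N(y)\left[F(u(x-y))-F(u(x))\right]dy$ using the mean-zero kernel, inserts your pointwise FTC bound, controls the increments by maximal functions of Littlewood--Paley pieces of $u$, and concludes with the Fefferman--Stein vector-valued maximal inequality.

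A second, smaller discrepancy: your argument outputs $\|G(u)\|_{L^q}$, not $\|F'(u)\|_{L^q}$. The hypothesis gives the pointwise bound $|F'(z)|\lesssim G(z)$ (take $z=\zeta$ and a value of $\theta$ where $\mu$ is finite) but nothing in the reverse direction, so a bound by $\|G(u)\|_{L^q}$ is strictly weaker than the stated conclusion for general admissible pairs $(F,G)$. Your appeal to $G\approx |F'|$ is legitimate for the pure powers $F(z)=|z|^{\nu-1}z$ used in this paper, and in fact Christ--Weinstein's own theorem is formulated with $\|G(u)\|_{L^q}$; but as a proof of the proposition \emph{as stated} this substitution is an unjustified step. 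The same caveat applies to your zeroth-order term in the inhomogeneous estimate, where the mean-value identity produces $\int_0^1 F'(\theta u)\,d\theta$, which the hypothesis controls by $G(u)+G(0)$, not by $|F'(u)|$.
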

We refer to \cite{ChristWeinstein} (see also \cite{Staffilani}) for the proof of $(\ref{chain rule homogeneous sobolev})$ and \cite{Taylor} for $(\ref{chain rule inhomogeneous sobolev})$. A direct consequence of the fractional Leibniz rule and the fractional chain rule is the following fractional derivatives.
\begin{lem} \label{lem nonlinear estimates}
Let $F \in C^k(\C, \C), k \in \N \backslash \{0\}$. Assume that there is $\nu \geq k$  such that 
\[
|D^iF (z)| \leq C |z|^{\nu-i}, \quad z \in \C, \quad i=1,2,...., k.
\]
Then for $\gamma \in [0,k]$ and $1 <r, p <\infty$, $1 <q \leq \infty$ satisfying $\frac{1}{r}=\frac{1}{p}+\frac{\nu-1}{q}$, there exists $C=C(d, \nu, \gamma, r,p,q)>0$ such that for all $u \in \Sh$,
\begin{align}
\|\Lambda^\gamma F(u)\|_{L^r} &\leq C \|u\|^{\nu-1}_{L^q} \|\Lambda^\gamma u \|_{L^p}, \label{nonlinear estimate homogeneous sobolev} \\
\|\scal{\Lambda}^\gamma F(u) \|_{L^r} & \leq C\|u\|^{\nu-1}_{L^q} \|\scal{\Lambda}^\gamma u\|_{L^p}. \label{nonlinear estimate inhomogeneous sobolev}
\end{align}
\end{lem}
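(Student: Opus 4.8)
The plan is to prove both estimates by induction on the integer $k$, extracting the higher order smoothness from the fractional Leibniz rule (Proposition~\ref{prop fractional leibniz rule}) and the fractional chain rule (Proposition~\ref{prop fractional chain rule}). I will only treat the homogeneous bound $(\ref{nonlinear estimate homogeneous sobolev})$; the inhomogeneous one $(\ref{nonlinear estimate inhomogeneous sobolev})$ follows line by line after replacing $\Lambda$ by $\scal{\Lambda}$ and invoking $(\ref{leibniz rule inhomogeneous sobolev})$ and $(\ref{chain rule inhomogeneous sobolev})$ in place of $(\ref{leibniz rule homogeneous sobolev})$ and $(\ref{chain rule homogeneous sobolev})$. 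Throughout I use the Riesz-transform identity $\|\Lambda g\|_{L^r}\approx\|\nabla g\|_{L^r}$, valid for $1<r<\infty$, to pass between the fractional operator $\Lambda$ and ordinary derivatives.

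For the base case $k=1$ we have $\gamma\in[0,1]$ and $|F'(z)|\le C|z|^{\nu-1}$ with $\nu-1\ge0$. When $\gamma=0$, integrating from the origin gives $|F(z)|\le C|z|^\nu$ (recall $F(0)=0$), whence Hölder with $\tfrac1r=\tfrac{\nu-1}q+\tfrac1p$ yields $\|F(u)\|_{L^r}\le C\|u\|_{L^q}^{\nu-1}\|u\|_{L^p}$. When $\gamma\in(0,1)$ I would apply Proposition~\ref{prop fractional chain rule}: from $|F'(w)|\le C|w|^{\nu-1}$ and $|\theta z+(1-\theta)\zeta|^{\nu-1}\le C(|z|^{\nu-1}+|\zeta|^{\nu-1})$ one may take $G(z)=C|z|^{\nu-1}$ and $\mu\equiv1\in L^1(0,1)$, so that $(\ref{chain rule homogeneous sobolev})$ with exponent $q/(\nu-1)$ for $F'(u)$ gives $\|\Lambda^\gamma F(u)\|_{L^r}\le C\|F'(u)\|_{L^{q/(\nu-1)}}\|\Lambda^\gamma u\|_{L^p}=C\|u\|_{L^q}^{\nu-1}\|\Lambda^\gamma u\|_{L^p}$. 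The endpoint $\gamma=1$ follows from the Riesz equivalence together with $\nabla F(u)=F'(u)\nabla u$ and the same Hölder splitting.

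For the inductive step, assume the statement for $k-1$ (with the parameter $\nu-1$) and take $F\in C^k$, $\nu\ge k$, $\gamma\in[0,k]$. If $\gamma\le k-1$ the conclusion is immediate from the inductive hypothesis applied to $F$ itself. If $\gamma\in(k-1,k]$, I would write, using the Riesz equivalence and $\nabla F(u)=F'(u)\nabla u$,
\[
\|\Lambda^\gamma F(u)\|_{L^r}\approx\|\Lambda^{\gamma-1}\nabla F(u)\|_{L^r}=\|\Lambda^{\gamma-1}\big(F'(u)\nabla u\big)\|_{L^r},
\]
and then apply $(\ref{leibniz rule homogeneous sobolev})$ to bound the right-hand side by
\[
C\Big(\|\Lambda^{\gamma-1}F'(u)\|_{L^{r_1}}\|\nabla u\|_{L^{s_1}}+\|F'(u)\|_{L^{r_2}}\|\Lambda^{\gamma-1}\nabla u\|_{L^{s_2}}\Big).
\]
In the second term I choose $r_2=q/(\nu-1)$ and $s_2=p$, so $\|F'(u)\|_{L^{r_2}}=\|u\|_{L^q}^{\nu-1}$ and $\|\Lambda^{\gamma-1}\nabla u\|_{L^p}\approx\|\Lambda^\gamma u\|_{L^p}$, which is precisely the desired bound. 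In the first term I note that $F'\in C^{k-1}$ obeys $|D^iF'(z)|=|D^{i+1}F(z)|\le C|z|^{(\nu-1)-i}$ for $i=1,\dots,k-1$ with $\nu-1\ge k-1$ and $\gamma-1\in[0,k-1]$, so the inductive hypothesis gives $\|\Lambda^{\gamma-1}F'(u)\|_{L^{r_1}}\le C\|u\|_{L^q}^{\nu-2}\|\Lambda^{\gamma-1}u\|_{L^P}$ for a suitable $P$.

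It then remains to recombine $\|\Lambda^{\gamma-1}u\|_{L^P}\,\|\nabla u\|_{L^{s_1}}\approx\|\Lambda^{\gamma-1}u\|_{L^P}\,\|\Lambda u\|_{L^{s_1}}$ into $\|u\|_{L^q}\,\|\Lambda^\gamma u\|_{L^p}$, which I would carry out with the Gagliardo--Nirenberg interpolation inequality: since the two derivative orders $\gamma-1$ and $1$ sum to $\gamma$, the corresponding interpolation exponents sum to one, producing exactly one factor $\|\Lambda^\gamma u\|_{L^p}$ and one extra factor $\|u\|_{L^q}$; together with $\|u\|_{L^q}^{\nu-2}$ this gives the total $\|u\|_{L^q}^{\nu-1}\|\Lambda^\gamma u\|_{L^p}$. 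The main obstacle is not any single hard estimate but the bookkeeping of the intermediate exponents $r_1,s_1,P$ and the verification that the Gagliardo--Nirenberg exponents are admissible (all in $(1,\infty)$, with the case $q=\infty$ treated directly); this closes automatically because the relation $\tfrac1r=\tfrac1p+\tfrac{\nu-1}q$ is dimensionally homogeneous under the scaling $u\mapsto u(\lambda\,\cdot)$, which forces the interpolation identities to balance.
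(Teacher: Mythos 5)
Your proof is essentially correct, but it takes a genuinely different route from the paper, which does not prove the lemma internally at all: for $(\ref{nonlinear estimate homogeneous sobolev})$ it simply cites Kato \cite{Kato95}, and it then obtains $(\ref{nonlinear estimate inhomogeneous sobolev})$ in one line from the equivalence $\|\scal{\Lambda}^\gamma u\|_{L^r}\sim\|u\|_{L^r}+\|\Lambda^\gamma u\|_{L^r}$ ($1<r<\infty$, $\gamma>0$), H\"older, and $|F(u)|\lesssim|u|^\nu$. Your induction on $k$ --- fractional chain rule for $\gamma\in(0,1)$, then $\|\Lambda^\gamma F(u)\|_{L^r}\approx\|\Lambda^{\gamma-1}(F'(u)\nabla u)\|_{L^r}$, the Kato--Ponce inequality, the inductive hypothesis for $F'$ (admissible since $\nu-1\geq k-1$ and $\gamma-1\in[0,k-1]$), and Gagliardo--Nirenberg recombination --- is in effect a self-contained reconstruction of the cited argument, and the exponent bookkeeping you defer does close: with $\theta_1=(\gamma-1)/\gamma$, $\theta_2=1/\gamma$, $1/P=(1-\theta_1)/q+\theta_1/p$ and $1/s_1=(1-\theta_2)/q+\theta_2/p$, one has $\theta_1+\theta_2=1$, $1/r_1+1/s_1=1/r$, and all intermediate exponents lie in $(1,\infty)$ because $q>\nu-1$ (forced by $r>1$) and $p<\infty$. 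The paper's route buys brevity and avoids the Gagliardo--Nirenberg machinery; yours makes the lemma self-contained. Three small repairs are needed: (i) the step $\gamma=0$ and the bound $|F(u)|\lesssim|u|^\nu$ require $F(0)=0$, which is not among the stated hypotheses but is implicitly assumed (and holds for $F(z)=|z|^{\nu-1}z$); (ii) derivatives of $F$ must be taken in the real sense, so $\nabla F(u)=\partial_zF(u)\nabla u+\partial_{\bar z}F(u)\nabla\bar u$ consists of two terms, each estimated identically; (iii) the inhomogeneous estimate does not follow quite ``line by line'', since $\|\scal{\Lambda}^{\gamma}g\|_{L^r}$ is comparable to $\|\scal{\Lambda}^{\gamma-1}g\|_{L^r}+\|\scal{\Lambda}^{\gamma-1}\nabla g\|_{L^r}$ rather than to the gradient term alone, which leaves an extra lower-order term in your inductive step; the cheapest fix is exactly the paper's: deduce $(\ref{nonlinear estimate inhomogeneous sobolev})$ from the already-proved $(\ref{nonlinear estimate homogeneous sobolev})$ via the norm equivalence above together with H\"older.
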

The reader can find the proof of $(\ref{nonlinear estimate homogeneous sobolev})$ in \cite{Kato95}. The one of $(\ref{nonlinear estimate inhomogeneous sobolev})$ follows from $(\ref{nonlinear estimate homogeneous sobolev})$, the H\"older inequality and the fact that
\[
\|\scal{\Lambda}^\gamma u \|_{L^r} \sim \|u\|_{L^r} + \|\Lambda^\gamma u \|_{L^r},
\]
for $1<r<\infty, \gamma>0$.
\begin{lem} \label{lem nonlinear estimates polynomial}
Let $F(z)$ be a homogeneous polynomial in $z,\overline{z}$ of degree $\nu\geq 1$. Then $(\ref{nonlinear estimate homogeneous sobolev})$ and $(\ref{nonlinear estimate inhomogeneous sobolev})$ hold true for any $\gamma \geq 0$ and $r, p, q$ as in $\emph{Lemma}$ $\ref{lem nonlinear estimates}$.
\end{lem}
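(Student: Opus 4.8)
The plan is to reduce everything to a single monomial and then iterate the Kato--Ponce inequality, thereby avoiding the fractional chain rule altogether; this is precisely what removes the restriction $\gamma \le k$ present in Lemma $\ref{lem nonlinear estimates}$. Since $F$ is a homogeneous polynomial of degree $\nu$ in $z,\overline z$, I would first write it as a finite linear combination $F(z)=\sum_{a+b=\nu} c_{a,b}\, z^a \overline z^{\,b}$. By the triangle inequality it then suffices to establish $(\ref{nonlinear estimate homogeneous sobolev})$ with $F(u)$ replaced by a single monomial $u^a \overline u^{\,b}$, which I regard as a product $u_1 u_2 \cdots u_\nu$ of $\nu$ factors, each $u_j$ being either $u$ or $\overline u$.

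The core step is an induction on $\nu$. For $\nu=1$ the claim is trivial. For the inductive step I would split $u_1\cdots u_\nu = u_1\cdot (u_2\cdots u_\nu)$ and apply the homogeneous fractional Leibniz rule $(\ref{leibniz rule homogeneous sobolev})$ with the choice $p_1=p$, $p_2=q$; the constraint $\frac1r=\frac1{p_1}+\frac1{q_1}=\frac1{p_2}+\frac1{q_2}$ then forces $\frac1{q_1}=\frac{\nu-1}{q}$ and $\frac1{q_2}=\frac1p+\frac{\nu-2}{q}$. For the first resulting term, H\"older's inequality with exponent $q_1$ gives $\|u_2\cdots u_\nu\|_{L^{q_1}}\le \prod_{k=2}^\nu \|u_k\|_{L^q}$, while the factor $\|\Lambda^\gamma u_1\|_{L^p}$ is already in the desired form. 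For the second term, the exponent $q_2$ satisfies exactly the H\"older relation of the degree-$(\nu-1)$ estimate, so the inductive hypothesis applies to $u_2\cdots u_\nu$ and produces $\|u\|_{L^q}^{\nu-2}\|\Lambda^\gamma u\|_{L^p}$, multiplied by $\|u_1\|_{L^q}$ coming from the first factor.

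To finish, I would use that the Fourier multiplier $|\xi|^\gamma$ is real and even, whence $\Lambda^\gamma \overline u=\overline{\Lambda^\gamma u}$ and therefore $\|\Lambda^\gamma u_j\|_{L^p}=\|\Lambda^\gamma u\|_{L^p}$ and $\|u_j\|_{L^q}=\|u\|_{L^q}$ regardless of whether $u_j$ is $u$ or $\overline u$; summing the $\nu$ contributions yields $(\ref{nonlinear estimate homogeneous sobolev})$ with a constant depending only on $\nu$ and the exponents. The inhomogeneous bound $(\ref{nonlinear estimate inhomogeneous sobolev})$ follows by the identical induction, using $(\ref{leibniz rule inhomogeneous sobolev})$ in place of $(\ref{leibniz rule homogeneous sobolev})$. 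The one point requiring care---and the only real obstacle---is the bookkeeping of the intermediate exponents: I must check that $p_1,p_2,q_1,q_2$ remain in the admissible range $(1,\infty]$ of Proposition $\ref{prop fractional leibniz rule}$ at every stage, in particular at the endpoint $q=\infty$ (where $r=p$ and the intermediate exponents degenerate to $p$ or $\infty$), so that the Kato--Ponce inequality and H\"older's inequality are legitimately applicable throughout.
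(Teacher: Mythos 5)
Your proposal is correct, but note that the paper does not actually prove this lemma: it is stated bare, as a known fact (the underlying argument is the classical one going back to the product estimates used in \cite{Kato95}), so there is no paper proof to compare against. Your induction on the degree via the Kato--Ponce inequality is precisely the standard argument that the paper implicitly invokes, and all the steps check out. The exponent bookkeeping you flag as the only delicate point does go through: from $\frac{1}{r}=\frac{1}{p}+\frac{\nu-1}{q}<1$ one gets $\frac{\nu-1}{q}<1$, hence $q_1=q/(\nu-1)\in (1,\infty]$, and $\frac{1}{p}\leq \frac{1}{q_2}=\frac{1}{p}+\frac{\nu-2}{q}\leq \frac{1}{r}<1$ gives $q_2\in(1,\infty)$, so both the Leibniz rule of Proposition \ref{prop fractional leibniz rule} (which permits $p_2=q_1=\infty$, covering the endpoint $q=\infty$) and the inductive hypothesis at exponents $(q_2,p,q)$ apply legitimately; the commutation $\Lambda^\gamma \overline{u}=\overline{\Lambda^\gamma u}$ holds because the multiplier $|\xi|^\gamma$ is real and even, and the same identity with $\scal{\Lambda}^\gamma$ handles the inhomogeneous estimate. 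One stylistic remark: the reduction to monomials and the conjugation step could be merged, since the induction never uses that the factors are literally $u$ or $\overline{u}$, only that each factor has the same $L^q$ and $\dot{H}^\gamma_p$ norms as $u$; but this does not affect correctness.
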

\begin{coro} \label{coro fractional derivatives}
Let $F(z)=|z|^{\nu-1}z$ with $\nu>1$, $\gamma \geq 0$ and $1 <r, p <\infty$, $1 <q \leq \infty$ satisfying $\frac{1}{r}=\frac{1}{p}+\frac{\nu-1}{q}$. 
\begin{itemize}
\item[\emph{i.}] If $\nu$ is an odd integer or $\lceil \gamma \rceil \leq \nu$ otherwise, then there exists $C=C(d,\nu, \gamma, r, p, q)>0$ such that for all $u \in \Sh$,
\[
\|F(u)\|_{\dot{H}^\gamma_r}  \leq C \|u\|^{\nu-1}_{L^q} \|u\|_{\dot{H}^\gamma_p}.
\]
A similar estimate holds with $\dot{H}^\gamma_r, \dot{H}^\gamma_p$-norms are replaced by $H^\gamma_r, H^\gamma_p$-norms respectively.
\item[\emph{ii.}] If $\nu$ is an odd integer or $\lceil \gamma \rceil \leq \nu-1$ otherwise, then there exists $C= C(d,\nu,\gamma, r, p, q)>0$ such that for all $u, v \in \Sh$,
\begin{multline}
\|F(u)-F(v)\|_{\dot{H}^\gamma_r}  \leq C \Big( (\|u\|^{\nu-1}_{L^q} + \|v\|^{\nu-1}_{L^q}) \|u-v\|_{\dot{H}^\gamma_p} \Big. \\
\Big.+ (\|u\|^{\nu-2}_{L^q} +\|v\|^{\nu-2}_{L^q})(\|u\|_{\dot{H}^\gamma_p} + \|v\|_{\dot{H}^\gamma_p}) \|u-v\|_{L^q} \Big). \nonumber
\end{multline}
A similar estimate holds with $\dot{H}^\gamma_r, \dot{H}^\gamma_p$-norms are replaced by $H^\gamma_r, H^\gamma_p$-norms respectively.
\end{itemize}
\begin{proof}
Item 1 is an immediate consequence of Lemma $\ref{lem nonlinear estimates}$ and Lemma $\ref{lem nonlinear estimates polynomial}$. For Item 2, we firstly write
\[
F(u)-F(v)= \nu \int_{0}^{1} |v+t(u-v)|^{\nu-1} (u-v) dt,
\]
and use the fractional Leibniz rule given in Proposition $\ref{prop fractional leibniz rule}$. Then the results follows by applying the fractional derivatives given in Lemma $\ref{lem nonlinear estimates}$ and Lemma $\ref{lem nonlinear estimates polynomial}$. 
\end{proof}
\end{coro}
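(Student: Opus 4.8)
The plan is to reduce both items to the fractional derivative estimates of Lemma~\ref{lem nonlinear estimates} and Lemma~\ref{lem nonlinear estimates polynomial}, the only genuine work being to verify the smoothness/growth hypotheses on $F$ and to distribute the $L^q$-norms correctly through the fractional Leibniz rule. For Item i, I would split according to the parity of $\nu$. If $\nu$ is an odd integer, then $|z|^{\nu-1}=(z\bar z)^{(\nu-1)/2}$ is a polynomial, so $F(z)=|z|^{\nu-1}z$ is a homogeneous polynomial of degree $\nu$ in $(z,\bar z)$ and Lemma~\ref{lem nonlinear estimates polynomial} yields the bound directly for every $\gamma\ge0$, with no extra restriction. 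If $\nu$ is not an odd integer, the hypothesis $\lceil\gamma\rceil\le\nu$ guarantees that $F\in C^{\lceil\gamma\rceil}(\C,\C)$ and that its derivatives obey the scaling bound $|D^iF(z)|\le C|z|^{\nu-i}$ for $1\le i\le\lceil\gamma\rceil$. Taking $k=\lceil\gamma\rceil$ in Lemma~\ref{lem nonlinear estimates} (so that $\gamma\in[0,k]$ and $\nu\ge k$) gives $\|\Lambda^\gamma F(u)\|_{L^r}\le C\|u\|_{L^q}^{\nu-1}\|\Lambda^\gamma u\|_{L^p}$ under the Hölder relation $\tfrac1r=\tfrac1p+\tfrac{\nu-1}{q}$, which is exactly the claim; the $H^\gamma_r,H^\gamma_p$ version follows identically from the inhomogeneous estimate $(\ref{nonlinear estimate inhomogeneous sobolev})$.

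For Item ii, I would start from the integral representation
\[
F(u)-F(v)=\int_0^1 \frac{d}{dt}F(z_t)\,dt,\qquad z_t:=v+t(u-v),
\]
where $\tfrac{d}{dt}F(z_t)$ is a finite sum of terms of the form $G(z_t)\,(u-v)$ with $G$ satisfying $|G(z)|\le C|z|^{\nu-1}$ and $|D^iG(z)|\le C|z|^{\nu-1-i}$ (schematically $G\sim|z|^{\nu-1}$; the exact Wirtinger differential of $F$ produces the pieces $|z|^{\nu-1}$ and $|z|^{\nu-3}z^2$, both of this type). After applying $\Lambda^\gamma$ and pulling the $t$-integral out by Minkowski, the fractional Leibniz rule of Proposition~\ref{prop fractional leibniz rule} splits each term as
\[
\|\Lambda^\gamma\big(G(z_t)(u-v)\big)\|_{L^r}\lesssim \|G(z_t)\|_{L^{q/(\nu-1)}}\|\Lambda^\gamma(u-v)\|_{L^p}+\|\Lambda^\gamma G(z_t)\|_{L^s}\|u-v\|_{L^q},
\]
with $\tfrac1s=\tfrac{\nu-2}{q}+\tfrac1p$, so that both products land in $L^r$ with $\tfrac1r=\tfrac1p+\tfrac{\nu-1}{q}$. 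The first factor is controlled by $\|G(z_t)\|_{L^{q/(\nu-1)}}\lesssim\|z_t\|_{L^q}^{\nu-1}\lesssim\|u\|_{L^q}^{\nu-1}+\|v\|_{L^q}^{\nu-1}$, giving the first term of the right-hand side. For the second factor I would apply the fractional derivative estimate to the degree-$(\nu-1)$ nonlinearity $G$: this is where the sharper hypothesis enters, since $G\sim|z|^{\nu-1}$ is one degree less regular than $F$, so Lemma~\ref{lem nonlinear estimates} (or Lemma~\ref{lem nonlinear estimates polynomial} when $\nu$ is odd) now requires $\lceil\gamma\rceil\le\nu-1$ to furnish $\|\Lambda^\gamma G(z_t)\|_{L^s}\lesssim\|z_t\|_{L^q}^{\nu-2}\|\Lambda^\gamma z_t\|_{L^p}\lesssim(\|u\|_{L^q}^{\nu-2}+\|v\|_{L^q}^{\nu-2})(\|u\|_{\dot H^\gamma_p}+\|v\|_{\dot H^\gamma_p})$, which is the second term. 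Collecting the two contributions and using $\|z_t\|_{\dot H^\gamma_p}\le\|u\|_{\dot H^\gamma_p}+\|v\|_{\dot H^\gamma_p}$ completes the homogeneous estimate; the inhomogeneous one is identical via $(\ref{nonlinear estimate inhomogeneous sobolev})$.

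The main obstacle is the regularity bookkeeping rather than the functional inequalities themselves: one must confirm that $F$ (resp. its differential $G$) is genuinely of class $C^{\lceil\gamma\rceil}$ with the homogeneous-scaling derivative bounds $|D^iF(z)|\lesssim|z|^{\nu-i}$ (resp. $|D^iG(z)|\lesssim|z|^{\nu-1-i}$), since it is precisely the failure of smoothness at $z=0$ for non-odd $\nu$ that forces the threshold $\lceil\gamma\rceil\le\nu$ in Item i and the one-degree-stronger $\lceil\gamma\rceil\le\nu-1$ in Item ii. A minor secondary point is the choice of the intermediate exponent $s$ so that the Leibniz split closes to the prescribed Hölder relation $\tfrac1r=\tfrac1p+\tfrac{\nu-1}{q}$; note also that at $\gamma=0$ no derivative falls on $G$ and the statement degenerates to the elementary pointwise inequality $|F(u)-F(v)|\lesssim(|u|^{\nu-1}+|v|^{\nu-1})|u-v|$.
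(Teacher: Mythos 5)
Your proposal is correct and follows essentially the same route as the paper: Item i is deduced directly from Lemma \ref{lem nonlinear estimates} and Lemma \ref{lem nonlinear estimates polynomial}, and Item ii uses the segment-integral representation of $F(u)-F(v)$, the fractional Leibniz rule of Proposition \ref{prop fractional leibniz rule}, and then the same two lemmas applied to the degree-$(\nu-1)$ derivative, which is exactly where the stronger hypothesis $\lceil\gamma\rceil\leq\nu-1$ enters. You merely spell out details the paper leaves implicit (the Wirtinger structure of the differential and the intermediate exponent $s$), so there is nothing to correct.
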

\section{Nonlinear fractional Schr\"odinger equations}
\setcounter{equation}{0}
\subsection{Local well-posedness in sub-critical cases}
In this subsection, we give the proofs of Theorem $\ref{theorem local wellposedness subcritical schrodinger sigma <2}$, Theorem $\ref{theorem local wellposedness subcritical schrodinger sigma geq 2}$ and Proposition $\ref{prop global subcritical schrodinger}$. 
\paragraph{Proof of Theorem $\ref{theorem local wellposedness subcritical schrodinger sigma <2}$.} 
We follow the standard process (see e.g. \cite{Cazenave} or \cite{BGT}) by using the fixed point argument in a suitable Banach space. We firstly choose $p> \max(\nu-1, 4)$ when $d=1$ and $p> \max(\nu-1, 2)$ when $d\geq 2$ such that $\gamma > d/2 - \sigma/p$ and then choose $q\in [2,\infty)$ such that
\[
\frac{2}{p} + \frac{d}{q} \leq  \frac{d}{2}.
\]
\textbf{Step 1.} Existence. Let us consider
\[
X := \Big\{ u \in L^\infty(I,H^\gamma) \cap L^p(I,H^{\gamma - \gamma_{p,q}}_q) \ | \ \|u\|_{L^\infty(I, H^\gamma)} + \|u\|_{L^p(I, H^{\gamma-\gamma_{p,q}}_q)} \leq M \Big\},
\]
equipped with the distance
\[
d(u,v):= \|u-v\|_{L^\infty(I, L^2)} + \|u-v\|_{L^p(I,H^{-\gamma_{p,q}}_q)},
\]
where $I =[0,T]$ and $M, T>0$ to be chosen later. The persistence of regularity (see e.g. \cite{Cazenave}, Theorem 1.2.5) shows that $(X,d)$ is a complete metric space. By the Duhamel formula, it suffices to prove that the functional 
\begin{align}
\Phi(u)(t) = e^{-it\Lambda^\sigma} \varphi +i\mu \int_0^t e^{-i(t-s)\Lambda^\sigma} |u(s)|^{\nu-1} u(s)ds \label{duhamel formula schrodinger}
\end{align}
is a contraction on $X$. The local Strichartz estimate $(\ref{local strichartz schrodinger})$ gives
\begin{align*}
\|\Phi(u)\|_{L^\infty(I,H^\gamma)}+\|\Phi(u)\|_{L^p(I, H^{\gamma-\gamma_{p,q}}_q)} &\lesssim \|\varphi\|_{H^\gamma}+ \|F(u)\|_{L^1(I,H^\gamma)}, \\
\|\Phi(u)-\Phi(v)\|_{L^\infty(I,L^2)} +\|\Phi(u)-\Phi(v)\|_{L^p(I,H^{-\gamma_{p,q}}_q)} &\lesssim \|F(u)-F(v)\|_{L^1(I,L^2)},
\end{align*}
where $F(u) = |u|^{\nu-1} u$. By our assumptions on $\nu$, Corollary $\ref{coro fractional derivatives}$ gives
\begin{align}
\|F(u)\|_{L^1(I,H^\gamma)} &\lesssim \|u\|^{\nu-1}_{L^{\nu-1}(I,L^\infty)} \|u\|_{L^\infty(I,H^\gamma)} \lesssim  T^{1-\frac{\nu-1}{p}} \|u\|^{\nu-1}_{L^p(I,L^\infty)} \|u\|_{L^\infty(I,H^\gamma)}, \label{nonlinear application 1} \\
\|F(u)-F(v)\|_{L^1(I,L^2)} &\lesssim \Big(\|u\|^{\nu-1}_{L^{\nu-1}(I,L^\infty)} + \|v\|^{\nu-1}_{L^{\nu-1}(I,L^\infty)}\Big) \|u-v\|_{L^\infty(I,L^2)}  \nonumber \\
&\lesssim T^{1-\frac{\nu-1}{p}} \Big(\|u\|^{\nu-1}_{L^p(I,L^\infty)} + \|v\|^{\nu-1}_{L^p(I,L^\infty)}\Big) \|u-v\|_{L^\infty(I,L^2)}. \label{uniqueness subcritical schrodinger}
\end{align}
Using that $\gamma - \gamma_{p,q} > d/q$, the Sobolev embedding implies $L^p(I,H^{\gamma - \gamma_{p,q}}_q) \subset L^p(I,L^\infty)$. Thus, we get
\[
\|\Phi(u)\|_{L^\infty(I,H^\gamma)}+\|\Phi(u)\|_{L^p(I, H^{\gamma-\gamma_{p,q}}_q)} \lesssim \|\varphi\|_{H^\gamma}+ T^{1-\frac{\nu-1}{p}} \|u\|^{\nu-1}_{L^p(I,H^{\gamma-\gamma_{p,q}}_q)} \|u\|_{L^\infty(I,H^\gamma)}, 
\]
and
\[
d(\Phi(u),\Phi(v))  \lesssim T^{1-\frac{\nu-1}{p}} \Big(\|u\|^{\nu-1}_{L^p(I,H^{\gamma-\gamma_{p,q}}_q)} + \|v\|^{\nu-1}_{L^p(I,H^{\gamma-\gamma_{p,q}}_q)}\Big) \|u-v\|_{L^\infty(I,L^2)}. 
\]
This shows that for all $u,v \in X$, there exists $C>0$ independent of $\varphi \in H^\gamma$ such that
\begin{align*}
\|\Phi(u)\|_{L^\infty(I,H^\gamma)}+\|\Phi(u)\|_{L^p(I, H^{\gamma-\gamma_{p,q}}_q)} &\leq C\|\varphi\|_{H^\gamma} + C T^{1-\frac{\nu-1}{p}} M^{\nu}, \\
d(\Phi(u),\Phi(v))  &\leq C T^{1-\frac{\nu-1}{p}} M^{\nu-1} d(u,v).
\end{align*}
Therefore, if we set $M=2C\|\varphi\|_{H^\gamma}$ and choose $T>0$ small enough so that $C T^{1-\frac{\nu-1}{p}} M^{\nu-1} \leq \frac{1}{2}$, then $X$ is stable by $\Phi$ and $\Phi$ is a contraction on $X$. By the fixed point theorem, there exists a unique $u \in X$ so that $\Phi(u)=u$. \newline
\noindent \textbf{Step 2.} Uniqueness. Consider $u, v \in C(I,H^\gamma) \cap L^p(I, L^\infty)$ two solutions of (NLFS). Since the uniqueness is a local property (see \cite{Cazenave}), it suffices to show $u=v$ for $T$ is small. We have from $(\ref{uniqueness subcritical schrodinger})$ that
\[
d(u,v) \leq C T^{1-\frac{\nu-1}{p}} \Big( \|u\|^{\nu-1}_{L^p(I,L^\infty)} + \|v\|^{\nu-1}_{L^p(I,L^\infty)}\Big) d(u,v).
\] 
Since $\|u\|_{L^p(I, L^\infty)}$ is small if $T$ is small and similarly for $v$, we see that if $T>0$ small enough, 
\[
d(u,v) \leq \frac{1}{2} d(u,v) \text{ or } u=v.
\]
\noindent \textbf{Step 3.} Item i. Since the time of existence constructed in Step 1 only depends on $\|\varphi\|_{H^\gamma}$. The blowup alternative follows by standard argument (see e.g. \cite{Cazenave}). \newline
\noindent \textbf{Step 4.} Item ii. Let $\varphi_n \rightarrow \varphi$ in $H^\gamma$ and $C, T=T(\varphi)$ be as in Step 1. Set $M=4C\|\varphi\|_{H^\gamma}$. It follows that $2C\|\varphi_n\|_{H^\gamma} \leq M$ for sufficiently large $n$. Thus the solution $u_n$ constructed in Step 1 belongs to $X$ with $T=T(\varphi)$ for $n$ large enough. We have from Strichartz estimate $(\ref{local strichartz schrodinger})$ and $(\ref{nonlinear application 1})$ that
\[
\|u\|_{L^a(I,H^{\gamma-\gamma_{a,b}}_b)} \lesssim \|\varphi\|_{H^\gamma}+T^{1-\frac{\nu-1}{p}} \|u\|^{\nu-1}_{L^p(I,L^\infty)} \|u\|_{L^\infty(I,H^\gamma)},
\]
provided $(a,b)$ is admissible and $b<\infty$. This shows the boundedness of $u_n$ in $L^a(I,H^{\gamma-\gamma_{a,b}}_b)$. We also have from $(\ref{uniqueness subcritical schrodinger})$ and the choice of $T$ that
\[
d(u_n, u) \leq C\|\varphi_n-\varphi\|_{L^2}+\frac{1}{2}d(u_n,u) \text{ or } d(u_n, u) \leq 2C\|\varphi_n-\varphi\|_{L^2}.
\]
This yields that $u_n \rightarrow u$ in $L^\infty(I,L^2)\cap L^p(I,H^{-\gamma_{p,q}}_q)$. Strichartz estimate $(\ref{local strichartz schrodinger})$ again implies that $u_n\rightarrow u$ in $L^a(I,H^{-\gamma_{a,b}}_b)$ for any admissible pair $(a,b)$ with $b<\infty$. The convergence in $C(I,H^{\gamma-\ep})$ follows from the boundedness in $L^\infty(I,H^\gamma)$, the convergence in $L^\infty(I,L^2)$ and that $\|u\|_{H^{\gamma-\ep}} \leq \|u\|^{1-\frac{\ep}{\gamma}}_{H^\gamma}\|u\|^{\frac{\ep}{\gamma}}_{L^2}$.
\defendproof
\begin{rem} \label{rem continuity in H gamma}
If we assume that $\nu>1$ is an odd integer or 
\[
\lceil \gamma \rceil \leq \nu-1
\]
otherwise, then the continuous dependence holds in $C(I,H^\gamma)$. To see this, we consider $X$ as above equipped with the following metric
\[
d(u,v):= \|u-v\|_{L^\infty(I,H^\gamma)} + \|u-v\|_{L^p(I,H^{\gamma-\gamma_{p,q}}_q)}.
\]
Using Item ii of Corollary $\ref{coro fractional derivatives}$, we have
\begin{multline*}
\|F(u)-F(v)\|_{L^1(I,H^\gamma)} \lesssim  (\|u\|^{\nu-1}_{L^{\nu-1}(I,L^\infty)} +\|v\|^{\nu-1}_{L^{\nu-1}(I,L^\infty)}) \|u-v\|_{L^\infty(I,H^\gamma)} \\ + (\|u\|^{\nu-2}_{L^{\nu-1}(I,L^\infty)}+\|v\|^{\nu-2}_{L^{\nu-1}(I,L^\infty)})(\|u\|_{L^\infty(I,H^\gamma)} + \|v\|_{L^\infty(I,H^\gamma)})\|u-v\|_{L^{\nu-1}(I,L^\infty)}.
\end{multline*}
Using the Sobolev embedding, we see that for all $u, v \in X$,
\[
d(\Phi(u),\Phi(v)) \lesssim T^{1-\frac{\nu-1}{p}} M^{\nu-1} d(u,v).
\]
Therefore, the continuity in $C(I,H^\gamma)$ follows as in Step 5.
\end{rem}
\paragraph{Proof of Theorem $\ref{theorem local wellposedness subcritical schrodinger sigma geq 2}$.} 
Let $(p,q)$ be as in $(\ref{define p q})$. It is easy to see that $(p,q)$ is admissible and $\gamma_{p,q}=0=\gamma_{p',q'}+\sigma$. We next choose $(m, n)$ so that
\begin{align}
\frac{1}{p'}=\frac{1}{p}+\frac{\nu-1}{m}, \quad \frac{1}{q'}=\frac{1}{q}+\frac{\nu-1}{n}. \label{define m n}
\end{align}
It is easy to see that
\[
\frac{\nu-1}{m}-\frac{\nu-1}{p}= 1-\frac{(\nu-1)(d-2\gamma)}{2\sigma} >0, \quad q \leq n = \frac{dq}{d-\gamma q}.
\]
The Sobolev embedding implies
\begin{align}
\|u\|^{\nu-1}_{L^m(I,L^n)} \lesssim |I|^{1-\frac{(\nu-1)(d-2\gamma)}{2\sigma}} \|u\|^{\nu-1}_{L^p(I,\dot{H}^\gamma_q)}. \label{sobolev embedding}
\end{align}
\textbf{Step 1.} Existence. Let us consider
\[
X:=\Big\{u \in L^p(I,H^\gamma_q) \ | \ \|u\|_{L^p(I,\dot{H}^\gamma_q)} \leq M \Big\},
\]
equipped with the distance
\[
d(u,v)=\|u-v\|_{L^p(I,L^q)},
\]
where $I=[0,T]$ and $M,T>0$ to be determined. One can easily verify that $(X,d)$ is a complete metric space (see e.g. \cite{CazenaveWeissler}). The Strichartz estimate $(\ref{usual inhomogeneous strichartz schrodinger})$ implies
\begin{align*}
\|\Phi(u)\|_{L^p(I,\dot{H}^\gamma_q)} &\lesssim \|\varphi\|_{\dot{H}^\gamma} + \|F(u)\|_{L^{p'}(I,\dot{H}^\gamma_{q'})}, \\
\|\Phi(u)-\Phi(v)\|_{L^p(I,L^q)} &\lesssim \|F(u)-F(v)\|_{L^{p'}(I,L^{q'})}.
\end{align*}
It follows from Corollary $\ref{coro fractional derivatives}$, $(\ref{define m n})$ and $(\ref{sobolev embedding})$ that
\begin{align}
\|\Phi(u)\|_{L^p(I,\dot{H}^\gamma_q)} & \lesssim \|\varphi\|_{\dot{H}^\gamma}+ T^{1-\frac{(\nu-1)(d-2\gamma)}{2\sigma}} \|u\|^\nu_{L^p(I,\dot{H}^\gamma_q)}, \label{item i subcritical sigma geq 2}\\
\|\Phi(u)-\Phi(v)\|_{L^p(I,L^q)} &\lesssim T^{1-\frac{(\nu-1)(d-2\gamma)}{2\sigma}} \Big(\|u\|^{\nu-1}_{L^p(I,\dot{H}^\gamma_q)} +\|v\|^{\nu-1}_{L^p(I,\dot{H}^\gamma_q)} \Big) \|u-v\|_{L^p(I,L^q)}. \label{uniqueness subcritical sigma geq 2}
\end{align}
This implies for all $u, v \in X$, there exists $C$ independent of $\varphi \in H^\gamma$ such that
\begin{align}
\|\Phi(u)\|_{L^p(I,\dot{H}^\gamma_q)} &\leq C \|\varphi\|_{\dot{H}^\gamma} + C T^{1-\frac{(\nu-1)(d-2\gamma)}{2\sigma}} M^\nu, \nonumber \\
d(\Phi(u),\Phi(v)) &\leq C T^{1-\frac{(\nu-1)(d-2\gamma)}{2\sigma}} M^{\nu-1}d(u,v). \nonumber
\end{align}
If we set $M=2C\|\varphi\|_{\dot{H}^\gamma}$ and choose $T>0$ small enough so that $C T^{1-\frac{(\nu-1)(d-2\gamma)}{2\sigma}} M^{\nu-1} \leq \frac{1}{2}$, then $\Phi$ is a strict contraction on $X$. Thus $\Phi$ has a unique fixed point in $X$. Since $\varphi \in H^\gamma$ and $u \in L^p(I,H^\gamma_q)$, the continuity in $H^\gamma$ follows easily from Strichartz estimates (see e.g. \cite{CazenaveWeissler}). This proves the existence of solution $u \in C(I,H^\gamma)\cap L^p(I,H^\gamma_q)$ to (NLFS).\newline
\textbf{Step 2.} Uniqueness. The uniqueness is similar to Step 2 of the proof of Theorem $\ref{theorem local wellposedness subcritical schrodinger sigma <2}$ using $(\ref{uniqueness subcritical sigma geq 2})$. Note that $\|u\|_{L^p(I,\dot{H}^\gamma_q)}$ can be small if $T$ is taken small enough.\newline
\textbf{Step 3.} Item i. The blowup alternative is easy since the time of existence depends only on $\|\varphi\|_{\dot{H}^\gamma}$. \newline
\textbf{Step 4.} Item ii. The continuous dependence is similar to that of Theorem $\ref{theorem local wellposedness subcritical schrodinger sigma <2}$. We have from Strichartz estimate $(\ref{usual inhomogeneous strichartz schrodinger})$ and $(\ref{item i subcritical sigma geq 2})$ that
\begin{align*}
\|u\|_{L^a(I,\dot{H}^\gamma_b)} &\lesssim \|\varphi\|_{\dot{H}^\gamma}+T^{1-\frac{(\nu-1)(d-2\gamma)}{2\sigma}} \|u\|^\nu_{L^p(I,\dot{H}^\gamma_q)}, \\
\|u\|_{L^a(I,L^b)} &\lesssim \|\varphi\|_{L^2} + T^{1-\frac{(\nu-1)(d-2\gamma)}{2\sigma}} \|u\|^{\nu-1}_{L^p(I,\dot{H}^\gamma_q)} \|u\|_{L^p(I,L^q)},
\end{align*}
provided that $(a,b)$ is admissible, $b<\infty$ and $\gamma_{a,b}=0$. This gives the boundedness of $u_n$ in $L^a(I,H^\gamma_b)$. The convergence in $L^a(I, L^b)$ and $H^{\gamma-\ep}$ follows similarly as in Step 4 of Theorem $\ref{theorem local wellposedness subcritical schrodinger sigma <2}$ using $(\ref{uniqueness subcritical sigma geq 2})$.
\defendproof
\paragraph{Proof of Proposition $\ref{prop global subcritical schrodinger}$.}
The assumption $(\ref{assumption global existence})$ allows us to apply Theorem $\ref{theorem local wellposedness subcritical schrodinger sigma <2}$ and Theorem $\ref{theorem local wellposedness subcritical schrodinger sigma geq 2}$ with $\gamma=\sigma/2$ and obtain the local well-posedness in $H^{\sigma/2}$. We now prove the global extension using the blowup alternative. Item i follows from the conservation of mass and energy. For Item ii and Item iii, we firstly use Gagliardo-Nirenberg's inequality (see e.g. \cite{Tao}, Appendix) with the fact that
\[
\frac{1}{\nu+1}=\frac{1}{2}-\frac{\theta \sigma}{2d} \text{ or } \theta =\frac{d(\nu-1)}{\sigma(\nu+1)}
\]
and the conservation of mass to get
\[
\|u(t)\|^{\nu+1}_{L^{\nu+1}} \lesssim \|\Lambda^{\sigma/2} u(t)\|^{\frac{d(\nu-1)}{\sigma}}_{L^2} \|u(t)\|^{\nu+1-\frac{d(\nu-1)}{\sigma}}_{L^2} = \|u(t)\|^{\frac{d(\nu-1)}{\sigma}}_{\dot{H}^{\sigma/2}}\|\varphi\|^{\nu+1-\frac{d(\nu-1)}{\sigma}}_{L^2}.
\]  
Note that here the assumption $\nu\leq 1+2\sigma/d$ ensures that $\theta \in (0,1)$. The conservation of mass then gives
\[
\frac{1}{2}\|u(t)\|^2_{\dot{H}^{\sigma/2}} = E_{\text{s}}(u(t)) -\frac{\mu}{\nu+1}\|u(t)\|^{\nu+1}_{L^{\nu+1}} \lesssim E_{\text{s}}(\varphi) -\frac{\mu}{\nu+1} \|u(t)\|^{\frac{d(\nu-1)}{\sigma}}_{\dot{H}^{\sigma/2}}\|\varphi\|^{\nu+1-\frac{d(\nu-1)}{\sigma}}_{L^2}.
\]
If $\nu\in (1,1+2\sigma/d)$ or $\frac{d(\nu-1)}{\sigma} \in (0,2)$, then $\|u(t)\|_{\dot{H}^{\sigma/2}} \leq C$. This together with the conservation of mass implies the boundedness of $\|u(t)\|_{H^{\sigma/2}}$ and Item ii follows. Item iii is treated similarly with $\|\varphi\|_{L^2}$ is small. It remains to show Item iv. By Sobolev embedding with $\frac{1}{2}\leq \frac{1}{\nu+1}+\frac{\sigma}{2d}$, we have
\[
\|\varphi\|_{L^{\nu+1}} \leq C\|\varphi\|_{H^{\sigma/2}}.
\]
This shows that $E(\varphi)$ is small if $\|\varphi\|_{H^{\sigma/2}}$ is small. Similarly,
\[
\frac{1}{2}\|u(t)\|^2_{\dot{H}^{\sigma/2}} = E_{\text{s}}(u(t)) -\frac{\mu}{\nu+1}\|u(t)\|^{\nu+1}_{L^{\nu+1}} \leq E_{\text{s}}(\varphi) + C \|u(t)\|^{\nu+1}_{H^{\sigma/2}},
\]
with $\nu+1>2$. This again implies that $\|u(t)\|_{H^{\sigma/2}}$ is bounded provided $\|\varphi\|_{H^{\sigma/2}}$ is small. This completes the proof.
\defendproof
\subsection{Local well-posedness in critical cases}
In this subsection, we give the proofs of Theorem $\ref{theorem local wellposedness scattering critical schrodinger sigma <2}$ and Theorem $\ref{theorem local wellposedness scattering schrodinger sigma geq 2}$. 
\paragraph{Proof of Theorem $\ref{theorem local wellposedness scattering critical schrodinger sigma <2}$.} Let us recall the following result which gives a good control for the nonlinear term. 
\begin{lem}[\cite{HongSire}] \label{lem control L nu-1 norm}
Let $\sigma \in (0,2)\backslash \{1\}$, $\nu$ be as in $(\ref{condition scattering schrodinger})$, $\gamma_{\emph{s}}$ as in $(\ref{critical exponent schrodinger})$. Then we have
\[
\|u\|^{\nu-1}_{L^{\nu-1}(\R, L^\infty)} \lesssim
\left\{ 
\begin{array}{l}
\|u\|^4_{L^4(\R,\dot{B}^{\gamma_{\emph{s}}-\gamma_{4,\infty}}_{\infty}} \|u\|^{\nu-5}_{L^\infty(\R,\dot{B}^{\gamma_{\emph{s}}}_{2})} \emph{ when } d=1, \\
\|u\|^p_{L^p(\R,\dot{B}^{\gamma_{\emph{s}}-\gamma_{p,p^\star}}_{p^\star})}\|u\|^{\nu-1-p}_{L^\infty(\R,\dot{B}^{\gamma_{\emph{s}}}_{2})}  \text{ where } \nu-1 > p >2 \text{ when } d=2, \\
\|u\|^2_{L^2(\R,\dot{B}^{\gamma_{\emph{s}}-\gamma_{2,2^\star}}_{2^\star})}\|u\|^{\nu-3}_{L^\infty(\R,\dot{B}^{\gamma_{\emph{s}}}_{2})}  \text{ when } d\geq 3,
\end{array}
\right.
\] 
where $p^\star= 2p/(p-2)$ and $2^\star= 2d/(d-2)$.
\end{lem}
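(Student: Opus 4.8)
\noindent \textit{Proof proposal.} The plan is to derive the estimate from a pointwise-in-time interpolation inequality, obtained by a Littlewood--Paley low--high frequency splitting, and then to integrate in time. The essential difficulty is that the endpoint embedding $\dot{B}^{\gamma_{\emph{s}}}_2 = \dot{H}^{\gamma_{\emph{s}}} \hookrightarrow L^\infty$ \emph{fails}, since $\gamma_{\emph{s}} < d/2$; thus one cannot control $\|u(t)\|_{L^\infty}$ by the energy norm alone, and must interpolate it against a Besov norm of higher integrability.

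Concretely, write $a$ and $q_1$ for the time exponent and the spatial exponent appearing on the right-hand side in each case, namely $(a,q_1) = (4,\infty)$ for $d=1$, $(a,q_1)=(p,p^\star)$ for $d=2$, $(a,q_1)=(2,2^\star)$ for $d\ge3$, and set $s_1 := \gamma_{\emph{s}} - \gamma_{a,q_1}$. Fixing $t$, I would decompose $u(t) = \sum_N P_N u(t)$ and cut at a dyadic level $N_0$. Bernstein's inequality gives $\|P_N u(t)\|_{L^\infty} \lesssim N^{d/2}\|P_N u(t)\|_{L^2}$ for the low modes $N \le N_0$ and $\|P_N u(t)\|_{L^\infty} \lesssim N^{d/q_1}\|P_N u(t)\|_{L^{q_1}}$ for the high modes $N>N_0$; summing each against the $\ell^2$-weights of the Besov norms via Cauchy--Schwarz yields
\[
\|u(t)\|_{L^\infty} \lesssim N_0^{\alpha}\,\|u(t)\|_{\dot{B}^{\gamma_{\emph{s}}}_2} + N_0^{-\beta}\,\|u(t)\|_{\dot{B}^{s_1}_{q_1}}, \qquad \alpha := \tfrac{d}{2}-\gamma_{\emph{s}}, \quad \beta := s_1 - \tfrac{d}{q_1}.
\]
The two geometric series converge exactly when $\alpha>0$ and $\beta>0$, and optimizing over $N_0 \in 2^{\Z}$ produces the Gagliardo--Nirenberg inequality
\[
\|u(t)\|_{L^\infty} \lesssim \|u(t)\|_{\dot{B}^{\gamma_{\emph{s}}}_2}^{\theta}\,\|u(t)\|_{\dot{B}^{s_1}_{q_1}}^{1-\theta}, \qquad \theta = \frac{\beta}{\alpha+\beta}.
\]

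The key bookkeeping step is a one-line computation: from $(\ref{critical exponent schrodinger})$ one has $\gamma_{\emph{s}} - d/2 = -\sigma/(\nu-1)$, so $\alpha = \sigma/(\nu-1)>0$ automatically, while the definition $(\ref{define gamma})$ of $\gamma_{a,q_1}$ makes the $d/q_1$ terms cancel,
\[
\beta = \gamma_{\emph{s}} - \gamma_{a,q_1} - \tfrac{d}{q_1} = \gamma_{\emph{s}} - \tfrac{d}{2} + \tfrac{\sigma}{a} = \sigma\Big(\tfrac{1}{a}-\tfrac{1}{\nu-1}\Big),
\]
uniformly in $q_1$. Hence $\beta>0$ is precisely the hypothesis in each case: $\nu>5$ for $d=1$ (where $a=4$), $p<\nu-1$ for $d=2$ (the restriction $2<p<\nu-1$), and $\nu>3$ for $d\ge3$ (where $a=2$); this is exactly where $(\ref{condition scattering schrodinger})$ enters. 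Moreover $\alpha+\beta = \sigma/a$, so $1-\theta = \alpha/(\alpha+\beta) = a/(\nu-1)$ and $\theta = (\nu-1-a)/(\nu-1)$.

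Finally I would raise the interpolation inequality to the power $\nu-1$ and integrate in $t$, pulling the $\dot{B}^{\gamma_{\emph{s}}}_2$-factor out of the integral in $L^\infty_t$. Since $(\nu-1)(1-\theta)=a$ and $(\nu-1)\theta = \nu-1-a$, this gives
\[
\|u\|^{\nu-1}_{L^{\nu-1}(\R,L^\infty)} \lesssim \|u\|^{\,\nu-1-a}_{L^\infty(\R,\dot{B}^{\gamma_{\emph{s}}}_2)}\,\|u\|^{\,a}_{L^a(\R,\dot{B}^{s_1}_{q_1})},
\]
which is exactly the asserted bound in each of the three cases. The main obstacle is entirely the failure of the critical embedding, resolved by the frequency interpolation above; the verification that the norms $L^a(\R,\dot{B}^{s_1}_{q_1})$ are those actually produced by the Strichartz estimate $(\ref{homogeneous full strichartz schrodinger})$ of Theorem $\ref{theorem full strichartz schrodinger}$ is routine given the identity $s_1 = \gamma_{\emph{s}} - \gamma_{a,q_1}$.
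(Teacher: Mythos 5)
Your proposal is correct: the paper itself omits the proof of this lemma (deferring to Lemma 3.5 of \cite{HongSire}), and your argument — a dyadic low--high frequency splitting with Bernstein's inequality on each block, Cauchy--Schwarz against the $\ell^2$ Besov weights, optimization over the cut-off frequency $N_0$, and then H\"older in time — is exactly the standard argument behind that cited lemma. Your bookkeeping is also right: $\alpha=\sigma/(\nu-1)$, $\beta=\sigma\big(\tfrac{1}{a}-\tfrac{1}{\nu-1}\big)$, so the convergence of the high-frequency sum is precisely where the hypotheses $\nu>5$ ($d=1$), $2<p<\nu-1$ ($d=2$) and $\nu>3$ ($d\geq 3$) of $(\ref{condition scattering schrodinger})$ enter, and the exponents $(\nu-1)\theta=\nu-1-a$, $(\nu-1)(1-\theta)=a$ reproduce the stated powers in all three cases.
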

This result is a slightly modification of Lemma 3.5 in \cite{HongSire}. The main difference is the power exponent in $\R^2$. The proof is similar to the one given there, thus we omit it. \newline
\textbf{Step 1.} Existence. We only treat for $d\geq 3$, the ones for $d=1, d=2$ are completely similar. Let us consider
\[
X := \Big\{ u \in L^\infty(I,H^{\gamma_{\text{s}}}) \cap L^2(I,B^{\gamma_{\text{s}}-\gamma_{2,2^\star}}_{2^\star}) \ | \ 
\|u\|_{L^\infty(I,\dot{H}^{\gamma_{\text{s}}})} \leq M, \|u\|_{L^2(I,\dot{B}^{\gamma_{\text{s}}-\gamma_{2,2^\star}}_{2^\star})} \leq N \Big\},
\]
equipped with the distance
\[
d(u,v):= \|u-v\|_{L^\infty(I,L^2)} + \|u-v\|_{L^2(I,\dot{B}^{-\gamma_{2,2^\star}}_{2^\star})},
\]
where $I=[0,T]$ and $T, M, N>0$ will be chosen later. One can check (see e.g. \cite{CazenaveWeissler} or \cite{Cazenave}) that $(X,d)$ is a complete metric space. Using the Duhamel formula
\begin{align}
\Phi(u)(t) = e^{-it\Lambda^\sigma} \varphi +i\mu \int_0^t e^{-i(t-s)\Lambda^\sigma} |u(s)|^{\nu-1} u(s)ds =: u_{\text{hom}}(t)+ u_{\text{inh}}(t), 
\label{duhamel formula}
\end{align}
the Strichartz estimate $(\ref{homogeneous full strichartz schrodinger})$ yields
\[
\|u_{\text{hom}}\|_{L^2(I,\dot{B}^{\gamma_{\text{s}}-\gamma_{2,2^\star}}_{2^\star})} \lesssim \|\varphi\|_{\dot{H}^{\gamma_{\text{s}}}}.
\]
A similar estimate holds for $\|u_{\text{hom}}\|_{L^\infty(I,\dot{H}^{\gamma_{\text{s}}})}$. We see that $\|u_{\text{hom}}\|_{L^2(I,\dot{B}^{\gamma_{\text{s}}-\gamma_{2,2^\star}}_{2^\star})} \leq \varepsilon$ for some $\varepsilon>0$ small enough which will be chosen later, provided that either $\|\varphi\|_{\dot{H}^{\gamma_{\text{s}}}}$ is small or it is satisfied some $T>0$ small enough by the dominated convergence theorem. Therefore, we can take $T=\infty$ in the first case and $T$ be this finite time in the second. On the other hand, using again $(\ref{homogeneous full strichartz schrodinger})$, we have
\[
\|u_{\text{inh}}\|_{L^2(I,\dot{B}^{\gamma_{\text{s}}-\gamma_{2,2^\star}}_{2^\star})} \lesssim \|F(u)\|_{L^1(I,\dot{H}^{\gamma_{\text{s}}})}.
\]
A same estimate holds for $\|u_{\text{inh}}\|_{L^\infty(I,\dot{H}^{\gamma_{\text{s}}})}$. Corollary $\ref{coro fractional derivatives}$ and Lemma $\ref{lem control L nu-1 norm}$ give
\begin{align}
\|F(u)\|_{L^1(I,\dot{H}^{\gamma_{\text{s}}})} \lesssim  \|u\|^{\nu-1}_{L^{\nu-1}(I,L^\infty)} \|u\|_{L^\infty(I,\dot{H}^{\gamma_{\text{s}}})} \lesssim \|u\|^2_{L^2(I,\dot{B}^{\gamma_{\text{s}}-\gamma_{2,2^\star}}_{2^\star})}\|u\|^{\nu-2}_{L^\infty(I,\dot{H}^{\gamma_{\text{s}}})}. \label{estimates sources 1} 
\end{align}
Similarly, we have
\begin{align}
\|F(u)&-F(v)\|_{L^1(I,L^2)} \lesssim \Big(\|u\|^{\nu-1}_{L^{\nu-1}(I,L^\infty)} + \|v\|^{\nu-1}_{L^{\nu-1}(I,L^\infty)} \Big) \|u-v\|_{L^\infty(I,L^2)} \label{uniqueness critical sigma <2} \\
&\lesssim   \Big(\|u\|^2_{L^2(I,\dot{B}^{\gamma_{\text{s}}-\gamma_{2,2^\star}}_{2^\star})} \|u\|^{\nu-3}_{L^\infty(I,\dot{H}^{\gamma_{\text{s}}})} + \|v\|^2_{L^2(I,\dot{B}^{\gamma_{\text{s}}-\gamma_{2,2^\star}}_{2^\star})} \|v\|^{\nu-3}_{L^\infty(I,\dot{H}^{\gamma_{\text{s}}})}\Big)\|u-v\|_{L^\infty(I,L^2)}.  \nonumber
\end{align}
This implies for all $u, v \in X$, there exists $C>0$ independent of $\varphi \in H^{\gamma_{\text{s}}}$ such that
\begin{align*}
\|\Phi(u)\|_{L^2(I,\dot{B}^{\gamma_{\text{s}}-\gamma_{2,2^\star}}_{2^\star})} &\leq \varepsilon +CN^2M^{\nu-2}, \\
\|\Phi(u)\|_{L^\infty(I,\dot{H}^{\gamma_{\text{s}}})} &\leq C\|\varphi\|_{\dot{H}^{\gamma_{\text{s}}}} +CN^2M^{\nu-2}, \\
d(\Phi(u),\Phi(v)) &\leq CN^2M^{\nu-3} d(u,v).
\end{align*}
Now by setting $N=2\varepsilon$ and $M=2C\|\varphi\|_{\dot{H}^{\gamma_{\text{s}}}}$ and choosing $\varepsilon>0$ small enough such that $CN^2M^{\nu-3} \leq \min\{1/2, \varepsilon/M \}$, we see that $X$ is stable by $\Phi$ and $\Phi$ is a contraction on $X$. By the fixed point theorem, there exists a unique solution $u \in X$ to (NLFS). Note that when $\|\varphi\|_{\dot{H}^{\gamma_{\text{s}}}}$ is small enough, we can take $T=\infty$. \newline
\textbf{Step 2.} Uniqueness. The uniqueness in $C^\infty(I,H^{\gamma_{\text{s}}}) \cap L^2(I, B^{\gamma-\gamma_{2,2^\star}}_{2^\star})$ follows as in Step 2 of the proof of Theorem $\ref{theorem local wellposedness subcritical schrodinger sigma <2}$ using $(\ref{uniqueness critical sigma <2})$. Here $\|u\|_{L^2(I,\dot{B}^{\gamma_{\text{s}}-\gamma_{2,2^\star}}_{2^\star})}$ can be small as $T$ is small.\newline
\textbf{Step 3.} Scattering. The global existence when $\|\varphi\|_{\dot{H}^{\gamma_\text{s}}}$ is small is given in Step 1. It remains to show the scattering property. Thanks to $(\ref{estimates sources 1})$, we see that
\begin{align}
\|e^{it_2\Lambda^\sigma}u(t_2)&- e^{it_1\Lambda^\sigma}u(t_1)\|_{\dot{H}^{\gamma_{\text{s}}}} =\Big\|i\mu \int_{t_1}^{t_2} e^{is\Lambda^\sigma} (|u|^{\nu-1}u)(s) ds\Big\|_{\dot{H}^{\gamma_{\text{s}}}} \nonumber \\
&\leq \|F(u)\|_{L^1([t_1,t_2], \dot{H}^{\gamma_{\text{s}}})} \lesssim  \|u\|^2_{L^2([t_1,t_2],\dot{B}^{\gamma_{\text{s}}-\gamma_{2,2^\star}}_{2^\star})}\|u\|^{\nu-2}_{L^\infty([t_1,t_2],\dot{H}^{\gamma_{\text{s}}})} \rightarrow 0 \label{scattering estimate schrodinger sigma <2(1)}
\end{align}
as $t_1, t_2 \rightarrow + \infty$. We have from $(\ref{uniqueness critical sigma <2})$ that
\begin{align}
\|e^{it_2\Lambda^\sigma}u(t_2)- e^{it_1\Lambda^\sigma}u(t_1)\|_{L^2} \lesssim  \|u\|^2_{L^2([t_1,t_2],\dot{B}^{\gamma_{\text{s}}-\gamma_{2,2^\star}}_{2^\star})}\|u\|^{\nu-3}_{L^\infty([t_1,t_2],\dot{H}^{\gamma_{\text{s}}})} \|u\|_{L^\infty([t_1,t_2],L^2)}, \label{scattering estimate schrodinger sigma <2(2)}
\end{align}
which also tends to zero as $t_1, t_2 \rightarrow +\infty$. This implies that the limit 
\[
\varphi^+:= \lim_{t \rightarrow + \infty} e^{it\Lambda^\sigma} u(t)
\]
exists in $H^{\gamma_{\text{s}}}$. Moreover, we have
\[
u(t)-e^{-it\Lambda^\sigma}\varphi^+ =-i\mu\int_{t}^{+ \infty} e^{-i(t-s)\Lambda^\sigma}F(u(s)) ds. 
\]
The unitary property of $e^{-it\Lambda^\sigma}$ in $L^2$, $(\ref{scattering estimate schrodinger sigma <2(1)})$ and $(\ref{scattering estimate schrodinger sigma <2(2)})$ imply that $\|u(t)-e^{-it\Lambda^\sigma}\varphi^+\|_{H^{\gamma_{\text{s}}}} \rightarrow 0$ when $t \rightarrow + \infty$. This completes the proof of Theorem $\ref{theorem local wellposedness scattering critical schrodinger sigma <2}$.
\defendproof
\paragraph{Proof of Theorem $\ref{theorem local wellposedness scattering schrodinger sigma geq 2}$.} The proof is similar to the one of Theorem $\ref{theorem local wellposedness scattering critical schrodinger sigma <2}$. Thus, we only give the main steps. It is easy to check that the admissible pair $(p,q)$ given in $(\ref{define p q critical})$ satisfies $\gamma_{p,q}=0=\gamma_{p',q'}+\sigma$. We next choose $n$ so that
\[
\frac{1}{q'}=\frac{1}{q}+\frac{\nu-1}{n} \text{ or } n = \frac{dq}{d-\gamma_\text{s} q}.
\] 
The Sobolev embedding gives
\begin{align}
\|u\|_{L^p(I,L^n)} \lesssim \|u\|_{L^p(I,\dot{H}^{\gamma_\text{s}})}. \label{sobolev embedding sigma geq 2}
\end{align}
\textbf{Step 1.} Existence. We will show that the functional $\Phi$ given in $(\ref{duhamel formula})$ is a contraction on 
\[
X:=\Big\{u \in L^p(I,H^{\gamma_\text{s}}_q) \ | \ \|u\|_{L^p(I,\dot{H}^{\gamma_\text{s}}_q)} \leq M \Big\},
\]
which equipped with the distance
\[
d(u,v)=\|u-v\|_{L^p(I,L^q)},
\]
where $I=[0,T]$ and $M,T>0$ to be determine. The Strichartz estimate $(\ref{usual inhomogeneous strichartz schrodinger})$ implies
\[
\|u_\text{hom}\|_{L^p(I,\dot{H}^{\gamma_\text{s}}_q)} \lesssim \|\varphi\|_{\dot{H}^{\gamma_\text{s}}}.
\]
This shows that $\|u_\text{hom}\|_{L^p(I,\dot{H}^{\gamma_\text{s}}_q)} \leq \varepsilon$ for some $\varepsilon>0$ small enough provided that $T$ is small or $\|\varphi\|_{\dot{H}^{\gamma_\text{s}}}$ is small. Similarly, we have
\[
\|u_\text{inh}\|_{L^p(I,\dot{H}^{\gamma_\text{s}}_q)} \lesssim \|F(u)\|_{L^{p'}(I,\dot{H}^{\gamma_\text{s}}_{q'})}.
\]
It follows from Corollary $\ref{coro fractional derivatives}$, the choice of $n$ and $(\ref{sobolev embedding sigma geq 2})$ that
\begin{align}
\|F(u)\|_{L^{p'}(I,\dot{H}^{\gamma_\text{s}}_{q'})} &\lesssim \|u\|^\nu_{L^p(I,\dot{H}^{\gamma_\text{s}}_q)}, \label{scattering estimate sigma geq 2} \\
\|F(u)-F(v)\|_{L^{p'}(I,L^{q'})} &\lesssim \Big(\|u\|^{\nu-1}_{L^p(I,\dot{H}^{\gamma_\text{s}}_q)} +\|v\|^{\nu-1}_{L^p(I,\dot{H}^{\gamma_\text{s}}_q)} \Big) \|u-v\|_{L^p(I,L^q)}. \label{uniqueness critical sigma geq 2}
\end{align}
Thus, the Strichartz estimate $(\ref{usual inhomogeneous strichartz schrodinger})$ implies for all $u, v \in X$, there exists $C$ independent of $\varphi \in H^{\gamma_\text{s}}$ such that
\begin{align*}
\|\Phi(u)\|_{L^p(I,\dot{H}^{\gamma_\text{s}}_q)} &\leq \varepsilon + C M^\nu, \nonumber \\
d(\Phi(u),\Phi(v)) &\leq C M^{\nu-1}d(u,v).
\end{align*}
If we choose $\varepsilon, M>0$ small so that
\[
C M^{\nu-1}\leq \frac{1}{2}, \quad \varepsilon + \frac{M}{2} \leq M,
\]
then $X$ is stable by $\Phi$ and $\Phi$ a contraction on $X$. Using the argument as in Step 1 of the proof of Theorem $\ref{theorem local wellposedness subcritical schrodinger sigma geq 2}$, we obtain the existence of solution $u \in C(I,H^{\gamma_\text{s}})\cap L^p(I,H^{\gamma_\text{s}}_q)$ to (NLFS). Note that when $\|\varphi\|_{\dot{H}^{\gamma_\text{s}}}$ is small, we can take $T=\infty$.\newline
\textbf{Step 2.} Uniqueness. It follows easily from $(\ref{uniqueness critical sigma geq 2})$ by the same argument given in Step 2 of the proof of Theorem $\ref{theorem local wellposedness subcritical schrodinger sigma <2}$ using $(\ref{uniqueness critical sigma geq 2})$. \newline
\textbf{Step 3.} Scattering. The global existence when $\|\varphi\|_{\dot{H}^{\gamma_\text{s}}}$ is small follows from Step 1. The scattering is treated similarly as in Step 3 of the proof of Theorem $\ref{theorem local wellposedness scattering critical schrodinger sigma <2}$. The main point is to show
\begin{align}
\|e^{it_2\Lambda^\sigma} u(t_2)-e^{it_1\Lambda^\sigma}u(t_1)\|_{H^{\gamma_\text{s}}} \rightarrow 0 \label{scattering critical schrodinger sigma geq 2}
\end{align}
as $t_1, t_2 \rightarrow +\infty$. To do so, we use the adjoint estimate to the homogeneous Strichartz estimate, namely $\varphi \in L^2 \mapsto e^{-it\Lambda^\sigma} \varphi \in L^p(\R,L^q)$ to get
\begin{align*}
\|e^{it_2\Lambda^\sigma} u(t_2)-e^{it_1\Lambda^\sigma}u(t_1)\|_{\dot{H}^{\gamma_\text{s}}} & =\Big\|
i\mu \int_{t_1}^{t_2} e^{is\Lambda^\sigma} (|u|^{\nu-1}u)(s)ds\Big\|_{\dot{H}^{\gamma_\text{s}}} \\
&= \Big\| \int_\R \Lambda^{\gamma_\text{s}} e^{is\Lambda^\sigma} (\mathds{1}_{[t_1,t_2]}|u|^{\nu-1}u)(s)ds \Big\|_{L^2} \\
&\lesssim \|F(u)\|_{L^{p'}([t_1,t_2],\dot{H}^{\gamma_\text{s}}_{q'})}.
\end{align*}
Similarly,
\[
\|e^{it_2\Lambda^\sigma} u(t_2)-e^{it_1\Lambda^\sigma}u(t_1)\|_{L^2} \lesssim  \|F(u)\|_{L^{p'}([t_1,t_2],L^{q'})}.
\]
Using $(\ref{scattering estimate sigma geq 2})$ and $(\ref{uniqueness critical sigma geq 2})$, we get $(\ref{scattering critical schrodinger sigma geq 2})$. The proof is complete.
\defendproof 
\section{Nonlinear fractional wave equations}
\setcounter{equation}{0}
\subsection{Local well-posedness in subcritical cases}
In this subsection, we will give the proofs of Theorem $\ref{theorem local wellposedness subcritical wave}$ and Theorem $\ref{theorem local wellposedness wave H sigma subcritical}$.
\paragraph{Proof of Theorem $\ref{theorem local wellposedness subcritical wave}$.} 
The proof is very close to the one of Theorem $\ref{theorem local wellposedness subcritical schrodinger sigma <2}$. Let $(p,q)$ be the fractional pair in the proof of Theorem $\ref{theorem local wellposedness subcritical schrodinger sigma <2}$. \newline
\textbf{Step 1.} Existence. We will solve (NLFW) in
\[
Y:= \Big\{ v \in C(I,H^\gamma) \cap C^1(I, H^{\gamma-\sigma}) \cap L^p(I,H^{\gamma - \gamma_{p,q}}_q) \ | \  \|[v]\|_{L^\infty(I,H^\gamma)}+ \|v\|_{L^p(I, H^{\gamma-\gamma_{p,q}}_q)} \leq M \Big\},
\]
equipped with the distance
\[
d(v,w) := \|[v-w]\|_{L^\infty(I,L^2)} + \|v-w\|_{L^p(I,H^{- \gamma_{p,q}}_q)},
\]
where $I=[0,T]$ and $T, M>0$ will be chosen later. The persistence of regularity implies that $(Y,d)$ is a complete metric space. By the Duhamel formula, it suffices to prove that the functional 
\begin{align}
\Psi(v)(t)= \cos (t\Lambda^\sigma) \varphi + \frac{\sin (t\Lambda^\sigma)}{\Lambda^\sigma} \phi - \mu \int_0^t \frac{\sin((t-s)\Lambda^\sigma)}{\Lambda^\sigma}|v(s)|^{\nu-1}v(s)ds \label{duhamel formula wave}
\end{align}
is a contraction on $Y$. The local Strichartz estimates $(\ref{local strichartz wave})$ imply
\begin{align*}
\|[\Psi(v)]\|_{L^\infty(I,H^\gamma)}+ \|\Psi(v)\|_{L^p(I, H^{\gamma-\gamma_{p,q}}_q)} & \lesssim \|[v](0)\|_{H^\gamma}+ \|F(v)\|_{L^1(I,H^{\gamma-\sigma})} \\
&\lesssim \|[v](0)\|_{H^\gamma}+ \|F(v)\|_{L^1(I,H^{\gamma})}, 
\end{align*}
where $F(v) = |v|^{\nu-1} v$. 
As in the proof of Theorem $\ref{theorem local wellposedness subcritical schrodinger sigma <2}$, Corollary $\ref{coro fractional derivatives}$ implies
\[
\|F(v)\|_{L^1(I,H^\gamma)} \lesssim  T^{1-\frac{\nu-1}{p}} \|v\|^{\nu-1}_{L^p(I,L^\infty)} \|v\|_{L^\infty(I,H^\gamma)}.
\]
Similarly, 
\begin{align}
\|F(v)-F(w)\|_{L^1(I,L^2)} \lesssim  T^{1-\frac{\nu-1}{p}} \Big(\|v\|^{\nu-1}_{L^p(I,L^\infty)} + \|w\|^{\nu-1}_{L^p(I,L^\infty)}\Big) \|v-w\|_{L^\infty(I,L^2)}. \label{uniqueness subcritical wave}
\end{align}
The Sobolev embedding $L^p(I,H^{\gamma-\gamma_{p,q}}_q) \subset L^p(I, L^\infty)$ then implies that
\[
\|[\Psi(v)]\|_{L^\infty(I,H^\gamma)}+ \|\Psi(v)\|_{L^p(I, H^{\gamma-\gamma_{p,q}}_q)} \lesssim \|[v](0)\|_{H^\gamma}+  T^{1-\frac{\nu-1}{p}} \|v\|^{\nu-1}_{L^p(I,H^{\gamma-\gamma_{p,q}}_q)} \|v\|_{L^\infty(I,H^\gamma)},
\]
and 
\[
d(\Psi(v),\Psi(w)) \lesssim T^{1-\frac{\nu-1}{p}} \Big(\|v\|^{\nu-1}_{L^p(I,H^{\gamma-\gamma_{p,q}}_q)} + \|w\|^{\nu-1}_{L^p(I,H^{\gamma-\gamma_{p,q}}_q)}\Big)d(v,w).
\]
Therefore, for all $v, w \in Y$, there exists a constant $C>0$ independent of $\varphi,\phi$ such that 
\[
\|[\Psi(v)]\|_{L^\infty(I,H^\gamma)}+ \|\Psi(v)\|_{L^p(I, H^{\gamma-\gamma_{p,q}}_q)} \leq C\|[v](0)\|_{H^\gamma}+  CT^{1-\frac{\nu-1}{p}} M^\nu,
\]
and 
\[
d(\Psi(v),\Psi(w)) \leq C T^{1-\frac{\nu-1}{p}} M^{\nu-1} d(v,w).
\]
Setting $M=2C \|[v](0)\|_{H^\gamma}$ and choosing $T>0$ small enough so that $CT^{1-\frac{\nu-1}{p}} M^{\nu-1} \leq \frac{1}{2}$, we see that $Y$ is stable by $\Psi$ and $\Psi$ is a contraction on $Y$. By the fixed point theorem, there exists a unique solution $v \in Y$ to (NLFW). \newline
\text{Step 2.} Uniqueness. The uniqueness of solution $v \in C(I, H^\gamma) \cap C^1(I, H^{\gamma-\sigma}) \cap L^p(I, L^\infty)$ follows as in the proof of Theorem $\ref{theorem local wellposedness subcritical schrodinger sigma <2}$ using $(\ref{uniqueness subcritical wave})$. \newline
\textbf{Step 3.} The blowup alternative follows easily since the time of existence depends only on $\|[v](0)\|_{H^\gamma}$. \newline
\textbf{Step 4.} The continuous dependence is similar to that of Theorem $\ref{theorem local wellposedness subcritical schrodinger sigma <2}$. \defendproof
\paragraph{Proof of Theorem $\ref{theorem local wellposedness wave H sigma subcritical}$.} 
1. Let us firstly consider Item 1. We note (see Remark $\ref{rem admissible condition}$) that under the assumptions $(\ref{assumption sigma <2 subcritical 1}), (\ref{assumption sigma <2 subcritical 2})$ and $(\ref{assumption sigma <2 subcritical 3})$ (see Remark $\ref{rem admissible condition}$), the pair $(p,q)$ given in $(\ref{define p q sigma subcritical wave sigma <2})$ is admissible satisfying $\gamma_{p,q}=\sigma=\gamma_{1,2}+2\sigma$ and $1-\nu/p>0$. Consider now 
\[
Y:=\Big\{ v \in C(I, \dot{H}^\sigma) \cap C^1(I,L^2) \cap L^p(I, L^q) \ | \ \|[v]\|_{L^\infty(I,\dot{H}^\sigma)} + \|v\|_{L^p(I,L^q)} \leq M \Big\},
\] 
equipped with the distance
\[
d(v,w):= \|[v-w]\|_{L^\infty(I,\dot{H}^\sigma)} + \|v-w\|_{L^p(I, L^q)},
\]
where $I=[0,T]$ and $M>0$ will be chosen later. We will prove that the functional $(\ref{duhamel formula wave})$ is a contraction on $Y$. The Strichartz estimate $(\ref{usual inhomogeneous strichartz wave})$ implies
\begin{align}
\|[\Psi(v)]\|_{L^\infty(I,\dot{H}^\sigma)} + \|\Psi(v)\|_{L^p(I,L^q)} &\lesssim \|[v](0)\|_{\dot{H}^\sigma} + \|F(v)\|_{L^1(I, L^2)} =  \|[v](0)\|_{\dot{H}^\sigma}  + \|v\|^{\nu}_{L^\nu(I, L^{2\nu})} \nonumber \\
&\lesssim \|[v](0)\|_{\dot{H}^\sigma}  + T^{1-\frac{\nu}{p}}\|v\|^{\nu}_{L^p(I, L^q)}. \nonumber
\end{align}
Similarly, 
\begin{align}
\|F(v)-F(w)\|_{L^1(I, L^2)}& \lesssim \Big(\|v\|^{\nu-1}_{L^\nu(I,L^{2\nu})} + \|w\|^{\nu-1}_{L^\nu(I,L^{2\nu})}\Big) \|v-w\|_{L^\nu(I,L^{2\nu})} \nonumber \\
& \lesssim T^{1-\frac{\nu}{p}}\Big(\|v\|^{\nu-1}_{L^p(I, L^q)} + \|v\|^{\nu-1}_{L^p(I,L^q)} \Big) \|v-w\|_{L^p(I, L^q)}. \label{continuous dependence wave}
\end{align}
This implies that for all $v, w \in Y$, there exists $C>0$ independent of $(\varphi,\phi)\in \dot{H}^\sigma \times L^2$ such that,
\begin{align}
\|[\Psi(v)]\|_{L^\infty(I,\dot{H}^\sigma)} +\|\Psi(v)\|_{L^p(I,L^q)} &\leq C\|[v](0)\|_{\dot{H}^\sigma}+ C T^{1-\frac{\nu}{p}} M^\nu, \nonumber \\
d(\Psi(v),\Psi(w)) &\leq C T^{1-\frac{\nu}{p}} M^{\nu-1}d(v,w). \nonumber
\end{align}
By setting $M=2C \|[v](0)\|_{\dot{H}^\sigma}$, choosing $T>0$ small enough so that $CT^{1-\frac{\nu}{p}} M^{\nu-1} \leq \frac{1}{2}$ and arguing as in the proof of Theorem $\ref{theorem local wellposedness subcritical wave}$, we have the existence and uniqueness of solution $v \in C(I, \dot{H}^\sigma) \cap C^1(I,L^2) \cap L^p(I, L^q)$. The blowup alternative is immediate since the time of existence only depends on $\|[v](0)\|_{\dot{H}^\sigma}$. Finally, the continuous dependence is proved by using $(\ref{continuous dependence wave})$. \newline
2. The proof of Item 2 is similar, thus we only give the main steps. It is easy to see that under the assumption $(\ref{assumption sigma subcritical sigma geq 2})$, the pair $(p,q)$ defined in $(\ref{define p q sigma subcritical wave sigma geq 2})$ is admissible and $\gamma_{p,q}=\sigma$. Since $\nu \in [d\sigma^*/(d+\sigma),\sigma^*)$, we see that $q/\nu \in (1,2]$. This allows to choose $b \in [2,\infty]$ so that $b'=q/\nu$. We next choose $a \in [2,\infty]$ such that $(a,b)$ is admissible and $\gamma_{a,b}=-\gamma_{a',b'}-\sigma = 0$ or $\gamma_{a',b'}+2\sigma=\sigma$. Thanks to the fact that $\nu<\sigma^*$, we see that
\[
\frac{1}{a'}-\frac{\nu}{p}>0.
\] 
This shows that $\frac{1}{a'}=\frac{1}{p}+\frac{\nu-1}{m}$ with
\[
\frac{\nu-1}{m}>\frac{\nu-1}{p}.
\]
We will prove that $\Psi$ is a contraction on
\[
Y:=\Big\{ v \in  v \in C(I, \dot{H}^\sigma) \cap C^1(I,L^2) \cap L^p(I, L^q) \ | \ \|[v]\|_{L^\infty(I,\dot{H}^\sigma)} + \|v\|_{L^p(I,L^q)} \leq M \Big\},
\] 
equipped with the distance
\[
d(v,w):= \|[v-w]\|_{L^\infty(I,\dot{H}^\sigma)} + \|v-w\|_{L^p(I, L^q)}.
\]
The Strichartz estimate $(\ref{usual inhomogeneous strichartz wave})$ implies
\begin{align}
\|[\Psi(v)]\|_{L^\infty(I,\dot{H}^\sigma)} + \|\Psi(v)\|_{L^p(I,L^q)} &\lesssim \|[v](0)\|_{\dot{H}^\sigma} + \|F(v)\|_{L^{a'}(I, L^{b'})} \nonumber \\
&=  \|[v](0)\|_{\dot{H}^\sigma}  + \|v\|^{\nu-1}_{L^m(I, L^q)}\|v\|_{L^p(I,L^q)} \nonumber \\
&\lesssim \|[v](0)\|_{\dot{H}^\sigma}  + T^{\frac{\nu-1}{m}-\frac{\nu-1}{p}}\|v\|^\nu_{L^p(I, L^q)}. \nonumber
\end{align}
Similarly, 
\begin{align*}
\|F(v)-F(w)\|_{L^{a'}(I, L^{b'})}& \lesssim \Big(\|v\|^{\nu-1}_{L^m(I,L^q)} + \|w\|^{\nu-1}_{L^m(I,L^q)}\Big) \|v-w\|_{L^p(I,L^q)} \nonumber \\
& \lesssim T^{\frac{\nu-1}{m}-\frac{\nu-1}{p}} \Big(\|v\|^{\nu-1}_{L^p(I, L^q)} + \|v\|^{\nu-1}_{L^p(I,L^q)} \Big)  \|v-w\|_{L^p(I, L^q)}. \nonumber 
\end{align*}
This implies that for all $v, w \in Y$, there exists $C>0$ independent of $(\varphi,\phi)\in \dot{H}^\sigma \times L^2$ such that,
\begin{align}
\|[\Psi(v)]\|_{L^\infty(I,\dot{H}^\sigma)} +\|\Psi(v)\|_{L^p(I,L^q)} &\leq C\|[v](0)\|_{\dot{H}^\sigma}+ C T^{\frac{\nu-1}{m}-\frac{\nu-1}{p}} M^\nu, \nonumber \\
d(\Psi(v),\Psi(w)) &\leq C T^{\frac{\nu-1}{m}-\frac{\nu-1}{p}} M^{\nu-1}d(v,w). \nonumber
\end{align}
The conclusion is similar as in Item 1. The proof is now complete.
\defendproof \newline
\begin{rem} \label{rem admissible condition}
Let us give some comments on the assumptions $(\ref{assumption sigma <2 subcritical 1}), (\ref{assumption sigma <2 subcritical 2})$ and $(\ref{assumption sigma <2 subcritical 3})$.  
In order to make $(p,q)$ defined in $(\ref{define p q sigma subcritical wave sigma <2})$ to be a admissible satisfying $\gamma_{p,q}=\sigma=\gamma_{1,2}+2\sigma$ and $1-\nu/p>0$, we need the following conditions: \newline
- A first condition is $(d-2\sigma)\nu>d$ which ensures $p$ is a positive number. \newline
- We next one is $p\geq 4$ when $d=1$ and $p\geq 2$ when $d\geq 2$. Thus $(2-5\sigma)\nu \leq 2$ when $d=1$ and $(d-3\sigma)\nu \leq d$ when $d\geq 2$. \newline
- We also need $\frac{2}{p}+\frac{d}{q} \leq \frac{d}{2}$ which implies $(2d-4\sigma-d\sigma)\nu \leq 2d-d\sigma$. When $d=1$, we have $(2-5\sigma)\nu \leq 2-\sigma$. \newline
- Condition $\gamma_{p,q}=\sigma=\gamma_{1,2}+2\sigma$ is easy to check. \newline
- Finally, we have $(d-2\sigma)\nu< d+2\sigma$ which yields $1-\nu/p>0$. \newline
Therefore, we need
\[
\left\{
\begin{array}{l}
(1-2\sigma)\nu >1 \\
(1-2\sigma) \nu <1+2\sigma \\
(2-5\sigma) \nu \leq 2-\sigma
\end{array} 
\right.
\text{ when } d=1 \text{ and }
\left\{
\begin{array}{l}
(d-2\sigma) \nu >d \\
(d-2\sigma) \nu < d+2\sigma \\
(d-3\sigma)\nu \leq d \\
(2d-4\sigma-d\sigma) \nu \leq 2d-d\sigma
\end{array}
\right. \text{ when } d\geq 2.
\]
One can solve easily the above systems of inequalities and obtain $(\ref{assumption sigma <2 subcritical 1})$, $(\ref{assumption sigma <2 subcritical 2})$ and $(\ref{assumption sigma <2 subcritical 3})$.
\end{rem}
\subsection{Local well-posedness in critical cases}
In this subsection, we will give the proofs of Theorem $\ref{theorem local wellposedness critical scattering small data wave}$ and Theorem $\ref{theorem local wellposedness wave H sigma critical}$. \paragraph{Proof of Theorem $\ref{theorem local wellposedness critical scattering small data wave}$.}
1. Let us treat the first case $(\ref{assumption critical wave 1})$. Consider 
\begin{multline}
Y:=\Big\{ v \in  C(I, \dot{H}^{\gamma_{\text{w}}}) \cap C^1(I,\dot{H}^{\gamma_{\text{w}}-\sigma}) \cap L^p(I,L^p) \cap L^a(I, \dot{H}^{\gamma_{\text{w}}-\frac{\sigma}{2}}_a) \Big. \\
\Big. \|[v]\|_{L^\infty(I,\dot{H}^{\gamma_{\text{w}}})} \leq M, \|v\|_{L^p(I,L^p)} + \|v\|_{L^a(I, \dot{H}^{\gamma_{\text{w}}-\frac{\sigma}{2}}_a)} \leq N \Big\} \nonumber
\end{multline}
equipped with the distance
\[
d(v,w):= \|[v-w]\|_{L^\infty(I,\dot{H}^{\gamma_{\text{w}}})} + \|v-w\|_{L^p(I,L^p)} + \|v-w\|_{L^a(I, \dot{H}^{\gamma_{\text{w}}-\frac{\sigma}{2}}_a)}, 
\]
where $(p,a)$ given in $(\ref{define p a})$, $I=[0,T]$ and $T, M, N>0$ will be chosen later. Using the Duhamel's formula, it suffices to show that the functional
\[
\Psi(v)(t)= \cos (t\Lambda^\sigma) \varphi + \frac{\sin (t\Lambda^\sigma)}{\Lambda^\sigma} \phi - \mu \int_0^t \frac{\sin((t-s)\Lambda^\sigma)}{\Lambda^\sigma}|v(s)|^{\nu-1}v(s)ds =: v_{\text{hom}}(t)+v_{\text{inh}}(t),
\]
is a contraction on $Y$, where $v_{\text{hom}}(t)$ is the sum of two first terms and $v_{\text{inh}}(t)$ is the last term. It is easy to check that under the assumptions $(\ref{assumption critical wave 1})$, $(p,p)$ and $(a,a)$ are admissible with $\gamma_{p,p}=\gamma_{\text{w}}$ and $\gamma_{a,a} =\sigma/2$. The Strichartz estimate $(\ref{usual inhomogeneous strichartz wave})$ then implies
\begin{align}
\|v_{\text{hom}}\|_{L^p(I,L^p)} + \|v_{\text{hom}}\|_{L^a(I, \dot{H}^{\gamma_{\text{w}}-\frac{\sigma}{2}}_a)} \lesssim \|[v](0)\|_{\dot{H}^{\gamma_{\text{w}}}}. \label{homogeneous part wave critical}
\end{align}
Thus the left hand side of $(\ref{homogeneous part wave critical})$ can be taken smaller than $\varepsilon$ for some $\varepsilon>0$ small enough provided that either $\|[v](0)\|_{\dot{H}^{\gamma_{\text{w}}}}$ is small or it is true for some $T>0$ small enough by the dominated convergence. On the other hand, the homogeneous Sobolev embedding with the fact that $\gamma_{\text{w}}-\sigma/2 \geq 0$ implies $L^p(I,\dot{H}^{\gamma_{\text{w}}-\frac{\sigma}{2}}_q) \subset L^p(I,L^p)$ where $d/q=d/p + (\gamma_{\text{w}}-\sigma/2)$. For such $q$, we see that $(p,q)$ is admissible satisfying
\[
\gamma_{p,q} = \frac{\sigma}{2}=\gamma_{a,a} =\gamma_{a',a'} +2\sigma.
\]
The Sobolev embedding and Strichartz estimate $(\ref{usual inhomogeneous strichartz wave})$ then yield
\[
\|v_{\text{inh}}\|_{L^p(I,L^p)} + \|v_{\text{inh}}\|_{L^a(I, \dot{H}^{\gamma_{\text{w}}-\frac{\sigma}{2}}_a)} \lesssim \|F(v)\|_{L^{a'}(I, \dot{H}^{\gamma_{\text{w}}-\frac{\sigma}{2}}_{a'})}.
\]
Using $(\ref{assumption smoothness critical wave})$ and the fact that $\frac{1}{a'} =\frac{1}{a}+\frac{\nu-1}{p}$, Corollary $\ref{coro fractional derivatives}$ gives
\[
\|F(v)\|_{L^{a'}(I, \dot{H}^{\gamma_{\text{w}}-\frac{\sigma}{2}}_{a'})} \lesssim \|v\|^{\nu-1}_{L^p(I,L^p)} \|v\|_{L^a(I, \dot{H}^{\gamma_{\text{w}}-\frac{\sigma}{2}}_a)}. 
\]
Similarly,
\begin{align}
\|F(v)&-F(w)\|_{L^{a'}(I, \dot{H}^{\gamma_{\text{w}}-\frac{\sigma}{2}}_{a'})}  \lesssim  (\|v\|^{\nu-1}_{L^p(I,L^p)}+\|w\|^{\nu-1}_{L^p(I,L^p)})\|u-v\|_{L^a(I,\dot{H}^{\gamma_\text{w}-\frac{\sigma}{2}}_a)} \nonumber \\
&+(\|v\|^{\nu-2}_{L^p(I,L^p)}+\|w\|^{\nu-2}_{L^p(I,L^p)})(\|v\|_{L^a(I,\dot{H}^{\gamma_\text{w}-\frac{\sigma}{2}}_a)} + \|w\|_{L^a(I,\dot{H}^{\gamma_\text{w}-\frac{\sigma}{2}}_a)}) \|v-w\|_{L^p(I,L^p)}. \label{uniqueness critical wave}
\end{align}
Similarly, by rewriting $\gamma_{\text{w}}=\gamma_{\text{w}}-\frac{\sigma}{2}+ \gamma_{a,a}$, the Strichartz estimate $(\ref{usual inhomogeneous strichartz wave})$ also gives
\begin{align*}
\|[\Psi(v)]\|_{L^\infty(I,\dot{H}^{\gamma_{\text{w}}})}  
\lesssim \|[v](0)\|_{\dot{H}^{\gamma_{\text{w}}}}+ \|v\|^{\nu-1}_{L^p(I,L^p)} \|v\|_{L^a(I, \dot{H}^{\gamma_{\text{w}}-\frac{\sigma}{2}}_a)}. \nonumber
\end{align*}
This implies for all $v, w \in Y$, there exists $C>0$ independent of $(\varphi, \phi)\in \dot{H}^{\gamma_\text{w}} \times \dot{H}^{\gamma_\text{w}-\sigma}$ such that
\begin{align*}
\|\Psi(v)\|_{L^p(I,L^p)} + \|\Psi(v)\|_{L^a(I, \dot{H}^{\gamma_{\text{w}}-\frac{\sigma}{2}}_a)} &\leq \varepsilon +CN^\nu, \\
\|[\Psi(v)]\|_{L^\infty(I, \dot{H}^{\gamma_{\text{w}}})} &\leq C\|[v](0)\|_{\dot{H}^{\gamma_{\text{w}}}} +CN^\nu, \\
d(\Psi(v),\Psi(w)) &\leq CN^{\nu-1} d(v,w).
\end{align*}
Now by setting $N=2\varepsilon$ and $M=2C\|[v](0)\|_{\dot{H}^{\gamma_{\text{w}}}}$ and choosing $\varepsilon>0$ small enough (provided either $T$ is small or $\|[v](0)\|_{\dot{H}^{\gamma_{\text{w}}}}$ is small) such that
\[
CN^{\nu} \leq \min \Big\{ \varepsilon, C\|[v](0)\|_{\dot{H}^{\gamma_{\text{w}}}}\Big\}, \quad CN^{\nu-1} \leq \frac{1}{2},
\]
we see that $Y$ is stable by $\Psi$ and $\Psi$ is a contraction on $Y$. By the fixed point theorem, there exists a unique solution $v \in Y$ to (NLFW). Note that when $\|[v](0)\|_{\dot{H}^{\gamma_{\text{w}}}}$ is small enough, we can take $T=\infty$. The uniqueness in $C(I, \dot{H}^{\gamma_{\text{w}}}) \cap C^1(I,\dot{H}^{\gamma_{\text{w}}-\sigma}) \cap L^p(I,L^p) \cap L^a(I, \dot{H}^{\gamma_{\text{w}}-\frac{\sigma}{2}}_a)$ follows as in Theorem $\ref{theorem local wellposedness subcritical schrodinger sigma <2}$ by using $(\ref{uniqueness critical wave})$. Here $\|v\|_{L^p(I,L^p)}$ and  $\|v\|_{L^a(I, \dot{H}^{\gamma_{\text{w}}-\frac{\sigma}{2}}_a)}$ can be small as $T$ is small.\newline
\indent We now prove the scattering property of the global solution. Let us denote
\[
V(t) := \left[\begin{array}{c}
v(t) \\ 
\partial_t v(t)
\end{array}\right], \quad A := \left(\begin{array}{cc}
0 & 1 \\
-\Lambda^{2\sigma} & 0
\end{array}\right), \quad G(V(t)) := \left[\begin{array}{c}
0 \\
F(v(t))
\end{array}\right].
\]
The (NLFW) can be written as
\[
\partial_t V(t) - A V(t) = G(V(t)),
\]
or
\[
V(t)= e^{tA} V(0)+ \int_{0}^{t} e^{(t-s)A} G(V(s))ds,
\]
where 
\[
e^{tA}:= \left( \begin{array}{cc}
\cos t\Lambda^\sigma & \frac{\sin t\Lambda^\sigma}{\Lambda^\sigma} \\
-\Lambda^\sigma \sin t\Lambda^\sigma & \cos t\Lambda^\sigma
\end{array}\right).
\]
The adjoint estimates of $e^{\pm it\Lambda^\sigma}: L^a([t_1,t_2],L^a)  \rightarrow \dot{H}^{\gamma_{a,a}}$ with $\gamma_{a,a}=\sigma/2$ imply
\begin{align*}
\Big\| \int_{t_1}^{t_2} e^{\pm is\Lambda^\sigma} F(v(s)) ds \Big\|_{\dot{H}^{\gamma_{\text{w}}-\sigma}} &= \Big\| \int_{t_1}^{t_2} \Lambda^{-\frac{\sigma}{2}} e^{\pm is\Lambda^\sigma}  \Lambda^{\gamma_{\text{w}}-\frac{\sigma}{2}} F(v(s)) ds \Big\|_{L^2} \\
& \lesssim \|F(v)\|_{L^{a'}([t_1,t_2], \dot{H}^{\gamma_{\text{w}}-\frac{\sigma}{2}}_{a'})} \\
& \lesssim \|v\|^{\nu-1}_{L^p([t_1,t_2],L^p)} \|v\|_{L^a([t_1,t_2], \dot{H}^{\gamma_{\text{w}}-\frac{\sigma}{2}}_a)} \rightarrow 0
\end{align*}
as $t_1,t_2 \rightarrow + \infty$. This implies that
\begin{align}
\|[e^{-t_2A} V(t_2)-e^{-t_1A} V(t_1)]\|_{\dot{H}^{\gamma_{\text{w}}}}= \Big\| \Big[\int_{t_1}^{t_2} e^{-sA} G(V(s))ds \Big] \Big\|_{\dot{H}^{\gamma_{\text{w}}}} \rightarrow 0 \label{scattering estimate wave}
\end{align}
as $t_1,t_2 \rightarrow + \infty$. Therefore, the limit 
\[
V^+(0):= \lim_{t\rightarrow + \infty} e^{-tA} V(t)
\]
exists in $\dot{H}^{\gamma_{\text{w}}} \times \dot{H}^{\gamma_{\text{w}}-\sigma}$. We also have
\[
V(t)-e^{tA}V^+(0) = - \int_{t}^{+ \infty} e^{(t-s)A} G(V(s)) ds. 
\]
Using the unitary property of $e^{\pm it\Lambda^\sigma}$ in $L^2$ and $(\ref{scattering estimate wave})$, we have $\|[V(t)-e^{tA}V^+(0)]\|_{\dot{H}^{\gamma_{\text{w}}}} \rightarrow 0$ as $t \rightarrow + \infty$. This completes the proof of Item 1.\newline
\indent 2. We next consider the case $(\ref{assumption critical wave 2})$. The proof is similar as above, thus we only give the main steps. We will solve (NLFW) in 
\[
Y:= \Big\{v \in C(I,\dot{H}^{\gamma_{\text{w}}}) \cap C^1(I,\dot{H}^{\gamma_{\text{w}}-\sigma}) \cap L^p(I,L^p)\ | \  \|[v]\|_{L^\infty(I,\dot{H}^{\gamma_{\text{w}}})} \leq M, \|v\|_{L^p(I,L^p)} \leq N \Big\}, 
\]
equipped with the distance
\[
d(v,w):= \|[v-w]\|_{L^\infty(I,\dot{H}^{\gamma_{\text{w}}})} + \|v-w\|_{L^p(I,L^p)},
\]
where $p$ is as in Item 1. It is easy to check that under the assumption $(\ref{assumption critical wave 2})$, $(p,p)$ and $(b,b)$ are admissible and
\[
\gamma_{p,p}=\gamma_\text{w}=\gamma_{b',b'}+2\sigma,
\]
where $b'=p/\nu$. The Strichartz estimate $(\ref{usual inhomogeneous strichartz wave})$ implies
$\|v_{\text{hom}}\|_{L^p(I,L^p)} \lesssim \|[v](0)\|_{\dot{H}^{\gamma_{\text{w}}}}$. Therefore, $\|v_{\text{hom}}\|_{L^p(I,L^p)} \leq \varepsilon$ for some $\varepsilon>0$ small enough provided that $T$ is small or $\|[v](0)\|_{\dot{H}^{\gamma_{\text{w}}}}$ is small. Similarly, 
\[
\|v_{\text{inh}}\|_{L^p(I,L^p)} \lesssim \|F(v)\|_{L^{b'}(I,L^{b'})}\lesssim \|v\|^\nu_{L^p(I,L^p)},
\]
where the last inequality follows from the H\"older inequality with the fact that
\[
\frac{1}{b'} = \frac{1}{p} + \frac{\nu-1}{p}.
\]
We also have from $(\ref{usual inhomogeneous strichartz wave})$ that 
\begin{align}
\|F(v)-F(w)\|_{L^{b'}(I,L^{b'})}\lesssim \Big(\|v\|^{\nu-1}_{L^p(I,L^p)}+\|w\|^{\nu-1}_{L^p(I,L^p)} \Big) \|v-w\|_{L^p(I,L^p)}. \label{uniqueness critical wave 2}
\end{align}
This implies for all $v, w \in Y$, there exists $C>0$ independent of $(\varphi, \phi)\in \dot{H}^{\gamma_\text{w}} \times \dot{H}^{\gamma_\text{w}-\sigma}$ such that
\begin{align*}
\|\Psi(v)\|_{L^p(I,L^p)}  &\leq \varepsilon +CN^\nu, \\
\|[\Psi(v)]\|_{L^\infty(I, \dot{H}^{\gamma_{\text{w}}})} &\leq C\|[v](0)\|_{\dot{H}^{\gamma_{\text{w}}}} +CN^\nu, \\
d(\Psi(v),\Phi(w)) &\leq CN^{\nu-1} d(v,w).
\end{align*}
Now by setting $N=2\varepsilon$ and $M=2C\|[v](0)\|_{\dot{H}^{\gamma_{\text{w}}}}$ and choosing $\varepsilon>0$ small enough, we have the existence of solution $v \in Y$ to (NLFW). The uniqueness in $C(I, \dot{H}^{\gamma_{\text{w}}}) \cap C^1(I,\dot{H}^{\gamma_{\text{w}}-\sigma}) \cap L^p(I,L^p)$ follows as in Theorem $\ref{theorem local wellposedness subcritical schrodinger sigma <2}$ by using $(\ref{uniqueness critical wave 2})$. Here $\|v\|_{L^p(I,L^p)}$ can be small as $T$ is small.\newline
\indent Using the adjoint Strichartz estimates with the fact that $\gamma_{b,b}=-\gamma_{b',b'} -\sigma = -\gamma_{\text{w}}+\sigma$, we have
\begin{align*}
\Big\| \int_{t_1}^{t_2} e^{\pm is\Lambda^\sigma} F(v(s)) ds \Big\|_{\dot{H}^{\gamma_{\text{w}}-\sigma}} &= \Big\|\int_{t_1}^{t_2} \Lambda^{\gamma_{\text{w}}-\sigma} e^{\pm is\Lambda^\sigma} F(v(s)) ds\Big\|_{L^2} \\
&\lesssim \|F(v)\|_{L^{b'}([t_1,t_2],L^{b'})} \lesssim \|v\|^{\nu}_{L^p([t_1,t_2],L^p)} \rightarrow 0
\end{align*}
as $t_1,t_2 \rightarrow +\infty$. This implies
\begin{align}
\|[e^{-t_2A} V(t_2)-e^{-t_1A} V(t_1)]\|_{\dot{H}^{\gamma_{\text{w}}}}= \Big\| \Big[\int_{t_1}^{t_2} e^{-sA} G(V(s))ds \Big] \Big\|_{\dot{H}^{\gamma_{\text{w}}}} \rightarrow 0  \nonumber
\end{align}
as $t_1,t_2 \rightarrow + \infty$. The same argument as in Item 1 proves the scattering property for the global solution. The proof of Theorem $\ref{theorem local wellposedness critical scattering small data wave}$ is complete.
\defendproof 
\paragraph{Proof of Theorem $\ref{theorem local wellposedness wave H sigma critical}$.} The proof is similar the one of Theorem $\ref{theorem local wellposedness critical scattering small data wave}$. We thus give a sketch of the proof. We emphasize that here $\nu= 1+ 4\sigma/ (d-2\sigma)$ with $\sigma$ as in $(\ref{H sigma critical wave condition})$. We will solve (NLFW) in
\[
Y:= \Big\{ v \in C(I, \dot{H}^\sigma) \cap C^1(I,L^2) \cap L^\nu(I, L^{2\nu})\ | \ \|[v]\|_{L^\infty(I, \dot{H}^\sigma)} \leq M, \|v\|_{L^\nu(I, L^{2\nu})} \leq N \Big\}
\]
equipped with the distance
\[
d(v,w):= \|[v-w]\|_{L^\infty(I,\dot{H}^\sigma)}  + \|v-w\|_{L^\nu(I, L^{2\nu})},
\]
where $I=[0,T]$ and $M, N>0$ will be chosen later. It is easy to check that under the assumption $(\ref{H sigma critical wave condition})$, $(\nu, 2\nu)$ is admissible with $\gamma_{\nu,2\nu}=\sigma=\gamma_{1,2}+2\sigma$. The Strichartz estimate $(\ref{usual inhomogeneous strichartz wave})$ then implies $\|v_{\text{hom}}\|_{L^\nu(I, L^{2\nu})} \lesssim \|[v](0)\|_{\dot{H}^\sigma}$. Thus $\|v_{\text{hom}}\|_{L^\nu(I, L^{2\nu})}  \leq \varepsilon$ for some $\varepsilon>0$ small enough provided $T$ is small or $\|[v](0)\|_{\dot{H}^\sigma}$ is small. The Strichartz estimate $(\ref{usual inhomogeneous strichartz wave})$ also gives 
\[
\|v_{\text{inh}}\|_{L^\nu(I, L^{2\nu})} \lesssim \|F(v)\|_{L^1(I,L^2)}= \|v\|^\nu_{L^\nu(I,L^{2\nu})}.
\]
Similarly,
\begin{align}
\|F(v)-F(w)\|_{L^1(I,L^2)} \lesssim \Big(\|v\|^{\nu-1}_{L^\nu(I, L^{2\nu})}+ \|w\|^{\nu-1}_{L^\nu(I, L^{2\nu})} \Big) \|v-w\|_{L^\nu(I, L^{2\nu})}. \label{uniqueness critical H sigma wave}
\end{align}
Thus for all $v, w \in Y$, there exists $C>0$ independent of $(\varphi, \phi)\in \dot{H}^\sigma \times L^2$ such that
\begin{align*}
\|\Psi(v)\|_{L^\nu(I, L^{2\nu})}  &\leq \varepsilon +CN^\nu, \\
\|[\Psi(v)]\|_{L^\infty(I, \dot{H}^{\sigma})} &\leq C\|[v](0)\|_{\dot{H}^{\sigma}} +CN^\nu, \\
d(\Psi(v),\Phi(w)) &\leq CN^{\nu-1} d(v,w).
\end{align*}
Now by setting $N=2\varepsilon$ and $M=2C\|[v](0)\|_{\dot{H}^\sigma}$ and choosing $\varepsilon>0$ small enough, we have the existence of solution $v \in Y$ to (NLFW). The uniqueness in $C(I, \dot{H}^{\sigma}) \cap C^1(I,L^2) \cap L^\nu(I,L^{2\nu})$ follows as in Theorem $\ref{theorem local wellposedness subcritical schrodinger sigma <2}$ by using $(\ref{uniqueness critical H sigma wave})$. Here $\|v\|_{L^\nu(I,L^{2\nu})}$ can be small as $T$ is small.\newline
\indent The scattering property is very similar as in the proof of Theorem $\ref{theorem local wellposedness critical scattering small data wave}$. We have
\[
\Big\| \int_{t_1}^{t_2} e^{\pm is\Lambda^\sigma} F(v(s)) ds \Big\|_{L^2} \leq \|F(v)\|_{L^1([t_1,t_2],L^2)} = \|v\|^\nu_{L^\nu([t_1,t_2],L^{2\nu})} \rightarrow 0
\]
as $t_1,t_2 \rightarrow +\infty$. This implies
\begin{align}
\|[e^{-t_2A} V(t_2)-e^{-t_1A} V(t_1)]\|_{\dot{H}^{\sigma}} \rightarrow 0 \nonumber
\end{align}
as $t_1,t_2 \rightarrow + \infty$. This completes the proof.
\defendproof
\section*{Acknowledgments}
The author would like to express his deep gratitude to Prof. Jean-Marc BOUCLET for the kind guidance, encouragement and careful reading of the manuscript. 
\addcontentsline{toc}{section}{Acknowledments}

{\sc Institut de Math\'ematiques de Toulouse, Universit\'e Toulouse III Paul Sabatier, 31062 Toulouse Cedex 9, France.} \\
\indent Email: \href{mailto:dinhvan.duong@math.univ-toulouse.fr}{dinhvan.duong@math.univ-toulouse.fr}
\end{document}